\theoremstyle{plain}
\newtheorem{Thm}{Theorem}[section]
\newtheorem{Cor}[Thm]{Corollary}
\newtheorem{Lemma}[Thm]{Lemma}
\newtheorem{Prop}[Thm]{Proposition}
\theoremstyle{definition}
\newtheorem{Def}[Thm]{Definition}
\newtheorem{Not}[Thm]{Notation}
\newtheorem{Exl}[Thm]{Example}
\newtheorem{Rmk}[Thm]{Remark}
\numberwithin{equation}{section}
\newcommand{\A}{{\mathcal{A}}}
\newcommand{\B}{{\mathcal{B}}}
\newcommand{\C}{{\mathcal{C}}}
\newcommand{\G}{{\mathcal{G}}}
\newcommand{\J}{{\mathcal{J}}}
\newcommand{\K}{{\mathcal{K}}}
\renewcommand{\L}{{\mathcal{L}}}
\newcommand{\M}{{\mathcal{M}}}
\newcommand{\T}{{\mathcal{T}}}
\newcommand{\bN}{\mathbb{N}}
\newcommand{\bT}{\mathbb{T}}
\newcommand{\bZ}{\mathbb{Z}}
\newcommand{\bbK}{{\mathbb{K}}}
\newcommand{\bbN}{{\mathbb{N}}}
\newcommand{\bbT}{{\mathbb{T}}}
\newcommand{\bbZ}{{\mathbb{Z}}}
\newcommand{\Aut}{\operatorname{Aut}}
\newcommand{\alg}{\operatorname{alg}}
\newcommand{\id}{{\operatorname{id}}}
\newcommand{\Span}{\operatorname{span}}
\newcommand{\ca}{\mathrm{C}^*}
\newcommand{\ga}{{\gamma^A}}
\newcommand{\hga}{{\widehat{\gamma}^A}}
\begin{document}
	
	\title[Shift equivalences and Cuntz-Krieger algebras]{Shift equivalences through the lens of Cuntz-Krieger algebras}
	
	\author[T. M. Carlsen]{Toke Meier Carlsen}
	\address{Department of Mathematics\\ University of Faroe Islands \\ T\'orshavn \\ Faroe Islands.}
	\email{toke.carlsen@gmail.com}
	
	\author[A. Dor-On]{Adam Dor-On}
	\address{Mathematisches Institut \\ WWU M\"{u}nster \\ M\"{u}nster \\ Germany.}
	\email{adoron.math@gmail.com}
	
	\author[S. Eilers]{S{\o{}}ren Eilers}
	\address{Department of Mathematical Sciences\\ University of Copenhagen \\ Copenhagen \\ Denmark.}
	\email{eilers@math.ku.dk}

	\subjclass{Primary: 37A55, 46L55 Secondary: 37B10, 37A35, 46L08, 46L35, 54H20}
	\keywords{Shift equivalence, Williams' problem, Cuntz-Krieger algebras, Cuntz-Pimsner algebras, compatible shift equivalence, Pimsner dilations}
	
	\thanks{The first author was supported by the Research Council of the Faroe Islands. The second author was supported by NSF grant DMS-1900916 and the European Union's Horizon 2020 Marie Sk\l{}odowska-Curie grant 839412 - IRIOA. The third author was supported by the DFF-Research Project 2 `Automorphisms and Invariants of Operator Algebras', no. 7014-00145B. Finally, this project was partially funded by the Deutsche Forschungsgemeinschaft (DFG, German Research Foundation) under Germany’s Excellence Strategy – EXC 2044 – 390685587, Mathematics Münster – Dynamics – Geometry – Structure; the Deutsche Forschungsgemeinschaft (DFG, German Research Foundation) – Project-ID 427320536 – SFB 1442, and ERC Advanced Grant 834267 - AMAREC}

	\begin{abstract}
	
	Motivated by Williams' problem of measuring novel differences between shift equivalence (SE) and strong shift equivalence (SSE), we introduce three equivalence relations that provide new ways to obstruct SSE while merely assuming SE.
	
Our shift equivalence relations arise from studying graph C*-algebras, where a variety of intermediary equivalence relations naturally arise. As a consequence we realize a goal sought after by Muhly, Pask and Tomforde, measure a delicate difference between SSE and SE in terms of Pimsner dilations for C*-correspondences of adjacency matrices, and use this distinction to refute a proof from a previous paper.

	\end{abstract}
	
		
	\maketitle

	\renewcommand{\O}{{\mathcal{O}}}
	
\section{Introduction}

Initially recognized in the 40's as the right object to model quantum phenomena, C*-algebras are applied today in a variety of areas including theoretical physics, topology, differential geometry and dynamical systems. Such applications drive the impetus for obtaining structural and classification results for C*-algebras, especially in relation with Elliott's classification programme \cite{Ell93, WZ10, Win14, TWW17}. One fantastic application of C*-algebras in dynamics, using tools from classification of C*-algebras, is the classification of Cantor minimal systems up to orbit equivalence by their dimension groups \cite{GPS95, GMPS10}. Similar to this, our work here deals with subtle invariants arising from C*-algebras associated to subshifts of finite type (SFTs), with the aim of distinguishing SFTs up to conjugacy.
	
In a seminal 1973 paper \cite{Wil73}, Williams recast conjugacy and eventual conjugacy for SFTs purely in terms of equivalence relations between adjacency matrices of the directed graphs. These are called strong shift equivalence (SSE) and shift equivalence (SE) respectively. Williams expected SSE and SE to be the same \cite[Proposition 7.2]{Wil73}, but after around 25 years the last hope for a positive answer to Williams' problem, even under the most restrictive conditions, was extinguished by Kim and Roush \cite{KR99}. Although these counterexamples are concrete, aperiodic and irreducible $7 \times 7$ matrices, showing that they are not strong shift equivalent requires an invariant which is very difficult to compute. Thus, finding new obstructions to strong shift equivalence when two matrices are only assumed to be shift equivalent is an important endeavor, even just for $2 \times 2$ matrices (see \cite[Example 7.3.13]{LM95}).

\begin{Def}[\cite{Wil73}]
Let $A$ and $B$ be matrices indexed by sets $V$ and $W$ respectively, with (possibly infinite) cardinal entries. We say that $A$ and $B$ are 

\begin{enumerate}
\item \emph{shift equivalent} with lag $m \in \mathbb{N} \setminus \{0\}$ if there are a $V \times W$ matrix $R$ and a $W\times V$ matrix $S$ with cardinal entries such that
\begin{align*}
A^m = RS, \ \ B^m = SR, \\
SA = BS, \ \ AR = RB.
\end{align*}

\item \emph{elementary shift related} if they are shift equivalent with lag $1$.

\item \emph{strong shift equivalent} if they are equivalent in the transitive closure of elementary shift relation. 
\end{enumerate}
\end{Def}

In tandem with early attacks on Williams problem, Cuntz and Krieger \cite{CK80} created a bridgehead between symbolic dynamics and operator algebras, where several natural properties of subshifts of finite type are expressed through associated C*-algebras. In fact, by \cite[Proposition 2.17]{CK80} we know that strong shift equivalence of $A$ and $B$ implies that the Cuntz-Krieger C*-algebras $\mathcal{O}_A$ and $\mathcal{O}_B$ are stably isomorphic in a way preserving their gauge actions $\gamma^A$ and $\gamma^B$ and their diagonal subalgebras $\mathcal{D}_A$ and $\mathcal{D}_B$. On the other hand, by a theorem of Krieger \cite{Kri80} we know that the dimension group triples of SFTs are isomorphic if and only if the associated matrices are SE. Since these dimension group triples coincide with $K$-theoretical data of crossed products of Cuntz-Krieger C*-algebras by their gauge action, Krieger's theorem implies as a corollary (see Section \ref{s:final}) that if two Cuntz-Krieger algebras $\mathcal{O}_A$ and $\mathcal{O}_B$ are stably isomorphic in a way preserving their gauge actions $\gamma^A$ and $\gamma^B$, then their defining adjacency matrices $A$ and $B$ are shift equivalent. Through the lens of Cuntz-Krieger algebras, this provides several natural equivalence relations between SSE and SE, and it then becomes important to orient them and determine whether they coincide with SSE, SE or perhaps a completely new equivalence relation strictly between SSE and SE. Such distinctions may pave the way towards more concrete and computable invariants that distinguish SFTs up to conjugacy.

In this paper we introduce, study and orient three equivalence relations that provide new ways of measuring the difference between SSE and SE. Before we discuss these, let us first mention the state of the art.

A partial converse of the corollary to Krieger's theorem was obtained by Bratteli and Kishimoto \cite{BK00}, using deep machinery from C*-algebra K-theory classification, involving the essential concept of Rokhlin towers. More precisely, using their work it can be shown that if $A$ and $B$ are two \emph{aperiodic and irreducible} adjacency matrices then
\begin{gather} \label{eq:krieg-conv}
A \ \text{is SE to} \ B \Longleftrightarrow ({\mathcal O}_A\otimes\mathbb K,\gamma^A\otimes \operatorname{id})\simeq  ({\mathcal O}_B\otimes\mathbb K,\gamma^B\otimes \operatorname{id}).
\end{gather}
This \emph{converse to Krieger's corollary} for essential matrices remains a subtle and important classification problem in this line of research, and is one of the key motivating problems for our work here.

After a major undertaking pioneered by Matsumoto \cite{km:oetmscka,km:ucoemsgacka}, it is now known that several key concepts in symbolic dynamics may be fully understood in terms of operator algebraic descriptions. Indeed, both SSE as well as flow equivalence (FE) of subshifts of finite type  may be given an inherently operator algebraic characterization. Suppose $A$ and $B$ are finite adjacency matrices defining two-sided SFTs $(X_A,\sigma_A)$ and $(X_B,\sigma_B)$ respectively (see Section 2). When $A$ and $B$ are irreducible, Matsumoto and Matui \cite{MM14} established that
\begin{gather*}
(X_A,\sigma_A) \text{ is FE to } (X_B,\sigma_B) \ \Longleftrightarrow\\ ({\mathcal O}_A\otimes\mathbb K,\mathcal D_A\otimes c_0)\simeq  ({\mathcal O}_B\otimes\mathbb K,\mathcal D_B\otimes c_0)
\end{gather*}
where $\bbK$ is the C*-algebra of compact operators on $\ell^2(\mathbb{N})$ and $c_0$ is its subalgebra of diagonal operators. This was later extended to cover all two-sided SFTs by the first and third named authors with Ortega and Restorff \cite{CEOR19}.

For conjugacy, the first named author and Rout \cite{CR17} proved that $(X_A,\sigma_A)$  is conjugate to $(X_B,\sigma_B)$ precisely when the associated Cuntz-Krieger algebras ${\mathcal O}_A$ and ${\mathcal O}_A$ are stably isomorphic in a way preserving both the gauge actions and the diagonals (this time with no additional restrictions on $A$ and $B$). More precisely, and combining with Williams' characterization of conjugacy, we have 
\begin{gather} \label{eq:williams-conv}
A \ \text{is SSE to} \ B \Longleftrightarrow \\
 ({\mathcal O}_A\otimes\mathbb K,\gamma^A\otimes \operatorname{id},\mathcal D_A\otimes c_0)\simeq  ({\mathcal O}_B\otimes\mathbb K,\gamma^B\otimes \operatorname{id},\mathcal D_B\otimes c_0).\notag
 \end{gather}

The equivalences in equations \eqref{eq:krieg-conv} and \eqref{eq:williams-conv} provide a fresh perspective for symbolic dynamics via C*-algebras, and reinterprets Williams' problem via the counterexamples provided by Kim and Roush \cite{KR99,KRred}. More precisely, the examples of Kim and Roush show that two adjacency matrices can be shift equivalent and with flow equivalent two-sided SFTs without being strong shift equivalent. Hence, in terms of C*-algebras this shows it is not always possible to trade in two isomorphisms -- one respecting the diagonal, and one respecting the gauge action -- for one which respects both.

Another research agenda motivating this work is the \emph{graded isomorphism problem} of Hazrat from the theory of Leavitt path algebras, where one studies graded isomorphisms of pure algebras by means of graded K-theory. In work of Hazrat \cite{rh:ggclpa}, it was shown that the relevant graded K-theoretical data is in fact the same as Krieger's dimension group triple, and Hazrat conjectured that this invariant is complete when the class of the unit is added to the dimension triple as part of the invariant. Thus, the converse to Krieger's corollary is a topological analogue of Hazrat's conjecture. Although substantial advances have been made \cite{ALH+,AP14}, Hazrat's conjecture remains elusive.

For a $V \times W$ matrix $F=[F_{ij}]$ with cardinal entries, we denote
$$
E_F:= \{ \ (v,w,\alpha) \ | \ 0 \leq \alpha < F_{vw}, \ v\in V, \ w\in W \ \},
$$
so that $r(v,w,\alpha) = w$ and $s(v,w,\alpha) = v$, $\alpha$ is an ordinal, and $F_{vw}$ is interpreted as the least ordinal with cardinality $F_{vw}$. When $V=W$, this makes $G_F := (V,E_F)$ into a directed graph in its own right. For two matrices $C$ over $V \times W$ and $D$ over $W\times X$ with cardinal entries, we denote the \emph{fibered product}
$$
E_C \times E_D := \{ \ cd \ | \ c \in E_C, \ d\in E_D, \ r(c) = s(d) \ \}.
$$
Note here that we write $cd$ to mean the pair $(c,d)$ for which $r(c) = s(d)$, which should be thought of as concatenation of edges, even if there are no actual graphs. When $V=W$, for any $n\in \bN$ we denote $E_C^n$ the $n$-fold product of $E_C$ with itself, so that $E_C^n$ is naturally identified with $E_{C^n}$.

One might also consider $E_C \times E_D$ as a pullback of $W$, and indicate the role of $W$ in the notation. We refrain from doing so for purposed of readability.

The following definition was dubbed ``specified equivalence" by Nasu \cite{Nas95} in his study of shift equivalences between textile systems. 

\begin{Def} \label{d:path-isom}
Let $A$ and $B$ be matrices with cardinal entries over $V\times W$. A \emph{path isomorphism} is a bijection $\phi : E_A \rightarrow E_B$ such that $s(\phi(e)) = s(e)$ and $r(\phi(e)) = r(e)$ for every $e\in E_A$.
\end{Def}

Using path isomorphism, we define our first equivalence relation of \emph{compatible shift equivalence} (CSE) (see Definition \ref{d:cse}). The mere existence of path isomorphisms $\phi_R, \phi_S, \psi_A, \psi_B$ in Definition \ref{d:cse} implies shift equivalence, so that CSE is a direct strengthening of shift equivalence. In Theorem \ref{t:main} we show that CSE coincides with SSE, and this allows for a direct comparison between SSE and SE via CSE. Motivated by Williams' problem, another related notion called \emph{adapted shift equivalence} was studied by Parry \cite{Par81}, and was also shown to be equivalent to SSE. More precisely, instead of a requirement on compatibility of path isomorphisms between shift equivalent matrices, adpated shift equivalence is a shift equivalence of lag $m \in \mathbb{N} \setminus \{0\}$ between the adjacency matrices of the $m$-line graphs of $A$ and $B$.

It is known that shift equivalence is decidable by Kim and Roush \cite{KRdeci, KRdecii}, but the problem of decidability of strong shift equivalence remains a fundamental open problem in symbolic dynamics. In fact, this was the original motivation in Williams' paper \cite{Wil73}. The identification between path isomorphisms in the definition of CSE illustrates in what way the algorithm of Kim and Roush would need to improve were SSE turn out to be decidable. 

Our second equivalence relation is called \emph{representable shift equivalence} (RSE) (see Definition \ref{d:rse}). It arises naturally when one attempts to represent shift equivalence as bounded operators on Hilbert space. 

Surprisingly, by merely representing the relations of shift equivalence as bounded operator on Hilbert space, we get SSE (see Theorem \ref{t:main}). Considering the counterexamples of Kim and Roush, we see that if $A$ and $B$ are shift equivalent but not strong shift equivalent, then it follows that one of the four relations of shift equivalence must fail when representing everything on the same Hilbert space (see Section \ref{s:rse} for more details).

In what follows, we will say that a matrix is \emph{essential} if it has no zero rows and no zero columns. In the work of Pimsner \cite{Pim95}, Pimsner dilations were introduced and were subsequently studied by several authors \cite{MPT08, KK14, EKK17, Ery+}. Pimsner dilations offer a ``reversible" perspective for Cuntz-Pimsner C*-algebras, showing that they are always generated by an imprimitivity bimodule that contains the original C*-correspondence. In \cite[Remark 5.5]{MPT08} an equivalent formulation of strong shift equivalence in terms of Pimsner dilations was sought. This leads us to our third equivalence relation, which we call \emph{strong Morita shift equivalence} (SMSE) (see Definition \ref{d:smse}). Let $A$ and $B$ be finite essential matrices. Denote by $X(A)_{\infty}$ and $X(B)_{\infty}$ the Pimsner dilations of their graph C*-correspondences (see Section \ref{s:SE-CP}), and by $C^*(G_A)$ and $C^*(G_B)$ their Cuntz-Kriger graph C*-algebras. Our main theorem provides an equivalent formulation sought by Muhly, Pask and Tomforde, and orients CSE and RSE in one fell swoop (see Theorem \ref{t:main}). 

\begin{Thm}
Suppose $A$ and $B$ are two finite essential matrices with entries in $\mathbb{N}$. Then the following are equivalent,
\begin{enumerate}
\item $A$ and $B$ are strong shift equivalent.
\item $A$ and $B$ are compatible shift equivalent.
\item $A$ and $B$ are representable shift equivalent.
\item $X(A)_{\infty}$ and $X(B)_{\infty}$ are strong Morita shift equivalent.
\item $C^*(G_A)$ and $C^*(G_B)$ are equivariantly stably isomorphic in a way that respects the diagonals.
\end{enumerate}
\end{Thm}

For some time now, several experts have been perplexed about certain Pimsner dilation techniques (see for instance \cite[Remark 5.5]{MPT08}, the incorrect proof of \cite[Theorem 5.8]{KK14}, the subsequent corrigendum \cite{KK14}, 
and the recent revision of \cite{Ery+}), and many are still wondering whether they can be used to show that shift equivalence of $A$ and $B$ implies strong Morita equivalence of the Pimsner dilations $X(A)_{\infty}$ and $X(B)_{\infty}$ (in the sense of Abadie, Eilers and Exel \cite[Section 4]{AEE98}, or Muhly and Solel \cite{MS00}). The importance of this question is further elevated because of Theorem \ref{t:down-and-up} and the discussion succeeding it, where we show that a positive answer to it is \emph{equivalent} to the \emph{converse to Krieger's corollary}. Combining our main theorem with the celebrated counterexamples of Kim and Roush \cite{KR99}, as well as the work of Bratteli and Kishimoto \cite{BK00}, we obtain the following cutoff result. This result addresses the ambiguities mentioned above, and refutes the proof of \cite[Theorem 5.8]{KK14} (see Theorem \ref{t:cutoff}).

\begin{Thm}
There exist finite aperiodic $7 \times 7$ irreducible matrices $A$ and $B$ with entries in $\mathbb{N}$ such that $X(A)_{\infty}$ and $X(B)_{\infty}$ are strong Morita equivalent, but not strong Morita shift equivalent.
\end{Thm}

Since the validity of \cite[Theorem 5.8]{KK14} is in question, so is the validity of \cite[Corollary 5.11]{KK14}. This latter result states that shift equivalence of $A$ and $B$ implies the (not necessarily equivariant) stable isomorphisms of $C^*(G_A)$ and $C^*(G_B)$. However, thanks to Ara, Hazrat and Li \cite{ALH+} this result can still be recovered. Indeed, in work of the third author with Restorff, Ruiz and S\o rensen \cite{ERRS+} it was shown that filtered, ordered K-theory classifies unital graph C*-algebras up to stable isomorphism. By showing that filtered ordered K-theory is an invariant of shift equivalence, Ara, Hazrat and Li \cite{ALH+} ipso facto show that shift equivalence implies the stable isomorphisms of unital graph C*-algebras.

This paper contains 8 sections, including this introductory section. In the preliminaries Section \ref{s:prelim} we discuss some of the basic theory of directed graphs, subshifts of finite type, groupoid C*-algebra description of graph C*-algebras and Cuntz-Krieger C*-algebras. In Section \ref{s:SE-CP} we discuss some of the theory of C*-correspondences, Cuntz-Pimsner algebras, Pimsner dilations and equivariant isomorphisms. We provide there a characterization of the existence of an equivariant stable isomorphism between Cuntz-Pimsner algebras in terms of Pimsner dilations. In Section \ref{s:cse} we introduce CSE, show it is an equivalence relation, and that it is implied by SSE. In Section \ref{s:rse} we introduce RSE, show that it is implied by CSE, and upgrade representations to be faithful on associated graph C*-algebras. In Section \ref{s:smse} we explain how to concretely construct inductive limits related to Pimsner dilations, introduce SMSE, show that RSE implies SMSE and show that SMSE implies the existence of a stable equivariant diagonal-preserving isomorphism between graph C*-algebras. Finally in Section \ref{s:final} we discuss dimension triples for graph C*-algebras, orient different equivalence relations on adjacency matrices and prove the main results stated above. 

\section{Preliminaries} \label{s:prelim}

In this section we explain some of the basic theory of directed graphs, subshifts of finite type, groupoid C*-algebra descriptions of graph C*-algebras, Cuntz-Krieger algebras and dimension triples. We recommend \cite{LM95} for the basics of symbolic dynamics.

A directed graph $G=(V,E,r,s)$ is comprised of a vertex set $V$ and an edge set $E$ together with range and source maps $r,s : E \rightarrow V$. 

We say that a directed graph $G=(V,E)$ has \emph{finite out-degrees} if $s^{-1}(v)$ is finite for every $v\in V$. We say that $G$ is \emph{finite} if both $V$ and $E$ are finite. When $G$ has finite out-degrees and has no sources (i.e., $r$ is surjective) and no sinks (i.e., $s$ is surjective), we may define the \emph{two-sided edge shift} to be the pair $(X_E,\sigma_E)$ where $X_E$ is the set of bi-infinite paths
$$
X_E = \{ \ (e_n)_{n \in \mathbb{Z}} \ | \ s(e_{i+1}) = r(e_i) \ \} \subseteq \prod_{i\in \mathbb{Z}}E
$$
with the product topology, and $\sigma_E : X_E \rightarrow X_E$ is the left shift homeomorphism given by $\sigma_E((e_n)_{n \in \mathbb{Z}}) = (e_{n+1})_{n\in \mathbb{Z}}$. The \emph{one-sided edge shift} $(X_E^+, \sigma_E^+)$ is defined the same as above, by replacing every occurrence of $\mathbb{Z}$ with $\mathbb{N}$.

Given a directed graph $G=(V,E)$, we may always form its $V\times V$ adjacency matrix with cardinal entries given for $v,w\in V$ by
$$
A_E(v,w) = | \{ \ e\in E \ | \ s(e)=v, \ r(e) = w \ \}|.
$$
Conversely, we have seen that given a matrix $A$ indexed by $V$ with cardinal entries, one may form a directed graph $G_A=(V,E_A,r,s)$ where $E_A$ is the set of triples $(v,w,\alpha)$ such that $v,w\in V$, $0\leq \alpha < A_{vw}$ is an ordinal, $A_{vw}$ is interpreted as the least ordinal with cardinality $A_{vw}$, while $r(v,w,\alpha) = w$ and $s(v,w,\alpha) = v$. It is clear that $G_{A_E}$ and $G$ are isomorphic directed graphs and that $A_{E_A} = A$. 

The following shows that under countability / finiteness assumptions, shift equivalence with arbitrary cardinals becomes the standard notion we know from the literature.

\begin{Prop}
Let $A$ and $B$ be matrices with cardinal entries, indexed by sets $V$ and $W$. Suppose that $V$ and $W$ are countable / finite, and suppose that $A$ and $B$ are over $\mathbb{N} \cup \{\aleph_0 \}$ / over $\mathbb{N}$, respectively. If $A$ and $B$ are shift equivalent, then the matrices $R$ and $S$ realizing shift equivalence can be chosen to be with entries in $\mathbb{N} \cup \{\aleph_0 \}$ / $\mathbb{N}$, respectively.
\end{Prop}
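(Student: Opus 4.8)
The plan is to show that, given \emph{any} cardinal matrices $R,S$ realizing a shift equivalence of lag $m$, one can modify them entrywise so that all entries land in the prescribed set while keeping the four defining relations. The two \emph{product} relations $A^m=RS$ and $B^m=SR$ are the source of all entry bounds: since $V,W$ are countable (resp.\ finite) and $A,B$ take values in $\mathbb{N}\cup\{\aleph_0\}$ (resp.\ $\mathbb{N}$), the entries of $A^m$ and $B^m$ already lie in the prescribed set, and each identity $(RS)_{vv'}=\sum_{w}R_{vw}S_{wv'}$ forces every summand $R_{vw}S_{wv'}$ into it. In particular, whenever $R_{vw}$ is too large the entire row $w$ of $S$ must vanish, and symmetrically for $S$. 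The two \emph{intertwining} relations $AR=RB$ and $SB=AS$ carry no such bound, and this is where the work lies.

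For the countable case I would argue that the entrywise retraction $\rho\colon\kappa\mapsto\min(\kappa,\aleph_0)$ does the job. The key point is that $\rho$ is a semiring homomorphism from the cardinals onto $\mathbb{N}\cup\{\aleph_0\}$ which moreover commutes with the countable sums defining matrix multiplication: one checks $\rho(\kappa\lambda)=\rho(\kappa)\rho(\lambda)$ directly, and $\rho\big(\sum_i\kappa_i\big)=\sum_i\rho(\kappa_i)$ because a countable sum of cardinals is uncountable precisely when one of its summands is, while $\aleph_0$ is absorbing. Since $A$ and $B$ already take values in the image $\mathbb{N}\cup\{\aleph_0\}$, applying $\rho$ entrywise and pushing it through the products turns each of the four relations for $R,S$ into the same relation for $\bar R:=\rho(R)$ and $\bar S:=\rho(S)$; as these now have entries in $\mathbb{N}\cup\{\aleph_0\}$, the countable case is complete.

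The finite case cannot be treated by the same device, since there is no semiring retraction of the cardinals onto $\mathbb{N}$: a putative value $n_0=\rho(\aleph_0)$ would have to satisfy both $n_0=n_0^2$ and $n_0=n_0+n_0$, which no nonzero natural number does. I would instead first invoke the countable case to reduce to $R,S$ with entries in $\mathbb{N}\cup\{\aleph_0\}$, and then clear the finitely many $\aleph_0$ entries by hand. Passing to the doubled matrices $C=\left[\begin{smallmatrix}A&0\\0&B\end{smallmatrix}\right]$ and $D=\left[\begin{smallmatrix}0&R\\S&0\end{smallmatrix}\right]$, the relations become $D^2=C^m$ and $CD=DC$. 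The bound from $D^2=C^m$ confines every $\aleph_0$ entry $D_{ik}$ to a position for which both row $k$ and column $i$ of $D$ vanish; consequently the cross terms through such a position are zero, the value of an $\aleph_0$ entry does not affect $D^2$, and altering $D_{ik}$ perturbs only column $k$ of $CD$ and row $i$ of $DC$.

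The main obstacle is therefore the residual problem of reassigning finite values to these $\aleph_0$ positions so as to preserve $CD=DC$. These positions are \emph{linked}: an infinite summand on one side of an intertwining identity forces a matching infinite entry on the other, so the entries cannot be cleared one at a time and one must solve the resulting finite system on the whole block simultaneously. I expect this to be tractable because the block is supported on the non-essential (transient, sink/source-like) part of the graphs $G_A,G_B$; indeed, when $A$ and $B$ are essential one checks that no $\aleph_0$ entry can occur at all, since a vanishing row of $S$ would force a zero row of $B^m=SR$, contradicting the existence of length-$m$ paths out of every vertex (and symmetrically for columns and $A^m$). Thus the intertwining equations restricted to this block decouple from the essential part, and verifying that this decoupled finite system can be solved over $\mathbb{N}$ is the crux of the finite case.
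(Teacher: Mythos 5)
Your countable case is complete and correct: $\rho(\kappa)=\min(\kappa,\aleph_0)$ is indeed multiplicative and commutes with countable sums of cardinals, so applying it entrywise pushes all four defining relations through at once. This is a genuinely different device from the paper's, which in both cases simply deletes (i.e.\ zeroes out) the oversized entries of $R$ and $S$ and asserts that all four relations survive. The finite case, however, is not a proof: you reduce to clearing finitely many $\aleph_0$ entries, correctly observe that the intertwining relations couple those entries to the finite ones, and then leave the resulting system over $\mathbb{N}$ unsolved, explicitly calling its solvability ``the crux.'' That crux is real, and your suspicion that one cannot just truncate through $AR=RB$ is well founded: take $m=2$, $A=\left(\begin{smallmatrix}0&1\\0&0\end{smallmatrix}\right)$, $B=\left(\begin{smallmatrix}0&1&0\\0&0&0\\0&1&0\end{smallmatrix}\right)$, $S=0$ and $R=\left(\begin{smallmatrix}\aleph_0&0&5\\0&\aleph_0&0\end{smallmatrix}\right)$. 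All four relations hold (the $(1,2)$ entry of $AR=RB$ reads $\aleph_0=\aleph_0+5$), yet zeroing the $\aleph_0$'s gives $AR'=0$ while $R'B$ has $(1,2)$ entry $5$. So a nontrivial residual system really does have to be solved, and you have not shown that it always can be; as it stands the finite case is an announced plan, not an argument.

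The gap can be closed without solving that system at all, by letting the lag grow. Put $\widetilde R:=A^mR$ and $\widetilde S:=SA^m$. If $R_{v'w}$ is infinite then, exactly as in your entry-bound analysis, column $v'$ of $S$ vanishes (from $SR=B^m$ having entries in the prescribed set), hence column $v'$ of $A^m=RS$ vanishes, so every infinite entry of $R$ is multiplied by $0$ in forming $A^mR$; thus $\widetilde R$ has entries in $\mathbb{N}$ (resp.\ $\mathbb{N}\cup\{\aleph_0\}$), and symmetrically for $\widetilde S$. The relations for lag $3m$ are formal consequences of the originals: $\widetilde R\widetilde S=A^mRSA^m=A^{3m}$, $\widetilde S\widetilde R=SA^{2m}R=B^{2m}SR=B^{3m}$, $A\widetilde R=A^{m+1}R=A^m(RB)=\widetilde RB$, and $\widetilde SA=SA^{m+1}=(BS)A^m=B\widetilde S$. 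This one replacement disposes of your unresolved finite system and also subsumes the countable case.
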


\begin{proof}
Suppose now that $V$ and $W$ are countable / finite, and that $A$ and $B$ are with entries in $\mathbb{N} \cup \{\aleph_0 \}$ / $\mathbb{N}$ respectively. If $R$ and $S$ implement shift equivalence of lag $m$ between $A$ and $B$, denote by $R'$ and $S'$ the matrices obtained from $R$ and $S$ by replacing all non-countable / non-finite entries with zeros (respectively). As the matrices $A^m$ and $B^m$ are with entries in $\mathbb{N} \cup \{\aleph_0 \}$ / $\mathbb{N}$ (respectively), we still have that $R'S' = A^m$ and $S'R' = B^m$, as well as $S'A=BS'$ and $AR' = R'B$. Hence, $A$ and $B$ are shift equivalent via $R'$ and $S'$, and we are done.
\end{proof}

In this paper we will conduct our study through the lens of graph C*-algebras, which include the class of Cuntz-Krieger C*-algebras. We recommend \cite{FLR00, Rae05} and the references therein for more on graph C*-algebras. We will sometime assume that our graphs have finite out-degree, which is often called ``row-finiteness" in the literature.

\begin{Def}
Let $G=(V,E,r,s)$ be a directed graph. A family of operators $(S_v,S_e)_{v\in V, e\in E}$ on a Hilbert space $\mathcal{H}$ is called a \emph{Cuntz-Krieger family} if
\begin{enumerate}
\item $(S_v)_{v\in V}$ is a family of pairwise orthogonal projections;

\item $S_e^*S_e = S_{r(e)}$ for all $e\in E$, and;

\item $\sum_{e\in s^{-1}(v)} S_eS_e^* = S_v$ for all $v\in V$ with $0 < |s^{-1}(v)| < \infty$.
\end{enumerate}
The \emph{graph C*-algebra} $C^*(G)$ of $G$ is the universal C*-algebra generated by a Cuntz-Krieger families.
\end{Def} 

Universality of $C^*(G)$ gives rise to a point-norm continuous \emph{gauge action} of the unit circle $\gamma : \mathbb{T} \rightarrow \Aut(C^*(G))$ given by
$$
\gamma_z(S_v) = S_v, \text{ and } \gamma_z(S_e) = z\cdot S_e, \text{ for} \ z\in \mathbb{T},\ v\in V,\ e\in E.
$$
With this gauge action, $C^*(G)$ becomes a $\mathbb{Z}$-graded C*-algebra whose graded components are $C^*(G)_n:= \{ \ \ T \in C^*(G) \ \ | \ \ \gamma_z(T) = z^n \cdot T \ \ \}$.

We will sometime assume that our graphs have finite out-degree, which is often called ``row-finiteness" in the literature. We shall need the groupoid C*-algebra description of $C^*(G)$, as specified in \cite{CR17}. Indeed, if $G=(V,E)$ is a directed graph with finite out-degree, no sources and no sinks, we may construct the locally compact, Hausdorff etale groupoid
$$
\G_E:= \{ \ (x,m-n,y) \in X_E^+ \times \mathbb{Z} \times X_E^+ \ | \ x,y \in X_E, \ m,n \in \bbN, \ \sigma_E^m(x) = \sigma_E^n(y) \ \}
$$
with product $(x,k,y)(w,\ell, z) = (x, k+\ell, z)$ if $y=w$ (otherwise undefined), and inverse $(x,k,y)^{-1} = (y,-k,x)$. The topology on $\G_E$ is generated by subsets of the form
$$
Z(U,m,n,V) = \{ \ (x,m-n,y) \in \G_E \ | \ x\in U, \ y\in V, \ \}
$$
where $m,n \in \mathbb{N}$ and $U, V \subseteq X_E^+$ clopen such that $\sigma_E^m|_U$ is injective, $\sigma_E^n|_V$ is injective, and $\sigma_E^m(U) = \sigma_E^n(V)$. The map $x \mapsto (x,0,x)$ then provides a homeomorphism from $X_E^+$ into the unit space $\G_E^0$ of $\G_E$. It is well-known that $C^*(G) \cong C^*(\G_E)$ is the groupoid C*-algebra of $\G_E$, and that $ \mathcal{D}_E:= C_0(\G_E^{(0)}) \cong C_0(X_E^+)$ is the subalgebra of continuous functions on units of $\G_E$. This subalgebra is often called the \emph{diagonal subalgebra} of $C^*(G)$, and is given by
$$
\mathcal{D}_E = \overline{\Span}\{ \ S_{\mu} S_{\mu}^* \ | \  \mu \in E^n, \ n \in \mathbb{N} \ \}.
$$

In what follows, recall that $\bbK$ denotes compact operators on $\ell^2(\mathbb{N})$, which contains a natural copy of diagonal compact operators $c_0 \subseteq \bbK$. We will consider $\gamma \otimes \id_{\bbK}$ as the standard gauge action on the stabilization $C^*(G) \otimes \bbK$. It was shown in \cite[Theorem 5.1]{CR17} that for any two finite graphs without sources and sinks $G=(V,E)$ and $G'=(V',E')$ we have that $(X_E,\sigma_E)$ and $(X_{E'},\sigma_{E'})$ are conjugate if and only if there is an equivariant isomorphism $\varphi: C^*(G) \otimes \bbK \rightarrow C^*(G') \otimes \bbK$ such that $\varphi(\mathcal{D}_E \otimes c_0) = \mathcal{D}_{E'} \otimes c_0$.

Suppose that $A=(A_{vw})_{v,w=0}^{n-1}$ is an essential $n\times n$ matrix with non-negative integer entries. The \emph{Cuntz--Krieger algebra} of $A$ is the universal C*-algebra $\O_A$ generated by a family $\{S_{(v,w,m)}\mid v,w,m\in\bbN,\ 0\le v,w < n,\ 0 \le m < A_{vw} \}$ of partial isometries satisfying
\begin{enumerate}
	\item $S_{(v,w,m)}^*S_{(v,w,m)}=\sum_{u=0}^{n-1}\sum_{\ell=0}^{A_{wu}-1}S_{(w,u,\ell)}S_{(w,u,\ell)}^*$,
	\item $\sum^{n-1}_{v,w=0}\sum_{m=0}^{A_{vw}-1} S_{(v,w,m)}S_{(v,w,m)}^*=1$.
\end{enumerate}
For each family as above, we get projections $S_w = S_{(v,w,m)}^*S_{(v,w,m)}$ (independent of $v$ and $m$), so that the family $(S_v, S_{(v',w',m)})$ becomes a Cuntz-Krieger family for the directed graph $G_A$ (see \cite[Remark 2.16]{CK80} and \cite[Remark 2.8]{Rae05}). Hence, there is a $*$-isomorphism between $\O_A$ and $C^*(G_A)$ which maps the generators $S_{(v,w,m)}$ of $\O_A$ to edge generators for $C^*(G_A)$. This isomorphism between $\O_A$ and $C^*(G_A)$ induces the usual gauge action on $\O_A$ from \cite{CK80} (which is also built from universality of $\O_A$, see \cite[Remark 2.2(2)]{HR97}) and sends the natural diagonal subalgebra of $\O_A$ to the diagonal subalgebra $\mathcal{D}_E$ inside $C^*(G_A)$. Thus, there is no loss of generality arising from considering graph C*-algebras instead of Cuntz-Kriger C*-algebras.

\begin{Not}
Whenever $X, Y \subseteq \mathcal{L}(E)$ are norm-closed subspaces, we denote by $XY$ or by $X\cdot Y$ the closed linear span of products $x\cdot y$ with $x \in X$ and $y\in Y$.
\end{Not}

\section{Shift equivalence and Cuntz-Pimsner algebras} \label{s:SE-CP}

In this section we discuss C*-correspondences of adjacency matrices, Cuntz-Pimsner C*-algebras and Pimsner dilations. The main result of this section is a characterization of equivariant stable isomorphism of Cuntz-Pimsner algebras in terms of Pimsner dilations.

We will need some of the theory of C*-correspondences. We mention some of the basic definitions, but will assume some familiarity with the theory of Hilbert C*-modules as in \cite{Lan95,MT05}.
	
\begin{Def}\label{d: correspondence} A \textit{C*-correspondence} from $\A$ to $\B$ (or $\B-\A$ C*-corres\-pondence) is a right Hilbert $\A$-module $E$ with a *-representation $\phi_E: \B \to \L(E)$, where $\L(E)$ is the C*-algebra of adjointable operators on $E$. We denote by $\K(E)$ the ideal of $\L(E)$ which is closed linear span of rank one operators $\theta_{\xi,\eta}$ given by $\theta_{\xi,\eta}(\zeta) = \xi \langle \eta,\zeta \rangle$.

We will say that two $\B-\A$ correspondences $E$ and $F$ are \emph{unitarily isomorphic} (and denote this $E \cong F$) if there is an isometric surjection $U :E \rightarrow F$ such that for every $a\in \A$, $b\in \B$ and $\xi \in E$ we have
$$
U(\phi_E(b) \xi a) = \phi_F(b) U(\xi) a.
$$
\end{Def}

 We will assume throughout this note that C*-correspondences $E$ are non-degenerate (sometimes called essential) in the sense that $\phi_E(\B) E = E$. We say that a $\B-\A$ C*-corres\-pondence $E$ is \emph{regular} if its left action $\phi_E$ is injective and $\phi_E(\B) \subseteq \K(E)$. When the context is clear, we write $b \xi$ to mean $\phi_E(b) \xi$ for $b\in \B$ and $\xi \in E$. Finally, we say that a $\B-\A$ C*-correspondence $E$ is \emph{full} if $\A$ is equal to the ideal $\langle E, E \rangle$ defined as the closed linear span of $\langle \xi,\eta \rangle$ for $\xi,\eta \in E$.

The most important examples of C*-correspondences in our study are the ones coming from adjacency matrices. Let $V$ and $W$ be sets, and let $C$ be a $V \times W$ matrix so that $C_{vw}$ is some cardinal. We denote by 
$$
E_C: = \{ \ (v,w,\alpha) \ | \ 0\leq \alpha < C_{vw}, \ v\in V, \ w\in W \ \}.
$$

When $C$ is a $V \times W$ matrix with cardinal entries, we may construct a $c_0(V)-c_0(W)$ correspondence $X(C)$ by taking the Hausdorff completion of all finitely supported functions on $E_C$ with respect to the inner product
$$
\langle \xi, \eta \rangle(w) = \sum_{(v,w,\alpha)\in E_C} \overline{\xi(v,w,\alpha)} \eta(v,w,\alpha),
$$
where $\xi$ and $\eta$ are finitely supported on $E_C$. The left actions of $c_0(V)$ and the right action of $c_0(W)$ on $X(C)$ are then given by 
$$(f\cdot \xi \cdot g)(v,w,\alpha) = f(v)\xi(v,w,\alpha)g(w)\text{ for }f\in c_0(V) \text{ and }g\in c_0(W).$$ 
When $V=W$, it is clear that $X(C)$ coincides with the graph C*-cor\-re\-spon\-den\-ce $X(G_C)$ of the directed $G_C=(V,E_C)$ as is explained in the discussion preceding \cite[Theorem 6.2]{DEG2020}, with range and source interchanged in the definitions of inner product and bimodule actions.

We will need to know that in the above way we obtain all $c_0(V)-c_0(W)$ correspondences. This type of result was first shown by Kaliszewski, Patani and Quigg \cite{KPQ12} when $C$ is a square matrix indexed by a countable set and with countable entries.

\begin{Prop} \label{p:adjacency-corresp}
Let $V$ and $W$ be sets, and $E$ a $c_0(V)-c_0(W)$ correspon\-dence. Then there exists a unique $V\times W$ matrix $C$ with cardinal entries such that $E$ is unitarily isomorphic to $X(C)$.
\end{Prop}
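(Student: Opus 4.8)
The plan is to reconstruct $C$ directly from the spatial decomposition of $E$ induced by the minimal projections of $c_0(V)$ and $c_0(W)$, and then to build the unitary from a choice of orthonormal bases. For $v\in V$ and $w\in W$ write $\delta_v\in c_0(V)$ and $\delta_w\in c_0(W)$ for the indicator functions; these are minimal projections, the finite sums $\sum_{v\in F}\delta_v$ and $\sum_{w\in F}\delta_w$ (over finite $F$) are contractive approximate units, and the finitely supported functions they span are dense. Setting $Q_v=\phi_E(\delta_v)\in\L(E)$ and letting $P_w\in\L(E)$ be the (adjointable) projection given by right multiplication by $\delta_w$, non-degeneracy of $E$ together with the Hilbert-module identity $E=\ol{E\cdot c_0(W)}$ shows that $\sum_v Q_v\to\id$ and $\sum_w P_w\to\id$ strictly. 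Since adjointable operators are right-linear, $Q_v$ and $P_w$ commute, so $E=\bigoplus_{v,w}E_{v,w}$ as an orthogonal direct sum, where $E_{v,w}:=Q_vEP_w=\phi_E(\delta_v)\,E\,\delta_w$ (this matches the structure of $X(C)$, whose inner product is $c_0(W)$-valued).

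The key observation is that each corner $E_{v,w}$ is an honest Hilbert space: its inner products lie in $\delta_w c_0(W)\delta_w=\bbC\delta_w\cong\bbC$, and distinct corners are mutually orthogonal because $\langle Q_vEP_w, Q_{v'}EP_{w'}\rangle\subseteq \delta_w\langle E,\phi_E(\delta_v\delta_{v'})E\rangle\delta_{w'}$, which vanishes unless $v=v'$ and $w=w'$. I would then define $C_{vw}$ to be the Hilbert-space dimension of $E_{v,w}$, a well-defined cardinal, and fix an orthonormal basis $\{\xi^{v,w}_n: 0\le n<C_{vw}\}$ of each $E_{v,w}$, so that $\langle\xi^{v,w}_n,\xi^{v,w}_m\rangle=\delta_{nm}\delta_w$. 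Recall that the standard generators $e_{(v,w,n)}$ of $X(C)$ satisfy exactly $\langle e_{(v,w,n)},e_{(v,w,m)}\rangle=\delta_{nm}\delta_w$, $\phi_{X(C)}(\delta_{v'})e_{(v,w,n)}=\delta_{v,v'}e_{(v,w,n)}$ and $e_{(v,w,n)}\cdot\delta_{w'}=\delta_{w,w'}e_{(v,w,n)}$.

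Next I would define $U\colon X(C)\to E$ on generators by $U(e_{(v,w,n)})=\xi^{v,w}_n$. By the matching inner-product relations, $U$ preserves inner products on the dense subspace of finitely supported functions, hence extends to an isometry of the completions; its range contains every $\xi^{v,w}_n$, whose closed span is $\bigoplus_{v,w}E_{v,w}=E$, so $U$ is surjective. Comparing the displayed formulas shows $U$ intertwines the actions of each $\delta_v$ and each $\delta_w$; by linearity and continuity (the representations of $c_0(V)$ and $c_0(W)$ being contractive and the $\delta_v,\delta_w$ spanning dense subalgebras) it intertwines the full left and right actions, so $U$ is a unitary isomorphism $X(C)\cong E$.

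For uniqueness, any unitary isomorphism $X(C)\to X(C')$ restricts, by intertwining both actions and preserving inner products, to a Hilbert-space isomorphism $\phi(\delta_v)X(C)\delta_w\to\phi(\delta_v)X(C')\delta_w$; since the dimension of $\phi(\delta_v)X(C)\delta_w$ equals $C_{vw}$ (the $e_{(v,w,n)}$ with $0\le n<C_{vw}$ form an orthonormal basis), this forces $C_{vw}=C'_{vw}$ for all $v,w$. The only genuine obstacle is the non-unitality of $c_0(V)$ and $c_0(W)$ when $V,W$ are infinite: one must justify the strict convergence $\sum_v Q_v\to\id$ and $\sum_w P_w\to\id$ and the resulting orthogonal direct-sum decomposition, and confirm that the Hilbert-space dimension is a unitary invariant taking a well-defined cardinal value. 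Everything else is the bookkeeping of matching the two sets of structure relations.
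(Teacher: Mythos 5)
Your proposal is correct and follows essentially the same route as the paper's proof: both define $C_{vw}$ as the Hilbert-space dimension of the corner $\phi_E(\delta_v)E\delta_w$, build the unitary by sending the standard generators of $X(C)$ to a chosen orthonormal basis of each corner, and deduce uniqueness from the unitary invariance of these dimensions. The paper is merely terser, taking for granted the orthogonal decomposition $E=\bigoplus_{v,w}\phi_E(\delta_v)E\delta_w$ and the density of the spanning vectors that you spell out explicitly.
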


\begin{proof}
Let $v\in V$ and $w\in W$, and let $p_v \in c_0(V)$ and $p_w \in c_0(W)$ be the characteristic functions of $\{v\}$ and $\{w\}$ respectively. Since $p_v E p_w$ is a Hilbert space, we let $C_{vw}$ be its dimension, and let $\{e_{(v,w,\alpha)} \}_{0 \leq \alpha< C_{vw}}$ be an orthonormal basis for it, indexed by ordinals $0 \leq \alpha < C_{vw}$. Then clearly $C$ is a $V \times W$ matrix with cardinal values. We define a map $U : X(C) \rightarrow E$ on finitely supported functions by setting $U(\xi) = \sum_{(v,w,\alpha)\in E_C} \xi(v,w,\alpha)e_{(v,w,\alpha)}$ for finitely supported $\xi \in X(C)$. Now, for a finitely supported function $\xi \in X(C)$ we have for fixed $w\in W$ that
$$
|U(\xi)|^2(w) = \sum_{(v,w,\alpha)\in E_C} |\xi(v,w,\alpha)|^2 = | \xi|^2(w),
$$
Hence, $U$ extends to an isometry on $X(C)$, and since the linear span of $\{ \ e_{(v,w,\alpha)} \ | \ v\in V, \ w\in W \ 0 \leq \alpha < C_{vw} \ \}$ is dense in $E$, we get that $U$ is a unitary isomorphism.

As for uniqueness, suppose $F$ is another $c_0(W)-c_0(V)$-correspondence which is unitarily isomorphic to $E$ via a unitary $U$. Hence, if $C^E$ and $C^F$ are the matrices associated to $E$ and $F$ via the first paragraph, we must have $C^F_{vw} = C^E_{vw}$, and we are done.
\end{proof}

For a $\B-\A$ correspondence $E$ and a $\C-\B$ correspondence $F$, we can form the \emph{interior tensor product} $\C - \A$  correspondence $F\otimes_{\B}E$ of $E$ and $F$ as follows. Let $F\otimes_{\alg} E$ denote the quotient of the algebraic tensor product, by the subspace generated by elements of the form:
	$$
	\eta b \otimes \xi - \eta \otimes b\xi, \ \ \text{for} \ \  \xi \in E, \eta \in F, b\in \B.
	$$
	Define an $\A$-valued semi-inner product and left $\C$-action by setting:
	$$ 
	\langle \eta_1\otimes \xi_1, \eta_2\otimes \xi_2\rangle = \langle \xi_1, \langle \eta_1,\eta_2\rangle \cdot \xi_2\rangle, \ \text{for} \ \ \xi_1,\xi_2\in E, \ \eta_1, \eta_2\in F
	$$
	$$
	c \cdot(\eta \otimes \xi) = (c \cdot \eta )\otimes \xi, \ \ \text{for} \ \ \eta \in E, \ \xi \in F, \ c\in \C.
	$$
We denote by $F\otimes_{\B} E$ the separated completion of $F\otimes_{\alg} E$ with respect to the $\A$-valued semi-inner product defined above. One then verifies that $F\otimes_{\B} E$ is a $\C-\A$ correspondence (see for example \cite[Proposition 4.5]{Lan95}). We will often abuse notation and write $F \otimes E$ for $F \otimes_{\B} E$ when the context is clear. If $E$ is an $\A-\A$ correspondence (a C*-correspondence over $\A$), then we denote by $E^{\otimes n}$ the $n$-fold interior tensor product of $E$ with itself.

\begin{Prop} \label{p:multip-adjacency}
Let $I,J,K$ be sets, and let $C$ be an $I\times J$ matrix and $D$ be a $J \times K$ matrix, both with cardinal entries. Then there is a unitary isomorphism $U:X(C)\otimes X(D) \rightarrow X(CD)$.
\end{Prop}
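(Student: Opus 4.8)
The plan is to build $U$ explicitly from an orthonormal family and verify it is an isometric bimodule map, though the computation can also be packaged through Proposition \ref{p:adjacency-corresp}. Write $I,J,K$ for the three index sets. Reading off the formulas, $X(C)$ is a right Hilbert $c_0(J)$-module with left $c_0(I)$-action and $X(D)$ is a right Hilbert $c_0(K)$-module with left $c_0(J)$-action, so the balanced product $X(C)\otimes_{c_0(J)}X(D)$ is a right Hilbert $c_0(K)$-module with left $c_0(I)$-action, which is exactly the type of correspondence carried by $X(CD)$. For $c\in E_C$ and $d\in E_D$ let $e_c$ and $e_d$ denote the corresponding point masses. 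Since for each $(i,k)$ the number of composable pairs through $i$ and $k$ is $\sum_{j\in J}C_{ij}D_{jk}=(CD)_{ik}$ (a sum of cardinals), the composable product $E_C\times E_D=\{cd : r(c)=s(d)\}$ is in bijection with $E_{CD}$; fixing such a bijection $(c,d)\mapsto cd$, I would set $U(e_c\otimes e_d)=e_{cd}$ for composable pairs and extend linearly.

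First I would record how the balancing relation acts: taking $b=p_{r(c)}\in c_0(J)$ in $\eta b\otimes\xi=\eta\otimes b\xi$ gives $e_c\otimes e_d=(e_c\,p_{r(c)})\otimes e_d=e_c\otimes(p_{r(c)}\,e_d)$, and $p_{r(c)}\,e_d$ equals $e_d$ when $s(d)=r(c)$ and vanishes otherwise. Hence non-composable simple tensors are already $0$, and the composable tensors span a dense subspace. Next I would compute directly from the definition of the $c_0(K)$-valued inner product on the tensor product that
\[
\langle e_{c_1}\otimes e_{d_1},\, e_{c_2}\otimes e_{d_2}\rangle
=\langle e_{d_1},\, \langle e_{c_1},e_{c_2}\rangle\, e_{d_2}\rangle
=\delta_{c_1,c_2}\,\delta_{d_1,d_2}\,p_{r(d_1)},
\]
using $\langle e_{c_1},e_{c_2}\rangle=\delta_{c_1,c_2}\,p_{r(c_1)}$ together with the cancellation $p_{r(c_1)}\,e_{d_2}=e_{d_2}$, valid precisely for composable tensors. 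Since the point masses of $X(CD)$ satisfy $\langle e_{cd},e_{c'd'}\rangle=\delta_{cd,c'd'}\,p_{r(d)}$ and $r(cd)=r(d)$, the map $U$ sends one orthonormal (in the Hilbert-module sense) family onto another, so it is inner-product preserving on a dense subspace and extends to an isometry of the completions; it is surjective because the $e_{cd}$ span $X(CD)$. It remains to check that $U$ intertwines the actions, which follows from $f\cdot(e_c\otimes e_d)=f(s(c))\,e_c\otimes e_d$ and $(e_c\otimes e_d)\cdot g=g(r(d))\,e_c\otimes e_d$ for $f\in c_0(I)$, $g\in c_0(K)$, matching $f\cdot e_{cd}\cdot g=f(s(cd))\,g(r(cd))\,e_{cd}$.

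The one delicate point is the interior-tensor-product bookkeeping: because the tensor-product inner product is only $c_0(K)$-valued after passing through the middle $c_0(J)$-inner product, the identity that actually has to be verified is the displayed one, and one must confirm that separation and completion collapse exactly the non-composable tensors, so that $U$ is genuinely defined on the balanced module and not merely on the algebraic tensor product. All entries are arbitrary cardinals, but every step uses only the cardinal sum $\sum_{j}C_{ij}D_{jk}$, so no finiteness enters. As a shorter alternative, I would note that $X(C)\otimes_{c_0(J)}X(D)$ is a non-degenerate correspondence of the type treated in Proposition \ref{p:adjacency-corresp}, apply it to obtain a unique $I\times K$ matrix $M$ with $X(C)\otimes X(D)\cong X(M)$, and then identify $M_{ik}=\dim p_i\big(X(C)\otimes X(D)\big)p_k=\sum_{j}C_{ij}D_{jk}=(CD)_{ik}$ from the orthonormal corner basis above, so that $M=CD$.
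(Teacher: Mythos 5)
Your proposal is correct, and your ``shorter alternative'' at the end is essentially the paper's own proof: the paper observes that both $X(C)\otimes X(D)$ and $X(CD)$ are correspondences of the type classified by Proposition \ref{p:adjacency-corresp}, so by the uniqueness part of that proposition it suffices to check that each corner Hilbert space $p_i\big(X(C)\otimes X(D)\big)p_k$ has dimension $(CD)_{ik}$, which it does by exhibiting the orthonormal basis $\{e_x\otimes e_y\}$ indexed by composable pairs --- exactly your cardinality count $\sum_{j}C_{ij}D_{jk}$. Your primary argument is genuinely different in emphasis: instead of invoking uniqueness, you write down the unitary $e_c\otimes e_d\mapsto e_{cd}$ explicitly and verify by hand that it is well defined on the balanced tensor product, inner-product preserving, surjective, and a bimodule map. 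The computations you record (the balancing relation annihilating non-composable simple tensors, the $c_0(K)$-valued inner-product identity, and the intertwining of the $c_0(I)$- and $c_0(K)$-actions) are all correct and match the paper's conventions for the interior tensor product. The one point worth making explicit is that the bijection $(c,d)\mapsto cd$ must be chosen blockwise, sending composable pairs with $s(c)=i$ and $r(d)=k$ into the $(i,k)$-block of $E_{CD}$, so that sources and ranges are preserved; this is implicit in your entrywise identification $\sum_j C_{ij}D_{jk}=(CD)_{ik}$ but is what makes the intertwining check go through. Your explicit route buys a concrete formula for $U$ (useful when one wants to track path isomorphisms rather than the mere existence of a unitary), at the cost of the bookkeeping about descending to the separated completion; the paper's route is shorter precisely because Proposition \ref{p:adjacency-corresp} has already absorbed that bookkeeping.
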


\begin{proof}
For each $i \in I$, $j\in J$ and $k\in K$, let $C_{ij} = |X_{ij}|$, $D_{jk} = |Y_{jk}|$ for some sets $X_{ij}$ and $Y_{jk}$. Then $(CD)_{ik}$ is the cardinality of the disjoint union of Cartesian products $\sqcup_{j\in J} X_{ij}\times Y_{jk}$.

Now, clearly both $X(C)\otimes X(D)$ and $X(CD)$ are $c_0(I)-c_0(K)$ correspondences, so that by the uniqueness part of Proposition \ref{p:adjacency-corresp} it will suffice to show for every $i\in I$ and $k\in K$ that the dimension of the Hilbert space $p_i X(C)\otimes X(D)p_k$ is equal to $(CD)_{ik}$.

So let $\{e_x\}_{x \in X_{ij}}$ be an orthonormal basis for $p_i X(C)p_j$ for $i\in I$ and $j\in J$ and let $\{e_y \}_{y \in Y_{j'k}}$ be an orthonormal basis for $p_{j'}X(D)p_k$ for $j'\in J$ and $k\in K$. Then, for $x\in X_{ij}$ and $y\in Y_{j'k}$ we have $e_x \otimes e_y \neq 0$ if and only if $j=j'$. Hence, an orthonormal basis for $p_i X(C)\otimes X(D)p_k$ is given by $\{ \ e_x \otimes e_y \ | \ x \in X_{ij}, \ y \in Y_{jk}, \ j\in J \ \}$. The cardinality of this basis is clearly equal to $(CD)_{ik}$, so the proof is concluded.
\end{proof}

The following definition of shift equivalence of C*-correspondences first appeared in \cite{KK14}.

\begin{Def} \label{d:sme-se}
Let $E$ and $F$ be C*-correspondences over $\A$ and $\B$ respectively. We say that $E$ and $F$ are 
\begin{enumerate}
\item \emph{shift equivalent} with lag $m$ if there are $m\in \bN \setminus \{0 \}$, an $\A-\B$ correspondence $R$ and a $\B-\A$ -correspondence $S$, together with unitary isomorphisms
\begin{equation*}
E^{\otimes m} \cong R \otimes S, \ \ F^{\otimes m} \cong S \otimes R,
\end{equation*}
\begin{equation*}
S \otimes E \cong F \otimes S, \ \ E \otimes R \cong R \otimes F.
\end{equation*}

\item \emph{elementary shift related} if they are shift equivalent with lag $1$
\item \emph{strong shift equivalent} if they are equivalent in the transitive closure of the elementary shift relation.

\end{enumerate}
\end{Def}

The following shows that shift equivalence between two C*-corres\-pondences generalizes shift equi\-valence of adjacency matrices. 

\begin{Prop} \label{p:shift-equiv-generalizes}
Let $A$ and $B$ be matrices with cardinal entries, indexed by sets $V$ and $W$. Then, $A$ and $B$ are shift equivalent if and only if $X(A)$ and $X(B)$ are shift equivalent.
\end{Prop}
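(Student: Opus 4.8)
The plan is to treat Propositions~\ref{p:adjacency-corresp} and~\ref{p:multip-adjacency} as a dictionary converting matrix products into interior tensor products and back. Writing $\A = c_0(V)$ and $\B = c_0(W)$, recall that $X(A)$ is a correspondence over $\A$, that $X(B)$ is a correspondence over $\B$, and that for a $V\times W$ matrix $R$ and a $W\times V$ matrix $S$ the correspondences $X(R)$ and $X(S)$ are of exactly the two bimodule types demanded of the intertwining correspondences in Definition~\ref{d:sme-se}. Iterating Proposition~\ref{p:multip-adjacency} also yields $X(A)^{\otimes m}\cong X(A^m)$ and $X(B)^{\otimes m}\cong X(B^m)$, which I would use freely. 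The whole proof then consists of matching each of the four matrix relations with one of the four unitary isomorphisms.

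For the forward implication, suppose $A$ and $B$ are shift equivalent with lag $m$ via $R$ and $S$. I would feed the matrix identities through the dictionary: from $A^m=RS$ and $B^m=SR$ I get $X(A)^{\otimes m}\cong X(A^m)=X(RS)\cong X(R)\otimes X(S)$ and $X(B)^{\otimes m}\cong X(S)\otimes X(R)$; from $AR=RB$ I get $X(A)\otimes X(R)\cong X(AR)=X(RB)\cong X(R)\otimes X(B)$; and from the remaining relation $SA=BS$ I get $X(S)\otimes X(A)\cong X(SA)=X(BS)\cong X(B)\otimes X(S)$. These are precisely the four isomorphisms of Definition~\ref{d:sme-se}, so $X(A)$ and $X(B)$ are shift equivalent with intertwiners $X(R)$ and $X(S)$.

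For the converse, suppose $X(A)$ and $X(B)$ are shift equivalent with lag $m$ via correspondences $\mathcal{R}$ and $\mathcal{S}$. Since these are of the appropriate bimodule type over the fixed pair $(\A,\B)$, Proposition~\ref{p:adjacency-corresp} supplies \emph{unique} cardinal matrices $R$ (a $V\times W$ matrix) and $S$ (a $W\times V$ matrix) with $\mathcal{R}\cong X(R)$ and $\mathcal{S}\cong X(S)$. Using Proposition~\ref{p:multip-adjacency} on both sides, each defining isomorphism rewrites as an isomorphism $X(P)\cong X(Q)$ between single adjacency correspondences over one and the same coefficient pair; for instance $X(A)^{\otimes m}\cong\mathcal{R}\otimes\mathcal{S}$ becomes $X(A^m)\cong X(RS)$, and $\mathcal{S}\otimes X(A)\cong X(B)\otimes\mathcal{S}$ becomes $X(SA)\cong X(BS)$. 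By the uniqueness clause of Proposition~\ref{p:adjacency-corresp}, $X(P)\cong X(Q)$ forces $P=Q$, so these four isomorphisms deliver exactly $A^m=RS$, $B^m=SR$, $AR=RB$, and $SA=BS$. Hence $A$ and $B$ are shift equivalent via $R$ and $S$.

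The argument is almost entirely formal once Propositions~\ref{p:adjacency-corresp} and~\ref{p:multip-adjacency} are available, so the only place I expect to spend real attention is the bookkeeping of the left and right coefficient algebras. I must check that every interior tensor product is contracted over the correct middle algebra so that Proposition~\ref{p:multip-adjacency} applies, and that the order of the tensor factors in Definition~\ref{d:sme-se} is matched to the order of matrix multiplication — so that $\mathcal{S}\otimes X(A)$ corresponds to $X(SA)$ rather than to an undefined product $X(AS)$. Getting these conventions straight is precisely what guarantees that the uniqueness in Proposition~\ref{p:adjacency-corresp} is being invoked for correspondences over the single fixed pair $(\A,\B)$, which is what licenses passing from $X(P)\cong X(Q)$ to $P=Q$.
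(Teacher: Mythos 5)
Your proof is correct and follows essentially the same route as the paper's: the forward direction is Proposition~\ref{p:multip-adjacency} applied to each of the four matrix relations, and the converse uses the existence and uniqueness clauses of Proposition~\ref{p:adjacency-corresp} to pull the intertwining correspondences back to matrices and convert unitary isomorphisms $X(P)\cong X(Q)$ into equalities $P=Q$. Your added bookkeeping of coefficient algebras and tensor-factor order (including reading the intertwining relation as $SA=BS$) just makes explicit what the paper's brief proof leaves implicit.
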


\begin{proof}
By Proposition \ref{p:multip-adjacency} we see that if $A$ and $B$ are shift equivalent with lag $m$ via matrices $C$ and $D$, then the C*-correspondences $X(A)$ and $X(B)$ are shift equivalent with lag $m$ via the C*-correspondences $X(C)$ and $X(D)$.

Conversely, suppose $R$ and $S$ are $c_0(V)-c_0(W)$ and $c_0(W)-c_0(V)$ correspondences implementing shift equivalence of lag $m$ for $X(A)$ and $X(B)$. By Proposition \ref{p:adjacency-corresp} there are a $V\times W$ matrix $C$ and $W\times V$ matrix $D$ with unitary isomorphisms $R \cong X(C)$ and $S \cong X(D)$. Hence, the uniqueness portion in Proposition \ref{p:adjacency-corresp} guarantees that $C$ and $D$ implement a shift equivalence of $A$ and $B$ with lag $m$.
\end{proof}

Every path isomorphism $\phi : E_A \rightarrow E_B$ induces a unitary isomorphism $\Phi : X(A) \rightarrow X(B)$ by setting $\Phi(\xi)(v,w,\alpha) = \xi(\phi^{-1}(v,w,\alpha))$. However, the converse is in general false; it is easy to construct a unitary isomorphism $U: X(A) \rightarrow X(B)$ for which there is no path isomorphism $\phi : E_A \rightarrow E_B$ such that $U$ is the induced unitary isomorphism from $\phi$. The point of Proposition \ref{p:shift-equiv-generalizes} is that we only need to find \emph{some} path isomorphisms for each one of the four relations appearing in the definition of shift equivlaence, and not necessarily path isomorphisms which induce \emph{the same} four isomorphisms we started with at the level of C*-correspondences.

\begin{Rmk}
From considerations similar to the above we see that strong shift equivalence of matrices implies the strong shift equivalence of their associated C*-correspondences. The converse, however, is unknown.
\end{Rmk}

Next we discuss Cuntz-Pimsner algebras and Pimsner dilations.
More material on Cuntz-Pimsner algebras, with a special emphasis on C*-corres\-pondences of graphs, can be found in \cite[Chapter 8]{Rae05}. We note immediately that what we call a \emph{rigged} representation here is often referred to as an \emph{isometric} representation in the literature (see \cite{MS00}).
	
	\begin{Def}
		Let $E$ be a C*-correspondence from $\A$ to $\B$, and $\C$ some C*-algebra. A \emph{rigged representation} of $E$ is a triple $(\pi_{\mathcal{A}},\pi_{\mathcal{B}}, t)$ such that $\pi_{\mathcal{A}} : \A \rightarrow \C$ and $\pi_{\mathcal{B}} : \B \rightarrow \C$ are *-homomorphisms and $t : E \rightarrow \C$ is a linear map such that
		\begin{enumerate}
			\item
			$\pi_{\mathcal{B}}(b)t(\xi) \pi_{\mathcal{A}}(a) = t(b \cdot \xi \cdot a)$ for $a\in \A$, $b\in \B$ and $\xi \in E$.
			\item $t(\xi)^*t(\eta) = \pi_{\mathcal{A}}(\langle \xi,\eta \rangle)$
		\end{enumerate}
		We say that $(\pi_{\mathcal{A}},\pi_{\mathcal{B}}, t)$ is \textit{injective} if both $\pi_{\mathcal{A}}$ and $\pi_{\mathcal{B}}$ are injective *-homo\-morphisms.
		We denote by $C^*(\pi_{\mathcal{A}},\pi_{\mathcal{B}},t)$ the C*-algebra generated by the images of $\pi_{\mathcal{A}}$,$\pi_{\mathcal{B}}$ and $t$.
	\end{Def}
	
We will mostly be concerned with the situation where $\A=\B$ and $\pi_{\mathcal{A}} = \pi_{\mathcal{B}}$, in which case we will denote $\pi:=\pi_{\mathcal{A}} = \pi_{\mathcal{B}}$, and refer to the representation as a pair $(\pi,t)$, and the generated C*-algebra by $C^*(\pi,t)$.  

Now let $E$ be a C*-correspondence over $\A$. The Toeplitz-Pimsner algebra $\T(E)$ is then the universal C*-algebra generated by a rigged representation of $E$. Universality of $\T(E)$ implies that it comes equipped with a point-norm continuous gauge action $\gamma: \bT \to \Aut(\T(E))$ given by
	$$
	\gamma_z(\pi(a))= \pi(a), \ \text{ and } \ \  \gamma_z(t(\xi))=z \cdot t(\xi), \ \ \text{for} \ z\in \bT, \ \xi\in E, \ a\in \A.
	$$

	Toeplitz-Pimsner algebras have a canonical quotient, also known as the Cuntz-Pimsner algebra, which was originally defined by Pimsner in \cite{Pim95} and refined by Katsura in \cite{Kat04b}.
	
	\begin{Def}\label{d:Katsura-ideal}
		For a C*-correspondence $E$ over $\A$, we define \emph{Katsura's ideal} $J_E$ in $\A$ by
		$$J_E\coloneqq  \{a\in \A: \phi_E(a)\in \K(E) \text{ and } ab=0 \text{ for all } b\in \ker \phi_E \}.$$
	\end{Def}
	
	For a rigged representation $(\pi,t)$ of a C*-correspondence $E$ over $\A$, it is known there is a well-defined *-homomorphism $\psi_t: \K(E)\to C^*(\pi,t)$ given by $\psi_t(\theta_{\xi,\eta})=t(\xi)t(\eta)^*$ for $\xi,\eta\in E$ (see for instance \cite[Lemma 2.2]{KPW98}).
	
	\begin{Def}\label{d:cov-rep}
		A rigged representation $(\pi,t)$ is said to be \textit{covariant} if $\pi(a)=\psi_t(\phi_E(a))$, for all $a\in J_E$.
	\end{Def}
	
	The Cuntz-Pimsner algebra $\O(E)$ is then the universal C*-algebra generated by a \textit{covariant} representation of $E$. Suppose now that $(\pi,t)$ and $(\overline{\pi},\overline{t})$ are universal rigged and covariant representations respectively, so that $\T(E) = C^*(\pi,t)$ and $\O(E) = C^*(\overline{\pi},\overline{t})$. We denote by $\J_E$ the kernel of the natural quotient map from $\T(E)$ onto $\O(E)$, which is the ideal generated by elements of the form $\pi(a) - \psi_t(\phi_E(a))$ for $a\in J_E$. Since the ideal $\J_E$ is gauge invariant, we see there is an induced point-norm continuous circle action $\gamma : \bT \rightarrow \Aut(\O(E))$ given by
	$$
	\gamma_z(\pi(a))= \pi(a), \ \text{ and } \ \  \gamma_z(t(\xi))=z \cdot t(\xi), \ \ \text{for} \ z\in \bT, \ \xi\in E, \ a\in \A.
	$$
	
It follows that $\O(E)$ then becomes a $\mathbb{Z}$-graded C*-algebra, with its $n$-th graded component given by $\O(E)_n:= \{ \ c \in \O(E) \ | \ \gamma_z(c) = z^n \cdot c \ \}$. By \cite[Theorem 3]{Rae18} we see that there is a bijective correspondence between topologically $\mathbb{Z}$-graded $C^*$-algebras, with graded *-homomorphisms and $C^*$-algebras equipped with a circle action, together with equivariant *-homomorphisms. In particular, an isomorphism $\varphi : \O(E) \rightarrow \O(F)$ between two Cuntz-Pimsner algebras is equivariant if and only if it is graded. 

\begin{Exl}
When $G=(V,E)$ is a directed graph we know that the graph C*-algebra $C^*(G)$ coincides with the Cuntz-Pimsner algebra of the correspondence $X(A_E)$ by an isomorphism that intertwines the gauge action of $C^*(G)$ and the gauge action of $\O(X(A_E))$. See \cite[Section 8]{Rae05} for more details.
\end{Exl}

\begin{Def} \label{d:isomorphisms}
		Let $E, F$ be C*-correspondences over $\A$ and $\B$, respectively. Denote by $\gamma^E$ and $\gamma^F$ the gauge actions on $\O(E)$ and $\O(F)$ respectively. We say that $\O(E)$ and $\O(F)$ are \emph{equivariantly stably isomorphic} if there is a $*$-isomorphism $\varphi : \O(E) \otimes \bbK \rightarrow \O(F) \otimes \bbK$ such that $\varphi \circ (\gamma^E_z \otimes \id) = (\gamma^F_z \otimes \id) \circ \varphi$ for every $z\in \bT$.
	\end{Def}
	
Equivariant stable isomorphisms arise naturally in the classification of groupoid C*-algebras, but always in a way which respects diagonal subalgebras. For instance in \cite{CRST17}, equivariant stable isomorphisms which respect diagonal subalgebras are characterized in terms of isomorphisms of graded groupoids. We will get back to such isomorphisms in Section \ref{s:smse}.

The following definition is similar to Definition \ref{d: correspondence}, and we will provide a precise distinction between the two in the remark that follows.

\begin{Def} \label{d:unitary-iso-dif-alg}
Suppose $E$ and $F$ are C*-correspondences over C*-algeb\-ras $\A$ and $\B$ respectively. We say that $E$ and $F$ are \textit{unitarily isomorphic} (denoted $E\cong F$) if there exist a surjective, isometric map $U: E\to F$ and a *-isomorphism $\rho: \A\to \B$, s.t. $U(b \cdot \xi\cdot a)=\rho(b)\cdot U(\xi)\cdot \rho(a)$ for all $a,b\in \A, \xi \in E$.
\end{Def}

\begin{Rmk}
Suppose now that $E$ and $F$ are C*-correspondences over C*-algebras $\A$ and $\B$ respectively, and that $U : E \rightarrow F$ is a unitary isomorphism implemented by a *-isomorphism $\rho : \A \rightarrow \B$. By the discussion in \cite[Subsection 2.1]{DO18} we may ``twist" the C*-correspondence $F$ to a C*-correspondence $F_{\rho}$ over $\A$ so that $U : E \rightarrow F_{\rho}$ becomes a unitary isomorphism as in Definition \ref{d: correspondence}. More precisely, the new operations on $F_{\rho}$ are given by $ {\langle \xi,\eta \rangle}_{\rho} \coloneqq \rho^{-1}({\langle \xi,\eta \rangle}_\B), \text { for } \xi,\eta\in F; \ a \cdot \xi= \rho(a)\cdot \xi; \text{ and } \xi\cdot a \coloneqq \xi \cdot \rho(a), \text{ for all } \xi\in F \text{ and } a\in \A$, and the identity map $\id_F^{\rho} :F \to F_{\rho}$ becomes a unitary isomorphism as in Definition \ref{d:unitary-iso-dif-alg}. Then, the isometric surjection $\id_F^{\rho} \circ U$ is a unitary isomorphism as in Definition \ref{d: correspondence}. Hence, we can go back and forth between the two definitions of unitary isomorphism. 

We warn the reader that unitary isomorphism as in Definition \ref{d:unitary-iso-dif-alg} is not the same as having an isometric surjection $U$ implemented via two potentially \emph{different} *-isomorphisms $\rho_1 : \A \rightarrow \B$ and $\rho_2 : \A \rightarrow \B$ in the sense that $U(b \cdot \xi \cdot a) = \rho_1(a) U(\xi) \rho_2(b)$ for $a,b\in \A$. We also note that whenever one of our C*-correspondences has possibly different left and right coefficient C*-algebras, we only consider one notion of unitary isomorphism, which is the one in Definition \ref{d: correspondence}.
\end{Rmk}

In what follows, we say that $M$ is an \emph{imprimitivity} $\A-\B$ correspondence (from $\B$ to $\A$) if it is full and its left action $\phi_M$ is a $*$-isomorphism onto $\K(E)$. The following was introduced by Muhly and Solel \cite{MS00} in their study of tensor algebras of C*-correspondences.

\begin{Def}
Let $E$ and $F$ be C*-correspondences over C*-algebras $\A$ and $\B$ respectively. We say that $E$ and $F$ are \emph{strongly Morita equivalent} if there are an imprimitivity $\A-\B$ bimodule $M$ and an isometric surjective linear map $ U: E\otimes M \rightarrow M \otimes F$ such that for every $\xi \in E \otimes M$, $a\in \A$ and $b\in \B$ we have $U(a\xi b) = aU(\xi) b$.
\end{Def}

When $M$ is an imprimitivity $\A-\B$-bimodule, there is an ``inverse" imprimitivity $\B-\A$-bimodule $M^*$ satisfying $M \otimes M^* \cong \A$ and $M^* \otimes M \cong \B$. If moreover $\A = \B$, it follows from \cite[Theorem 2.9]{AEE98} that the $\bZ$-graded components of $\O(M)$ are given for $n\in \mathbb{N}$ by
\begin{itemize}
\item $\O(M)_n \cong M^{\otimes n}$ for $n> 0$,
\item $\O(M)_0 \cong \A$, and
\item $\O(M)_n \cong (M^{\otimes n})^*$ for $n < 0$.
\end{itemize}

\begin{Prop} \label{P:stable-equivariant}
Let $E$ and $F$ be C*-correspondences over C*-algebras $\A$ and $\B$ respectively. If $E$ and $F$ are unitarily equivalent, then $\O(E)$ and $\O(F)$ are equivariantly isomorphic. If moreover $E$ and $F$ are imprimitivity bimodules, then $E$ and $F$ are unitarily equivalent if and only if $\O(E)$ and $\O(F)$ are equivariantly isomorphic.
\end{Prop}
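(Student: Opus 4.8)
```latex
The plan is to prove the two implications separately. First I would establish the general direction: if $E$ and $F$ are unitarily equivalent, then $\O(E)$ and $\O(F)$ are equivariantly isomorphic. By the Remark following Definition \ref{d:unitary-iso-dif-alg}, a unitary isomorphism in the sense of Definition \ref{d:unitary-iso-dif-alg} (implemented by a $*$-isomorphism $\rho : \A \rightarrow \B$) can be transported to a unitary isomorphism $E \cong F_\rho$ in the sense of Definition \ref{d: correspondence}, where $F_\rho$ is the twist of $F$ back over $\A$. Since $\rho$ itself induces an equivariant isomorphism $\O(F_\rho) \cong \O(F)$ (the twisting is just relabeling the coefficient algebra through $\rho$, which intertwines the gauge actions because it fixes the degree-zero part and scales the degree-one generators identically), it suffices to show that a unitary isomorphism $U : E \rightarrow F_\rho$ over the \emph{same} algebra $\A$ induces an equivariant isomorphism of Cuntz-Pimsner algebras. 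For this I would check that $U$ respects the Cuntz-Pimsner structure: it carries rank-one operators to rank-one operators and hence identifies $\K(E)$ with $\K(F_\rho)$ compatibly with the left actions $\phi_E$ and $\phi_{F_\rho}$, so it sends Katsura's ideal $J_E$ to $J_{F_\rho}$ and intertwines covariance. Therefore a universal covariant representation of $F_\rho$ pulls back along $U$ to a covariant representation of $E$, and by universality in both directions we obtain mutually inverse $*$-homomorphisms. These are automatically equivariant since $U$ preserves the grading (it fixes $\A$ and maps $E$, the degree-one part, to the degree-one part $F_\rho$), so by the correspondence between graded algebras and circle actions cited from \cite{Rae18}, the induced isomorphism intertwines the gauge actions.

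For the converse direction I would restrict to the case where $E$ and $F$ are imprimitivity bimodules and show that an equivariant isomorphism $\O(E) \cong \O(F)$ forces $E \cong F$. The key input is the structure of the graded components recorded just before the statement: when $M$ is an imprimitivity $\A-\A$-bimodule, $\O(M)_0 \cong \A$, $\O(M)_1 \cong M$, and $\O(M)_{-1} \cong M^*$, by \cite[Theorem 2.9]{AEE98}. I would argue that an equivariant isomorphism $\Psi : \O(E) \rightarrow \O(F)$ must preserve the grading (by \cite{Rae18}, equivariant is equivalent to graded), hence restricts to isomorphisms on each graded component. In particular $\Psi$ restricts to a $*$-isomorphism $\rho : \O(E)_0 \rightarrow \O(F)_0$, that is $\rho : \A \rightarrow \B$, and to an isometric surjective linear map on the degree-one components $\Psi|_1 : E \cong \O(E)_1 \rightarrow \O(F)_1 \cong F$.

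The main obstacle, and the step requiring the most care, is verifying that the pair $(\rho, \Psi|_1)$ assembles into an honest unitary isomorphism of correspondences in the sense of Definition \ref{d:unitary-iso-dif-alg}, i.e.\ that $\Psi|_1$ is compatible with the bimodule actions and the inner products via the single $*$-isomorphism $\rho$. Here one must exploit that $\Psi$ is an algebra homomorphism: the right action $\xi \cdot a$ corresponds to multiplication $\O(E)_1 \cdot \O(E)_0$ inside $\O(E)$, the left action $b \cdot \xi$ corresponds to $\O(E)_0 \cdot \O(E)_1$, and the $\A$-valued inner product $\langle \xi, \eta \rangle$ corresponds to the product $\xi^* \eta \in \O(E)_0$ (using that for an imprimitivity bimodule the covariant representation is injective and implements these identifications faithfully). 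Since $\Psi$ is multiplicative and $*$-preserving, it carries each of these products to the corresponding product in $\O(F)$, which translates precisely into the relations $\Psi|_1(b \cdot \xi \cdot a) = \rho(b) \cdot \Psi|_1(\xi) \cdot \rho(a)$ and $\langle \Psi|_1(\xi), \Psi|_1(\eta) \rangle = \rho(\langle \xi, \eta \rangle)$. The subtlety is ensuring the \emph{same} $\rho$ governs both left and right actions and the inner product — which is exactly where the warning in the Remark about two different $*$-isomorphisms becomes relevant — and this is forced because all three identifications are restrictions of the single homomorphism $\Psi$ to the algebra $\O(E)_0$. This yields a unitary isomorphism $E \cong F$, completing the equivalence.
```
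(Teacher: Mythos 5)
Your proposal is correct and follows essentially the same route as the paper: the converse direction (restricting a graded isomorphism to the degree-zero and degree-one components and using multiplicativity to recover the bimodule structure and inner product via the single $*$-isomorphism $\rho$) is exactly the paper's argument. For the forward direction the paper simply cites \cite[Corollary 3.5 and Theorem 3.1]{DEG2020} and gauge-invariance of the ideals $\J_E$, $\J_F$, whereas you spell out the standard direct argument (twisting by $\rho$, matching Katsura's ideals, and invoking universality of covariant representations); this is the content of the cited result and is sound.
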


\begin{proof}
Let $(\overline{\pi},\overline{t})$ be a universal covariant representation for $\O(E)$. Assume that $E$ and $F$ are unitarily equivalent. By the implication $(1) \implies (4)$ of \cite[Corollary 3.5]{DEG2020} we get that $\T(E)$ and $\T(F)$ are graded isomorphic, and hence equivariantly isomorphic. Then, \cite[Theorem 3.1]{DEG2020} gives rise to an induced isomorphism between $\O(E)$ and $\O(F)$, which is actually equivariant since the ideals $\J_E$ and $\J_F$ are gauge invariant.

Conversely, if $E$ and $F$ are imprimitivity bimodules, and $\varphi : \O(E) \rightarrow \O(F)$ is an equivariant isomorphism. Then $\varphi$ is $\mathbb{Z}$-graded, and we get that the restriction $U:= \varphi|_{\O(E)_1} : \O(E)_1 \rightarrow \O(F)_1$ is an isometric $\O(E)_0 - \O(F)_0$ bimodule isomorphism, implemented by the $*$-isomorphism $\rho:= \varphi|_{\O(E)_0} : \O(E)_0 \rightarrow \O(F)_0$. From the identifications in the discussion preceding the theorem, we get that $E$ and $F$ are unitarily isomorphic. 
\end{proof}

Given a C*-correspondence $E$ over $\A$, we may form the external minimal tensor product $E \otimes \mathbb{K}$, which is a C*-correspondence over $\A \otimes \mathbb{K}$ as defined in \cite[p. 34]{Lan95}. A consequence of \cite[Proposition 2.10]{DEG2020} is that $\O(E\otimes \mathbb{K})$ is canonically isomorphic to $\O(E)\otimes \mathbb{K}$ via a map induced by the representation $(\overline{\pi}\otimes \id, \overline{t} \otimes \id)$.

\begin{Cor} \label{C:equivariant-stable-iso}
Let $E$ and $F$ be C*-correspondences over C*-algebras $\A$ and $\B$ respectively. If $E\otimes \bbK$ and $F\otimes \bbK$ are unitarily equivalent, then $\O(E)$ and $\O(F)$ are equivariantly stably isomorphic. If moreover $E$ and $F$ are \emph{imprimitivity bimodules}, then $E\otimes \bbK$ and $F\otimes \bbK$ are unitarily equivalent if and only if $\O(E)$ and $\O(F)$ are equivariantly stably isomorphic.
\end{Cor}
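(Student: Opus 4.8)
The plan is to reduce both assertions to Proposition \ref{P:stable-equivariant} applied to the stabilized correspondences $E\otimes \bbK$ and $F\otimes \bbK$, combined with the canonical identification $\O(E\otimes \bbK)\cong \O(E)\otimes \bbK$ recorded just before the statement (and similarly for $F$). The only genuine content beyond invoking the proposition will be checking that this canonical identification is \emph{equivariant}, and that the imprimitivity hypothesis is inherited by $E\otimes \bbK$.

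For the first implication I would assume that $E\otimes\bbK$ and $F\otimes\bbK$ are unitarily equivalent as C*-correspondences over $\A\otimes\bbK$ and $\B\otimes\bbK$. Since Proposition \ref{P:stable-equivariant} is stated for correspondences over possibly different coefficient algebras, its first half immediately gives an equivariant isomorphism $\O(E\otimes\bbK)\cong \O(F\otimes\bbK)$. I would then compose this with the canonical isomorphisms $\O(E)\otimes\bbK\cong\O(E\otimes\bbK)$ and $\O(F\otimes\bbK)\cong\O(F)\otimes\bbK$ to obtain a $*$-isomorphism $\O(E)\otimes\bbK\to\O(F)\otimes\bbK$. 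The point to verify is that each canonical isomorphism intertwines $\gamma^{E}\otimes\id$ with $\gamma^{E\otimes\bbK}$: this holds because the identification is induced by $(\overline{\pi}\otimes\id,\overline{t}\otimes\id)$, under which a generator $\overline{t}(\xi)\otimes k$ is scaled by $z$ under $\gamma^{E}_z\otimes\id$ exactly as $\overline{t}(\xi\otimes k)$ is scaled by $z$ under $\gamma^{E\otimes\bbK}_z$. Granting this, the composite intertwines $\gamma^{E}\otimes\id$ with $\gamma^{F}\otimes\id$, which is precisely equivariant stable isomorphism of $\O(E)$ and $\O(F)$.

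For the converse, under the additional hypothesis that $E$ and $F$ are imprimitivity bimodules, I would first record that $E\otimes\bbK$ is again an imprimitivity bimodule over $\A\otimes\bbK$: it is full since $\langle E\otimes\bbK, E\otimes\bbK\rangle = \langle E,E\rangle\otimes\bbK = \A\otimes\bbK$, and its left action $\phi_E\otimes\id$ is a $*$-isomorphism onto $\K(E)\otimes\bbK = \K(E\otimes\bbK)$ because $\phi_E$ is a $*$-isomorphism onto $\K(E)$; likewise for $F\otimes\bbK$. Now if $\O(E)$ and $\O(F)$ are equivariantly stably isomorphic, then $\O(E)\otimes\bbK$ and $\O(F)\otimes\bbK$ are equivariantly isomorphic, and transporting through the same canonical equivariant identifications yields an equivariant isomorphism $\O(E\otimes\bbK)\cong\O(F\otimes\bbK)$. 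Applying the ``moreover'' half of Proposition \ref{P:stable-equivariant} to the imprimitivity bimodules $E\otimes\bbK$ and $F\otimes\bbK$ then forces them to be unitarily equivalent, closing the equivalence.

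The main obstacle is not the algebra, which is all bookkeeping, but confirming that the canonical identification $\O(E\otimes\bbK)\cong\O(E)\otimes\bbK$ is equivariant rather than a bare $*$-isomorphism; every other step is a formal consequence of Proposition \ref{P:stable-equivariant} once the two gauge actions are matched on generators. I would therefore spend the bulk of the write-up verifying this gauge-equivariance of the canonical map, after which both directions follow mechanically.
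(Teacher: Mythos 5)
Your argument is correct and follows essentially the same route as the paper: both apply Proposition \ref{P:stable-equivariant} to the stabilized correspondences and then transport through the canonical identification $\O(E\otimes \bbK)\cong \O(E)\otimes\bbK$ induced by $(\overline{\pi}\otimes\id,\overline{t}\otimes\id)$, whose equivariance is checked on generators exactly as you describe. The only difference is that you spell out why $E\otimes\bbK$ remains an imprimitivity bimodule, which the paper simply asserts.
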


\begin{proof}
Since $E\otimes \mathbb{K}$ and $F\otimes \mathbb{K}$ are unitarily equivalent, by Proposition \ref{P:stable-equivariant} we get that $\O(E\otimes \mathbb{K})\cong \O(F\otimes \mathbb{K})$ equivariantly.

Suppose now that $(\overline{\pi}, \overline{t})$ is a universal covariant representation of $E$. By \cite[Proposition 2.10]{DEG2020}, we get that the representation $(\overline{\pi}\otimes \id, \overline{t} \otimes \id)$ induces an isomorphism $\overline{\rho} : \O(E\otimes \mathbb{K}) \rightarrow \O(E)\otimes \mathbb{K}$ satisfying $\overline{\rho}(\xi \otimes K) = \overline{t}(\xi) \otimes K$ for every $\xi \in E$ and $K\in \mathbb{K}$. Hence $\overline{\rho}$ must be equivariant. It is similarly shown that $\O(F \otimes \mathbb{K}) \cong \O(F)\otimes \mathbb{K}$ equivariantly. Hence, we get equivariantly that
$$
\O(E) \otimes \mathbb{K} \cong \O(E\otimes \mathbb{K}) \cong \O(F \otimes \mathbb{K}) \cong \O(F) \otimes \mathbb{K}.
$$
With this, we obtain the first part of our result.

Conversely, if moreover $E$ and $F$ are imprimitivity bimodules, then so are $E\otimes \mathbb{K}$ and $F \otimes \mathbb{K}$. Thus, we are done by Proposition \ref{P:stable-equivariant} and the above identifications. 
\end{proof}

Let $E$ be a C*-correspondence over a C*-algebra $\A$, and $(\overline{\pi},\overline{t})$ a universal covariant representation of $E$, we denote by 
$$
\A_{\infty} = \O(E)_0 = \overline{\Span} \{ \ \overline{\pi}(\A), \ \psi_{\overline{t}^m}(\K(E^{\otimes m})) \ | \ m\geq 1 \ \}
$$ 
the fixed point algebra of $\O(E)$ under the gauge action $\gamma$. It is well-known that $\A_{\infty}$ is the direct limit of C*-subalgebras $\A_n$ given by 
$$
\A_n = \overline{\Span} \{ \ \overline{\pi}(\A), \ \psi_{\overline{t}^m}(\K(E^{\otimes m})) \ | \ 1 \leq m \leq n \ \}.
$$
When $E$ has an injective left action, we get that $\A_n = \psi_{\overline{t}^n} (\K(E^{\otimes m}))$ for each $n\in \mathbb{N}$, and we define $E_n : = \overline{t}(E) \A_n$ inside $\O(E)$. Then, $E_n$ becomes a C*-correspondence from $\A_n$ to $\A_{n+1}$, where the left action of $\A_{n+1}$ is defined by left multiplication in $\O(E)$. By taking the direct limit $E_{\infty} := \overline{t}(E) \cdot \A_{\infty} = \overline{\cup_{n=1}^{\infty} E_n}$, we obtain a C*-correspondence over $\A_{\infty}$ called the \emph{Pimsner dilation} of $E$. 

It was shown by Pimsner in \cite[Theorem 2.5 (2)]{Pim95} that the identification $E \otimes \A_{\infty} \cong E_{\infty}$ gives rise to an equivariant $*$-injection $\T(E) \rightarrow \T(E_{\infty})$, which then induces an equivariant isomorphism $\O(E) \cong \O(E_{\infty})$ between the quotients. When $E$ is also \emph{regular} and \emph{full}, the C*-correspondence $E_{\infty}$ becomes an imprimitivity bimodule.

\begin{Thm} \label{t:down-and-up}
Let $E$ and $F$ be \emph{regular} and \emph{full} C*-correspon\-dences over $\sigma$-unital C*-algebras $\A$ and $\B$ respectively. Then, $\O(E)$ and $\O(F)$ are equivariantly stably isomorphic if and only if $E_{\infty}$ and $F_{\infty}$ are strong Morita equivalent.
\end{Thm}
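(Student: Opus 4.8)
The plan is to reduce everything to a statement about imprimitivity bimodules and then invoke Corollary \ref{C:equivariant-stable-iso}. First I would use Pimsner's theorem (as recalled just before the statement, from \cite[Theorem 2.5 (2)]{Pim95}) that the identification $E \otimes \A_{\infty} \cong E_{\infty}$ induces an equivariant isomorphism $\O(E) \cong \O(E_{\infty})$, and likewise $\O(F) \cong \O(F_{\infty})$. Since $E$ and $F$ are regular and full, $E_{\infty}$ and $F_{\infty}$ are imprimitivity bimodules over $\A_{\infty}$ and $\B_{\infty}$. Consequently $\O(E)$ and $\O(F)$ are equivariantly stably isomorphic if and only if $\O(E_{\infty})$ and $\O(F_{\infty})$ are, and by the ``moreover'' clause of Corollary \ref{C:equivariant-stable-iso} (applicable precisely because $E_{\infty}, F_{\infty}$ are imprimitivity bimodules) this holds if and only if $E_{\infty} \otimes \bbK$ and $F_{\infty} \otimes \bbK$ are unitarily equivalent in the sense of Definition \ref{d:unitary-iso-dif-alg}. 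Thus the theorem reduces to the purely bimodule-theoretic equivalence: for imprimitivity bimodules over $\sigma$-unital C*-algebras, strong Morita equivalence is the same as stabilized unitary equivalence.

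For the forward implication of this reduction, suppose $E_{\infty}$ and $F_{\infty}$ are strongly Morita equivalent via an $\A_{\infty} - \B_{\infty}$ imprimitivity bimodule $M$; then the isomorphism $E_{\infty} \otimes M \cong M \otimes F_{\infty}$ together with $M \otimes M^* \cong \A_{\infty}$ and $M^* \otimes M \cong \B_{\infty}$ yields $E_{\infty} \cong M \otimes F_{\infty} \otimes M^*$. I would then stabilize: $M \otimes \bbK$ is an $\A_{\infty} \otimes \bbK - \B_{\infty} \otimes \bbK$ imprimitivity bimodule, and by the Brown--Green--Rieffel theorem (using $\sigma$-unitality) it is unitarily isomorphic to the twisted-identity bimodule $(\B_{\infty} \otimes \bbK)_{\rho}$ obtained by twisting the left action by some $*$-isomorphism $\rho : \A_{\infty} \otimes \bbK \to \B_{\infty} \otimes \bbK$. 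Conjugating $F_{\infty} \otimes \bbK$ by this bimodule is exactly transport of coefficients along $\rho$, so the stabilized identity $E_{\infty} \otimes \bbK \cong (M \otimes \bbK) \otimes (F_{\infty} \otimes \bbK) \otimes (M \otimes \bbK)^*$ becomes a unitary equivalence of $E_{\infty} \otimes \bbK$ and $F_{\infty} \otimes \bbK$ implemented by $\rho$, up to the evident identification.

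For the reverse implication, a unitary equivalence $E_{\infty} \otimes \bbK \cong F_{\infty} \otimes \bbK$ implemented by a $*$-isomorphism $\rho$ is itself a strong Morita equivalence of $E_{\infty} \otimes \bbK$ and $F_{\infty} \otimes \bbK$, witnessed by the imprimitivity bimodule $(\B_{\infty}\otimes\bbK)_{\rho}$. I would then invoke the stability of strong Morita equivalence under stabilization: compressing a strong Morita equivalence of $E_{\infty} \otimes \bbK$ and $F_{\infty} \otimes \bbK$ by full corners recovers an $\A_{\infty} - \B_{\infty}$ imprimitivity bimodule $M$ with $E_{\infty} \otimes M \cong M \otimes F_{\infty}$, that is, a strong Morita equivalence of $E_{\infty}$ and $F_{\infty}$. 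This descent is the heart of the matter.

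The main obstacle is precisely this last step together with its forward counterpart: making the passage between an imprimitivity bimodule $M$ and its stabilization $M \otimes \bbK \cong (\B_{\infty}\otimes\bbK)_{\rho}$ fully rigorous in the non-unital $\sigma$-unital setting, where one cannot simply compress by a rank-one projection but must instead work with full hereditary subalgebras and the associated linking algebras in the manner of Brown--Green--Rieffel. One must also carefully track the orientation of the bimodules (left versus right coefficient algebras) and verify that all the identifications above are compatible with the interior tensor products used throughout, so that the resulting maps are genuine unitary isomorphisms of correspondences rather than merely isometries of the underlying modules.
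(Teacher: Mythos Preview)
Your reduction is exactly the paper's: pass to the Pimsner dilations $E_{\infty}, F_{\infty}$ (which are imprimitivity bimodules since $E,F$ are regular and full), use the equivariant identifications $\O(E)\cong\O(E_{\infty})$ and $\O(F)\cong\O(F_{\infty})$, and then apply Corollary~\ref{C:equivariant-stable-iso} to reduce the question to whether $E_{\infty}\otimes\bbK$ and $F_{\infty}\otimes\bbK$ are unitarily equivalent. Where you diverge is only in how you dispatch the remaining bimodule-theoretic equivalence ``stabilized unitary equivalence $\Leftrightarrow$ strong Morita equivalence'' for imprimitivity bimodules over $\sigma$-unital coefficients. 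The paper simply cites \cite[Theorem~5.2 and Proposition~3.1]{EKK17}, noting that $\sigma$-unitality of $\A,\B$ passes to $\A_{\infty},\B_{\infty}$ so that any Morita equivalence bimodule $M$ is a $\sigma$-TRO in their sense. Your Brown--Green--Rieffel sketch is essentially what underlies those results in \cite{EKK17}: stabilize $M$, trivialize $M\otimes\bbK$ as a twisted identity bimodule via a $*$-isomorphism $\rho$, and conversely descend from a stabilized unitary equivalence by cutting down to full corners. So your approach is correct and not genuinely different, just more hands-on; the obstacles you flag (careful tracking of left/right coefficients, and the non-unital descent via full hereditary subalgebras rather than rank-one projections) are precisely the technical content packaged in the cited results of \cite{EKK17}. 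One small point you should make explicit is that $\sigma$-unitality of $\A_{\infty}$ and $\B_{\infty}$ follows from that of $\A$ and $\B$, since this is what licenses the BGR-type argument.
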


\begin{proof}
Since $E$ and $F$ are regular and full, we get that $E_{\infty}$ and $F_{\infty}$ are imprimitivity bimodules. Hence, by Corollary \ref{C:equivariant-stable-iso} we get that $\O(E)$ and $\O(F)$ are equivariantly stably isomorphic if and only if $E_{\infty} \otimes \mathbb{K}$ and $F_{\infty} \otimes \mathbb{K}$ are unitarily isomorphic.

Since $\A$ and $\B$ are $\sigma$-unital C*-algebras, so are $\A_{\infty}$ and $\B_{\infty}$. Hence, every strong Morita equivalence $M$ for $E_{\infty}$ and $F_{\infty}$ must be a $\sigma$-TRO in the sense of \cite[p. 6]{EKK17}, so that a combination of \cite[Theorem 5.2]{EKK17} and \cite[Proposition 3.1]{EKK17} shows that $E_{\infty}\otimes \mathbb{K}$ and $F_{\infty}\otimes \mathbb{K}$ are unitarily equivalent if and only if $E_{\infty}$ and $F_{\infty}$ are strongly Morita equivalent.
\end{proof}

It is important to say a few words about the assumptions in Theorem \ref{t:down-and-up} and what they mean for C*-correspondences of adjacency matrices with entries in $\mathbb{N}$. Suppose $A$ is a square adjacency matrix with entries in $\mathbb{N}$, and indexed by a set $V$. First note that $V$ is countable if and only if $c_0(V)$ is $\sigma$-unital if and only if $C^*(G_A)_0$ is $\sigma$-unital. Moreover, by \cite[Proposition 8.8]{Rae05} the following holds

\begin{enumerate}

\item $A$ has finitely supported rows if and only if $G_A$ has finite out-degrees, if and only if the left action of $X(A)$ has image in $\K(X(A))$.

\item $A$ has no zero rows if and only if $G_A$ has no sinks, if and only if $X(A)$ has an injective left action.

\item $A$ has no zero columns if and only if $G_A$ has no sources, if and only if $X(A)$ is full.

\end{enumerate}

So we see that in order to apply Theorem \ref{t:down-and-up} to $X(A)$, we must verify that $A$ is over a countable set $V$, has finitely supported rows and is essential. In this case, Theorem \ref{t:down-and-up} shows that the existence of an equivariant stable isomorphism between graph C*-algebras $C^*(G_A)$ and $C^*(G_B)$ coincides with the existence of a strong Morita equivalence of the Pimsner dilations $X(A)_{\infty}$ and $X(B)_{\infty}$.

\section{Compatible shift equivalence} \label{s:cse}

In this section we introduce and study \emph{compatible shift equivalence}, which is formulated in terms of adjacency matrices and path isomorphisms. We show that it is indeed an equivalence relation, and that strong shift equivalence implies compatible shift equivalence. In what follows, for a matrix $A$ we write $\id_A$ to mean $\id_{E_A}$.

\begin{Def} \label{d:cse}
Let $A$ and $B$ be matrices indexed by $V$ and $W$ respectively, with entries in $\mathbb{N}$. Suppose there are a \emph{lag} $m\in \mathbb{N} \setminus \{0\}$ and matrices $R$ over $V\times W$ and $S$ over $W \times V$ with entries in $\mathbb{N}$ together path isomorphisms
$$
\phi_R : E_A \times E_R \rightarrow E_R \times E_B, \ \  \phi_S : E_B \times E_S \rightarrow E_S \times E_A,
$$
$$
\psi_A : E_R \times E_S \rightarrow E_A^m, \ \ \psi_B : E_S \times E_R \rightarrow E_B^m.
$$
We say that $R$ and $S$ are \emph{compatible} if
\begin{equation} \label{e:compat}
\phi_R^{(m)} = (\id_R \times \psi_B)(\psi_A^{-1} \times \id_R), \ \ \phi_S^{(m)} = (\id_S \times \psi_A)(\psi_B^{-1} \times \id_S),
\end{equation}
where
$$
\phi_R^{(m)} := (\phi_R \times \id_{B^{m-1}}) (\id_A \times \phi_R \times \id_{B^{m-2}})  \cdots  ( \id_{A^{m-1}} \times \phi_R)
$$
$$
\phi_S^{(m)} := (\phi_S \times \id_{A^{m-1}}) (\id_B \times \phi_S \times \id_{A^{m-2}}) \cdots ( \id_{B^{m-1}} \times \phi_S).
$$
Finally, we say that $A$ and $B$ are \emph{compatibly shift equivalent} if they are shift equivalent via a compatible pair of matrices $R$ and $S$.
\end{Def}

The following Lemma shows that certain relations between the maps $\phi_R, \phi_S,\psi_A,\psi_B$ are automatic when $R$ and $S$ are compatible.

\begin{Lemma} \label{l:ase}
Suppose $A$ and $B$ are essential matrices over $V$ and $W$ respectively, with entries in $\mathbb{N}$. Suppose $A$ and $B$ are compatibly shift equivalent with lag $m$, matrices $R$ and $S$, and path isomorphisms $\phi_R, \phi_S$, $\psi_A,\psi_B$. Then
\begin{gather*}
(\psi_A \times \id_{A}) (\id_R \times \phi_S) = (\id_A \times \psi_A) (\phi_R^{-1} \times \id_S) \\
(\psi_B \times \id_{B}) (\id_S \times \phi_R) = (\id_B \times \psi_B) (\phi_S^{-1} \times \id_R).
\end{gather*}
\end{Lemma}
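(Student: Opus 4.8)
The two asserted identities are carried into one another by the symmetry $A\leftrightarrow B$, $R\leftrightarrow S$, $\phi_R\leftrightarrow\phi_S$, $\psi_A\leftrightarrow\psi_B$, so I would prove only the first. Since every map occurring is a path isomorphism, hence a bijection, the plan is first to precompose the first identity with the bijection $\phi_R\times\id_S$; the factor $\phi_R^{-1}\circ\phi_R$ on the right-hand side cancels, and the claim becomes the equivalent \emph{transport} form
\[
(\psi_A\times\id_A)\circ(\id_R\times\phi_S)\circ(\phi_R\times\id_S)=\id_A\times\psi_A
\]
on $E_A\times E_R\times E_S$. Intuitively this says that pushing a single $A$-edge first across $R$ and then across $S$ (so that $A$ becomes $B$ via $AR=RB$, and then $B$ becomes $A$ via $BS=SA$) agrees with transporting it from the left-hand to the right-hand side of the block $RS=A^{m}$ through the identification $\psi_A$. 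The second identity reduces symmetrically to the analogous statement with $R,S$ and $\psi_B$.

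The computational engine is the compatibility relation \eqref{e:compat}. Tensoring its first equation on the right by $\id_S$ and its second on the left by $\id_R$ yields
\[
\phi_R^{(m)}\times\id_S=(\id_R\times\psi_B\times\id_S)\circ(\psi_A^{-1}\times\id_R\times\id_S),
\]
\[
\id_R\times\phi_S^{(m)}=(\id_R\times\id_S\times\psi_A)\circ(\id_R\times\psi_B^{-1}\times\id_S).
\]
Composing the two right-hand sides, the adjacent factors $(\id_R\times\psi_B^{-1}\times\id_S)\circ(\id_R\times\psi_B\times\id_S)$ collapse to the identity, and the surviving $\psi_A^{-1}$ and $\psi_A$ telescope, so that
\[
(\psi_A\times\id_{A^{m}})\circ(\id_R\times\phi_S^{(m)})\circ(\phi_R^{(m)}\times\id_S)=\id_{A^{m}}\times\psi_A.
\]
This is the ``block'' version of the transport identity, in which the \emph{whole} power $A^{m}=RS$ is moved past $RS$. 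For $m=1$ we have $\phi_R^{(1)}=\phi_R$ and $\phi_S^{(1)}=\phi_S$, so the block identity is exactly the transport form above and the lemma follows at once.

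The main obstacle is to descend from this lag-$m$ block identity to the single-edge identity when $m\ge 2$, since \eqref{e:compat} only constrains the iterated maps $\phi_R^{(m)},\phi_S^{(m)}$ directly, and an iterated path isomorphism does not visibly determine its single-step constituent. My plan here is to feed in the recursive presentations $\phi_R^{(m)}=(\phi_R\times\id_{B^{m-1}})\circ(\id_A\times\phi_R^{(m-1)})$ and $\phi_S^{(m)}=(\phi_S\times\id_{A^{m-1}})\circ(\id_B\times\phi_S^{(m-1)})$, peel off the outermost copies of $\phi_R$ and $\phi_S$, and cancel the inner iterates against each other using bijectivity. The delicate point—and where the real work lies—is that this cancellation must exploit the extra rigidity of path isomorphisms (preservation of source and range) together with the structural constraints that \eqref{e:compat} imposes on the individual $\phi_R$ and $\phi_S$; purely formal cancellation at the level of the composite maps is insufficient, because the operation sending a single braiding to its $m$-fold iterate is not injective for arbitrary bijections. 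I expect this step to be executed by a direct combinatorial chase on composable paths, tracking the $A$-, $R$-, and $S$-coordinates separately and matching them against the explicit descriptions of $\phi_R^{(m)}$ and $\phi_S^{(m)}$ furnished by \eqref{e:compat}.
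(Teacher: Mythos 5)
Your reduction to the ``transport form'' and your derivation of the lag-$m$ block identity
\[
(\psi_A\times\id_{A^{m}})\circ(\id_R\times\phi_S^{(m)})\circ(\phi_R^{(m)}\times\id_S)=\id_{A^{m}}\times\psi_A
\]
are correct and coincide with the first step of the paper's proof (this is equation \eqref{eq:commute-phi-psi} there, written with inverses). You are also right that the entire difficulty is in descending from this constraint on the iterates $\phi_R^{(m)},\phi_S^{(m)}$ to the single-edge statement, and right that purely formal cancellation cannot achieve this. But that descent is precisely the step you have not carried out: ``peel off the outermost copies of $\phi_R$ and $\phi_S$ and cancel the inner iterates against each other'' is a plan, not an argument, and I do not see how to execute it, since the block identity never isolates a single application of $\phi_R$ or $\phi_S$. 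As it stands your proposal proves the lemma only for $m=1$.

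The mechanism that actually closes the gap (and is what the paper does) is not peeling but an overlapping-windows comparison. Extend the single edge $a$ to a composable path $a_0a_1\cdots$ in $E_A$ with $a_0=a$ (of length $2m$), and iterate the one-step transport $\Theta=(\phi_R^{-1}\times\id_S)\circ(\id_R\times\phi_S^{-1})$ along it, producing edges $a_k'\in E_A$ and pairs $(r_k,s_k)$ with $\Theta(r_k,s_k,a_k)=(a_k',r_{k+1},s_{k+1})$, $r_0=r$, $s_0=s$. Since the $m$-fold iterate of $\Theta$ computes the two sides of the block identity, applying that identity at the shifted starting windows $(r_0,s_0)$, $(r_1,s_1)$ and $(r_m,s_m)$ yields $\psi_A(r_0,s_0)=a_0'\cdots a_{m-1}'$, $\psi_A(r_1,s_1)=a_1'\cdots a_m'$, and $a_0\cdots a_{m-1}=\psi_A(r_m,s_m)=a_m'\cdots a_{2m-1}'$; comparing first letters in the last equality gives $a=a_0=a_m'$, whence
\[
(\id_A\times\psi_A)\circ\Theta(r,s,a)=(a_0',\psi_A(r_1,s_1))=a_0'a_1'\cdots a_m'=\psi_A(r,s)\,a=(\psi_A\times\id_A)(r,s,a),
\]
which is equivalent to the first asserted identity. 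This use of shifted, overlapping applications of the block identity along an extended path is the missing idea; the composability of paths (your ``rigidity of path isomorphisms'') enters exactly here, in making the windows overlap. Your symmetry reduction to the first identity is fine.
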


\begin{proof}
First note that by compatible shift equivalence we have that
$$
(\psi_A \times \id_R \times \id_S) = (\phi_R^{(m)} \times \id_S)^{-1} (\id_R \times \psi_B \times \id_S) = 
$$
$$
(\phi_R^{(m)} \times \id_S)^{-1}( \id_R \times \phi_S^{(m)})^{-1} (\id_R \times \id_S \times \psi_A).
$$
However, since 
$$
(\psi_A \times \id_R \times \id_S)(\id_R \times \id_S \times \psi_A^{-1}) = (\id_{A^m} \times \psi_A^{-1})(\psi_A \times \id_{A^m}),
$$ 
we actually get that
\begin{equation} \label{eq:commute-phi-psi}
(\psi_A \times \id_{A^m}) = (\id_{A^m} \times \psi_A) (\phi_R^{(m)} \times \id_S)^{-1}( \id_R \times \phi_S^{(m)})^{-1}.
\end{equation}

Now let $(r,s,a) \in E_R \times E_S \times E_A$. Denote $r_0 = r, s_0 = s$ and $a_0 = a$. Since $A$ is essential, we can find $a_1,\cdots ,a_m, a_{m+1}, ..., a_{2m-1} \in E_A$ so that $a_0 a_1 \cdots a_m a_{m+1} \cdot ... \cdot a_{2m-1} \in E_A^{m+1}$. We may then define inductively $r_k \in E_R, s_k \in E_S$ and $a_k' \in E_A$ for $0\leq k \leq 2m-1$ so that
$$
(\phi_R^{-1} \times \id_S)(\id_R \times \phi_S^{-1})(r_k,s_k,a_k) = (a_k', r_{k+1},s_{k+1}).
$$
From equation \eqref{eq:commute-phi-psi} we get that $\psi_A(r_0, s_0) = a_0'\cdots a_{m-1}'$, that $\psi_A(r_1, s_1) = a_1'\cdots a_m'$ and that $a_0\cdots a_{m-1} = \psi_A(r_m,s_m) = a_m' \cdots a_{2m-1}'$. In particular, we see that $a= a_0 = a_m'$.

To prove the first equality in the statement, we compute
$$
(\id_A \times \psi_A) (\phi_R^{-1} \times \id_S) (\id_R \times \phi_S^{-1}) (r,s,a) = (\id_A \times \psi)(a_0',r_1,s_1) =
$$
$$
a_0' a_1' \cdots a_m' = \psi_A(r,s)a_m' = \psi_A(r,s)a = (\psi_A \times \id)(r,s,a).
$$
This shows that $(\id_A \times \psi_A) (\phi_R^{-1} \times \id_S) (\id_R \times \phi_S^{-1}) = (\psi_A \times \id)$, which is then equivalent to the first equality $(\psi_A \times \id_{A}) (\id_R \circ \phi_S) = (\id_A \times \psi_A) (\phi_R^{-1} \times \id_S)$. A symmetric argument works to show the second equality as well. 
\end{proof}

\begin{Rmk}
Notice from the proof that we also get that $\psi_A$ and $\psi_B$ are uniquely determined by $\phi_R$ and $\phi_S$. Indeed, from equation \eqref{eq:commute-phi-psi} we get that
$$
(\psi_A \times \psi_A^{-1}) = (\phi_R^{(m)} \times \id_S)^{-1}( \id_R \times \phi_S^{(m)})^{-1},
$$
so that compressing to the first part yields back $\psi_A$. A similar argument then works for $\psi_B$ as well.
\end{Rmk}

\begin{Prop} \label{p:cse-transitive}
Compatible shift equivalence is an equivalence relation on the collection of essential matrices with entries in $\mathbb{N}$.
\end{Prop}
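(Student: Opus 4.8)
The plan is to check reflexivity, symmetry and transitivity in turn, the last being the only substantive case. For reflexivity I would exhibit explicit compatible data witnessing that $A$ is compatibly shift equivalent to itself: take lag $m=1$, $R=A$ and $S=I_V$ the identity matrix on $V$ (with entries in $\{0,1\}\subseteq\mathbb{N}$). The shift-equivalence identities $RS=SR=A$, $AR=RA$ and $SA=AS$ are immediate, and since $E_{I_V}=\{(v,v,0)\mid v\in V\}$ the products $E_A\times E_{I_V}$ and $E_{I_V}\times E_A$ are each canonically identified with $E_A$; letting $\phi_R=\id$ and taking $\phi_S,\psi_A,\psi_B$ to be these canonical identifications, one checks directly (with $\phi_R^{(1)}=\phi_R$) that both equations in \eqref{e:compat} collapse to identities. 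Symmetry requires no computation at all: Definition \ref{d:cse} is invariant under the simultaneous interchange
$$(A,B,R,S,\phi_R,\phi_S,\psi_A,\psi_B)\longmapsto (B,A,S,R,\phi_S,\phi_R,\psi_B,\psi_A),$$
under which the four shift-equivalence relations and the signatures of the four path isomorphisms map bijectively onto themselves and the two equations in \eqref{e:compat} are merely swapped.

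For transitivity, suppose $A\sim_{\mathrm{CSE}}B$ with lag $m_1$, matrices $R_1,S_1$ and maps $\phi_{R_1},\phi_{S_1},\psi_A^{(1)},\psi_B^{(1)}$, and $B\sim_{\mathrm{CSE}}C$ with lag $m_2$, matrices $R_2,S_2$ and maps $\phi_{R_2},\phi_{S_2},\psi_B^{(2)},\psi_C^{(2)}$. Following the classical construction for shift equivalence I set $R=R_1R_2$, $S=S_2S_1$ and lag $m_1+m_2$; the relations $RS=A^{m_1+m_2}$, $SR=C^{m_1+m_2}$, $AR=RC$ and $SA=CS$ follow by telescoping (for instance $R_1R_2S_2S_1=R_1B^{m_2}S_1=A^{m_2}R_1S_1=A^{m_1+m_2}$ using $AR_1=R_1B$). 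Under the canonical identifications $E_R=E_{R_1}\times E_{R_2}$ and $E_S=E_{S_2}\times E_{S_1}$, I define the composite path isomorphisms by conjugating through the intermediate matrix $B$:
$$\phi_R=(\id_{R_1}\times\phi_{R_2})(\phi_{R_1}\times\id_{R_2}),\qquad \phi_S=(\id_{S_2}\times\phi_{S_1})(\phi_{S_2}\times\id_{S_1}),$$
and, writing $\phi_{R_1}^{(k)}$ for the $k$-fold iterate that moves $E_A^k$ past $E_{R_1}$ (as in \eqref{e:compat}),
$$\psi_A=(\id_{A^{m_2}}\times\psi_A^{(1)})\big((\phi_{R_1}^{(m_2)})^{-1}\times\id_{S_1}\big)(\id_{R_1}\times\psi_B^{(2)}\times\id_{S_1}),$$
with $\psi_C$ defined symmetrically through $\phi_{S_2}^{(m_1)}$ and $\psi_B^{(1)}$. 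Each of these is a path isomorphism, being a composite of tensor products of path isomorphisms, and the signatures match the composite data $R=R_1R_2$, $S=S_2S_1$ of lag $m_1+m_2$.

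It remains to verify the two compatibility equations of Definition \ref{d:cse} for this composite data, and this is the heart of the argument and the step I expect to be the main obstacle. The verification is a diagram chase: one expands $\phi_R^{(m_1+m_2)}=(\phi_R\times\id_{C^{m_1+m_2-1}})\circ\cdots$ into its constituent $\phi_{R_1}$- and $\phi_{R_2}$-moves and repeatedly commutes the $\psi$'s past the $\phi$'s. The two available tools are (i) the given compatibility equations \eqref{e:compat} for each input equivalence, which let me trade an iterated $\phi_{R_i}$- (resp.\ $\phi_{S_i}$-) block for a matching pair $\psi^{-1},\psi$, and (ii) the relations of Lemma \ref{l:ase} for each input, which let me slide a $\psi^{(i)}$ past a $\phi^{(i)}$. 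The strategy is to reduce both sides of the desired identity to a common normal form in which the $\psi_B^{(2)}$-, $\psi_A^{(1)}$- and $\phi_{R_1}$-moves occur in a fixed order; the two sides then coincide by the naturality and associativity of the $\times$ operation together with the two input compatibilities. A symmetric chase handles the equation for $\phi_S$, establishing that $A\sim_{\mathrm{CSE}}C$ and completing the proof.
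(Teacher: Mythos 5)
Your construction is exactly the one the paper uses: the same composite matrices $R_1R_2$ and $S_2S_1$ with lag $m_1+m_2$, the same composite path isomorphisms $\phi_R=(\id_{R_1}\times\phi_{R_2})(\phi_{R_1}\times\id_{R_2})$ and, crucially, the same formula $\psi_A=(\id_{A^{m_2}}\times\psi_A^{(1)})((\phi_{R_1}^{(m_2)})^{-1}\times\id_{S_1})(\id_{R_1}\times\psi_B^{(2)}\times\id_{S_1})$ (with $\psi_C$ defined symmetrically). You also correctly identify the two tools the verification needs, namely the input compatibility equations \eqref{e:compat} and the sliding relations of Lemma \ref{l:ase}. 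Your explicit treatment of reflexivity via $R=A$, $S=I_V$ is fine and is more than the paper offers (it declares reflexivity and symmetry ``clear''), and your telescoping of the four shift-equivalence identities is correct.

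The weakness is that the one step carrying all the content of the proposition --- verifying $\phi_{R}^{(m_1+m_2)}\circ(\psi_A\times\id_{S})=\id_{R}\times\psi_C$ for the composite data --- is only described as a ``diagram chase to a common normal form'' and never executed. This is not a wrong approach, but as written it is an incomplete one: the identity is not a formal consequence of associativity and naturality of $\times$ alone, and one must actually track how an edge of $E_R\times E_S\times E_R$ transforms under both sides. The paper does this by introducing auxiliary edges $\mu_A r_3:=(\phi_{R_1}^{(m_2)})^{-1}(r_1\psi_B^{(2)}(r_1's'))$ and $r_3'\mu_C:=\phi_{R_2}^{(m_1)}(\psi_B^{(1)}(sr_2)r_2')$, deriving from Lemma \ref{l:ase} the intermediate identities $\psi_A(r_1r_1's's)=\mu_A\psi_A^{(1)}(r_3s)$ and $\psi_C(s'sr_2r_2')=\psi_C^{(2)}(s'r_3')\mu_C$, and then chaining these with the two input compatibilities in a specific order. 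Your sketch points in the right direction and the computation does go through, but you should carry it out (or at least exhibit the intermediate identities above) before claiming the proposition; in its absence a referee could not certify that the composite $\psi_A$ and $\psi_C$ you wrote down are the \emph{compatible} choices rather than merely some path isomorphisms of the right signature.
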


\begin{proof}
That compatible shift equivalence is reflexive and symmetric is clear. Thus, we need to show transitivity.

Let $A, B$ and $C$ be essential matrices over $\mathbb{N}$. Suppose that $A$ and $B$ are compatibly shift equivalent with lag $m$, matrices $R,S$ and path isomorphisms $\psi_A,\psi_B,\phi_R,\phi_S$, while $B$ and $C$ are compatibly shift equivalent with lag $m'$, matrices $R',S'$ and path isomorphisms $\psi_B', \psi_C', \phi_{R'}, \phi_{S'}$. We claim that $A$ and $C$ are compatibly shift equivalent with lag $m+m'$, matrices $RR', S'S$ and path isomorphisms $\psi_A', \psi_C, \phi_{RR'},\phi_{S'S}$ given by
\begin{gather*}
\psi_A' := (\id_{A^{m'}} \times \psi_A)((\phi_R^{(m')})^{-1} \times \id_S)(\id_R \times \psi_B' \times \id_S), \\
\psi_C:= (\id_{C^m} \times \psi_C')((\phi_{S'}^{(m)})^{-1} \times \id_{R'})(\id_{S'} \times \psi_B \times \id_{R'}), \\
\phi_{RR'}:= (\id_R \times \phi_{R'})(\phi_R \times \id_{R'}), \\
\phi_{S'S}:= (\id_S \times \phi_{S'})(\phi_S \times \id_{S'}).
\end{gather*}

It is easy to see that $RR'S'S = A^{m+m'}$, $S'SRR' = C^{m+m'}$, that $RR' C = ARR'$ and $CS'S =S'S A$. Moreover, it is clear that $\psi_A', \psi_C, \phi_{RR'}, \phi_{S'S}$ are path isomorphisms. Thus, in order to show that the above data comprises a compatible shift equivalence, we need only show
\begin{gather*}
\phi_{RR'}^{(m+m')} (\psi_A' \times \id_{RR'}) = \id_{RR'} \times \psi_C \\
\phi_{S'S}^{(m+m')} (\psi_C \times \id_{S'S}) = \id_{S'S} \times \psi_A'.
\end{gather*}
We will show the first of these equalities, and the second will follow from a symmetric argument.

First, let $r_1r_1's'sr_2r_2' \in E_{RR'} \times E_{S'S} \times E_{RR'}$, and denote 
\begin{equation} \label{eq:1}
\mu_Ar_3:= (\phi_R^{(m')})^{-1}(r_1\psi_B'(r_1's')) \ \ \text{and} \ \ r_3' \mu_C:= \phi_{R'}^{(m)}(\psi_B(sr_2)r_2').
\end{equation}
Then we get that
$$
\psi_A'(r_1r_1's's) = (\id_{A^m} \times \psi_A)((\phi_R^{(m)})^{-1} \times \id_S)(r_1 \psi_B'(r's')s) =
$$
$$
(\id_{A^m} \times \psi_A)(\mu_Ar_3 s) = \mu_A\psi_A(r_3s),
$$
and from Lemma \ref{l:ase} we also get that
$$
\psi_C(s'sr_2r_2') = (\id_{C^m} \times \psi_C')( (\phi_{S'}^{(m)})^{-1} \times \id_{R'})(s' \psi_B(sr_2)r_2') =
$$
$$
(\psi_C' \times \id_{C^m})(\id_{S'} \times \phi_{R'}^{(m)})(s' \psi_B(sr_2)r_2') = \psi_C'(s'r_3')\mu_C.
$$
Thus, together we obtained
\begin{equation} \label{eq:2}
\psi_A'(r_1r_1's's) = \mu_A\psi_A(r_3s) \ \ \text{and} \ \ \psi_C(s'sr_2r_2') = \psi_C'(s'r_3')\mu_C.
\end{equation}
Next, from compatibility we also get
\begin{equation} \label{eq:3}
\phi_R^{(m)}(\psi_A(r_3s)r_2)) = r_3\psi_B(sr_2) \ \ \text{and} \ \ \phi_{R'}^{(m)}(\psi_B'(r_1's')r_3') = r_1' \psi_C'(s'r_3').
\end{equation}
Combining equations \eqref{eq:1}, \eqref{eq:2}, \eqref{eq:3}, we compute

\begin{eqnarray*}
\lefteqn{\phi_{RR'}^{(m+m')}(\psi_A' \times \id_{RR'})(r_1r_1's'sr_2r_2')}\\
&=& \phi_{RR'}^{(m+m')}(\mu_A\psi_A(r_3s)r_2r_2')\\
&=& (\id_R \times \phi_{R'}^{(m+m')})(\phi_R^{(m+m')} \times \id_{R'})(\mu_A \psi_A(r_3s)r_2r_2') \\
&=& (\id_R \times \phi_{R'}^{(m+m')})(\phi_R^{(m')} \times \id_{B^m} \times \id_{R'})(\mu_A r_3 \psi_B(sr_2) r_2')\\
&=& (\id_R \times \phi_{R'}^{(m+m')})(r_1\psi_B'(r_1's')\psi_B(sr_2)r_2') \\
&=& (\id_R \times \phi_{R'}^{(m')} \times \id_{C^m})(r_1\psi_B'(r_1's')r_3'\mu_C)\\
&=& r_1r_1' \psi_C'(s'r_3')\mu_C\\
&=& r_1r_1'\psi_C(s'sr_2r_2') \\
&=& (\id_{RR'} \times \psi_C)(r_1r_1's'sr_2r_2').
\end{eqnarray*}

Thus, we have shown $\phi_{RR'}^{(m+m')}(\psi_A' \times \id_{RR'}) = \id_{RR'} \times \psi_C$ as desired.

\end{proof}

\begin{Cor} \label{c:sse_implies_cse}
Let $A$ and $B$ be essential matrices over $V$ and $W$ respectively, with entries in $\mathbb{N}$. If $A$ and $B$ are strong shift equivalent, then they are compatibly shift equivalent.
\end{Cor}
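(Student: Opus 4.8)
The plan is to reduce to the case $m=1$ (the elementary shift relation) and then invoke the transitivity of CSE from Proposition \ref{p:cse-transitive}. I would first dispose of the elementary case. Suppose $A$ and $B$ are elementary shift related, so there are $R$ over $V\times W$ and $S$ over $W\times V$ with entries in $\mathbb{N}$ such that $A=RS$ and $B=SR$; the remaining intertwiners are automatic, since $AR=RSR=RB$ and $BS=SRS=SA$. Because $A=RS$ and $B=SR$ hold as matrix identities, there are path isomorphisms $\psi_A:E_R\times E_S\to E_A$ and $\psi_B:E_S\times E_R\to E_B$. I would then simply \emph{define} $\phi_R$ and $\phi_S$ by the compatibility formulas,
\[
\phi_R := (\id_R \times \psi_B)(\psi_A^{-1} \times \id_R), \qquad \phi_S := (\id_S \times \psi_A)(\psi_B^{-1} \times \id_S).
\]
A check of types shows $\phi_R:E_A\times E_R\to E_R\times E_B$ and $\phi_S:E_B\times E_S\to E_S\times E_A$ are path isomorphisms (they are source- and range-preserving, being composites of such), and compatibility \eqref{e:compat} holds by construction since $\phi_R^{(1)}=\phi_R$ and $\phi_S^{(1)}=\phi_S$. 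Thus elementary shift related matrices carry a compatible shift equivalence, with no essentiality hypothesis needed at this stage.

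Next I would pass from elementary relations to SSE using Proposition \ref{p:cse-transitive}. Since that proposition establishes transitivity of CSE only among essential matrices, the crucial point is that a strong shift equivalence between the essential matrices $A$ and $B$ can be realised through a chain of elementary shift relations all of whose terms are essential. To see this I would pass, for a single elementary step $C=RS$, $D=SR$, to the bipartite graph $G$ on vertex set $V\sqcup W$ with edge sets $E_R$ (from $V$ to $W$) and $E_S$ (from $W$ to $V$): the vertices of $V$ (resp.\ $W$) lying on a bi-infinite path of $C$ (resp.\ $D$) are exactly those lying on a bi-infinite path of $G$, and restricting $R,S$ to the essential part $V'\sqcup W'$ of $G$ yields $R',S'$ with $C^{\mathrm{ess}}=R'S'$ and $D^{\mathrm{ess}}=S'R'$, where $(-)^{\mathrm{ess}}$ denotes passage to the essential part. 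Hence $C^{\mathrm{ess}}$ and $D^{\mathrm{ess}}$ are again elementary shift related, and applying this to each link of a given chain $A=C_0\sim C_1\sim\cdots\sim C_n=B$ produces a chain $A=C_0^{\mathrm{ess}}\sim\cdots\sim C_n^{\mathrm{ess}}=B$ of essential matrices (the endpoints are unchanged because $A$ and $B$ are already essential, so every vertex already lies on a bi-infinite path).

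With the threading in hand the argument closes quickly: each consecutive pair $C_i^{\mathrm{ess}},C_{i+1}^{\mathrm{ess}}$ consists of essential matrices that are elementary shift related, hence compatibly shift equivalent by the first paragraph, and Proposition \ref{p:cse-transitive} then chains these into a single compatible shift equivalence of $A$ and $B$. The main obstacle I anticipate is precisely the threading step: the definition of SSE permits intermediate matrices with zero rows or columns, whereas both the transitivity in Proposition \ref{p:cse-transitive} and the auxiliary Lemma \ref{l:ase} genuinely use the absence of sinks (no zero rows) in order to extend edges to longer paths. The essential-part reduction via the bipartite graph is what removes this gap, and verifying that it is compatible with the factorisations $C=RS$, $D=SR$ — in particular that no $C$-edge between surviving vertices can route through a deleted intermediate vertex — is the one place requiring a careful combinatorial check.
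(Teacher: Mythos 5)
Your argument is correct, and its core --- disposing of the elementary case by \emph{defining} $\phi_R:=(\id_R\times\psi_B)(\psi_A^{-1}\times\id_R)$ and $\phi_S:=(\id_S\times\psi_A)(\psi_B^{-1}\times\id_S)$ so that compatibility \eqref{e:compat} holds tautologically for lag $m=1$, and then chaining via the transitivity of Proposition \ref{p:cse-transitive} --- is exactly the paper's proof. Where you go beyond the paper is the threading step. The paper reduces to the elementary case without remarking that the intermediate matrices in an SSE chain between essential $A$ and $B$ need not themselves be essential, even though Proposition \ref{p:cse-transitive} is stated only for essential matrices and its proof (through Lemma \ref{l:ase}, which extends an edge $a_0$ to a longer path $a_0a_1\cdots a_m$) genuinely uses the absence of sinks. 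Your repair is sound: for a single link $C=RS$, $D=SR$, a vertex $w\in W$ carrying an $R$-edge out of a surviving $v$ and an $S$-edge into a surviving $v''$ lies on a bi-infinite path obtained by splicing a left ray into $v$ with a right ray out of $v''$, so no $C$-edge between surviving vertices routes through a deleted vertex and the factorisation restricts to the essential parts; the endpoints $A$ and $B$ are untouched since they are already essential. In short: same route as the paper, plus a legitimate and correctly executed repair of a point the paper's proof passes over in silence.
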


\begin{proof}
By Proposition \ref{p:cse-transitive} we know that compatible shift equivalence is an equivalence relation. Hence, it will suffice to show that if $A$ and $B$ are elementary shift related via $R$ and $S$, then $R$ and $S$ are compatible.

Since $A$ and $B$ are elementary shift related via $R$ and $S$, we have that $A = RS$ and $B= SR$. So choose some path isomorphisms $\psi_A : E_R \times E_S \rightarrow E_A$ and $\psi_B : E_S \times E_S \rightarrow E_B$ and define
$$
\phi_R : E_A \times E_R \rightarrow E_R \times E_A \ \ \text{and} \ \ \phi_S : E_B \times E_S \rightarrow E_S \times E_A
$$
by setting $\phi_R := (\id_R \times \psi_B)(\psi_A^{-1} \times \id_R)$ and $\phi_S := (\id_S \times \psi_A)(\psi_B^{-1} \times \id_S)$. Since the lag is $m=1$, compatibility follows by definition of $\phi_R$ and $\phi_S$.
\end{proof}

\begin{Rmk}

When $A$ and $B$ are essential matrices with entries in $\mathbb{N}$ over $V$ and $W$ respectively, it can be shown directly that compatible shift equivalence implies conjugacy of $(X_{E_A}, \sigma_{E_A})$ and $(X_{E_B},\sigma_{E_B})$ with a formula for the homeomorphism $h : X_{E_A} \rightarrow X_{E_B}$ which implements the conjugacy. Thus, by Williams' theorem, it follows that $A$ and $B$ are strong shift equivalent. We skip the argument here, because it will follow from Theorem \ref{t:main} that the converse of Corollary \ref{c:sse_implies_cse} holds.
\end{Rmk}

\section{Representable shift equivalence} \label{s:rse}

Our goal in this section is to determine when a shift equivalence between two matrices can be represented as operators on Hilbert space. This leads to the notion of representable shift equivalence. We show that compatible shift equivalence implies representable shift equivalence, and that a representation of shift equivalence can be chosen so that the graph C*-algebras act \emph{faithfully} on the Hilbert space.

Let $A$ and $B$ be matrices over $\mathbb{N}$ indexed by $V$ and $W$ respectively, and suppose that $R$ and $S$ are matrices over $\mathbb{N}$ indexed by $V \times W$ and $W\times V$ respectively. To ease some of our notation, we define two matrices indexed by $V \sqcup W$,
\begin{equation*}
C = \begin{bmatrix} A & 0 \\ 0 & B \end{bmatrix} \ \ \text{and}  \ \ D = \begin{bmatrix} 0 & R \\ S & 0 \end{bmatrix},
\end{equation*}
so that $A$ and $B$ are shift equivalent with lag $m$ via $R$ and $S$ if and only if 
$$
CD = DC \ \ \text{and} \ \ D^2 =C^m.
$$
From shift equivalence, there exist path isomorphisms
$$
\phi_R : E_A \times E_R \rightarrow E_R \times E_B, \ \  \phi_S : E_B \times E_S \rightarrow E_S \times E_A,
$$
$$
\psi_A : E_R \times E_S \rightarrow E_A^m, \ \ \psi_B : E_S \times E_R \rightarrow E_B^m.
$$
We may define path isomorphisms $\phi : E_C \times E_D \rightarrow E_D \times E_C$ and $\psi: E_D^2 \rightarrow E_C^m$ given by
\begin{equation*}
\psi = \begin{bmatrix} \psi_A & 0 \\ 0 & \psi_B \end{bmatrix} \ \ \text{and}  \ \ \phi = \begin{bmatrix} 0 & \phi_R \\ \phi_S & 0 \end{bmatrix}.
\end{equation*}
Conversely, all path isomorphisms $\phi : E_C \times E_D \rightarrow E_D \times E_C$ and $\psi: E_D^2 \rightarrow E_C^m$ must be of the above form for some path isomorphisms $\phi_R, \phi_S, \psi_A, \psi_B$ as above. Hence, we see that $A$ and $B$ are compatibly shift equivalent with lag $m$ via $R$ and $S$ if and only if there exist path isomorphisms $\phi : E_C \times E_D \rightarrow E_D \times E_C$ and $\psi: E_D^2 \rightarrow E_C^m$ such that
\begin{equation} \label{e:compatible}
\phi^{(m)} = (\id_D \times \psi) (\psi^{-1} \times \id_D),
\end{equation}
where $\phi^{(m)} : E_C^m \times E_D \rightarrow E_D \times E_C^m$ is the path isomorphism given by
$$
\phi^{(m)} := (\phi \times \id_{C^{m-1}}) (\id_{C} \times \phi \times \id_{C^{m-2}}) \cdots ( \id_{C^{m-1}} \times \phi).
$$

The following is the natural way to represent shift equivalence as bounded operators on Hilbert space, via some choice of path isomorphisms as above.

\begin{Def} \label{d:rse}
Let $A$ and $B$ be square matrices indexed by $V$ and $W$ respectively, with entries in $\mathbb{N}$. Let $m\in \mathbb{N} \setminus \{0\}$ and suppose there are matrices $R$ over $V\times W$ and $S$ over $W \times V$ with entries in $\mathbb{N}$ for which there exist path isomorphisms
$$
\phi : E_C \times E_D \rightarrow E_D \times E_C \ \ \ \text{and} \ \ \ \psi: E_D^2 \rightarrow E_C^m.
$$
We say that $R$ and $S$ are \emph{representable} via $\phi$ and $\psi$ if there are Cuntz--Krieger families $(P_v,S_c)$ for $G_C$ and $(P_v,T_d)$ for $G_D$ on the same Hilbert space $\mathcal{H}$ with $P_v \neq 0$ for all $v \in V \sqcup W$ such that
\begin{equation} \label{eq:psi}
T_{d_1d_2} = S_{c_1 \cdots c_m} \ \ \text{when} \ \ \psi(d_1d_2) = c_1 \cdots c_m \ \ \text{for} \ \ d_1d_2\in E_D^2,
\end{equation}
\begin{equation} \label{eq:phi}
S_cT_d = T_{d'}S_{c'} \ \ \text{when} \ \ \phi(cd) = d'c' \ \ \text{for} \ \ cd\in E_C\times E_D.
\end{equation}
We say that $A$ and $B$ are \emph{representable shift equivalent} if there are $R$ and $S$ and $\phi$ and $\psi$ as above so that $R$ and $S$ are representable via $\phi$ and $\psi$.
\end{Def}

\begin{Rmk} \label{r:same-vert-ops}
When $A$ and $B$ both have no zero columns we may define $R$ and $S$ to be representable via $\phi$ and $\psi$ by specifying apriori separate Cuntz-Krieger families $(S_v,S_c)$ for $G_C$ and $(T_w,T_d)$ for $G_D$. It then follows from equation \eqref{eq:psi} that $S_v = T_v$ for every $v\in V\sqcup W$. Indeed, since no vertex is a source, for every $v\in V$ there exists $c_1\cdots c_m \in E_C^m$ with $r(c_m) = v$, so we may take $\psi^{-1}(c_1 \cdots c_m) = d_1d_2$ with $d_1d_2\in E_D^2$ and $r(d_2) = v$. So that
$$
S_v = S_{c_1\cdots c_m}^*S_{c_1\cdots c_m} = T_{d_1d_2}^*T_{d_1d_2} = T_v.
$$
\end{Rmk}

\begin{Prop} \label{p:representable}
Let $A$ and $B$ be essential matrices indexed by $V$ and $W$ respectively, with entries in $\mathbb{N}$ and finitely supported rows. Suppose there are a lag $m \in \mathbb{N} \setminus \{0\}$ and matrices $R$ over $V\times W$ and $S$ over $W \times V$ with entries in $\mathbb{N}$, and path isomorphisms 
$$
\phi_R : E_A \times E_R \rightarrow E_R \times E_B, \ \  \phi_S : E_B \times E_S \rightarrow E_S \times E_A,
$$
$$
\psi_A : E_R \times E_S \rightarrow E_A^m, \ \ \psi_B : E_S \times E_R \rightarrow E_B^m
$$
such that
\begin{equation} \label{rb=ar'}
(\psi_A \times \id_A) (\id_R \times \phi_S) = (\id_A \times \psi_A)  (\phi_R^{-1} \times \id_S) \ \ \text{and}
\end{equation}
\begin{equation} \label{sr=b_1...b_m}
\phi_S^{(m)} = (\id_S \times \psi_A)(\psi_B^{-1} \times \id_S).
\end{equation}
Then $R$ and $S$ are representable via $\phi$ and $\psi$.
\end{Prop}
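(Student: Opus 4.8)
\textit{Proof proposal.} The plan is to realize both Cuntz--Krieger families concretely on one infinite-path Hilbert space, using the path isomorphism $\psi$ to identify the infinite paths of $G_D$ with those of $G_C$. First note that the standing hypotheses force the relevant finiteness: since $A,B$ are essential with finitely summing rows, the relations $A^m=RS$, $B^m=SR$ show that $R$ and $S$ are essential as well, so $C$ and $D$ are essential with finite out-degrees, i.e.\ $G_C$ and $G_D$ have no sinks, no sources, and finite out-degrees. Consequently the standard infinite-path representations on $\ell^2(X_{E_C}^+)$ and $\ell^2(X_{E_D}^+)$ are Cuntz--Krieger families with every vertex projection nonzero. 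I would then define $\Theta\colon X_{E_D}^+\to X_{E_C}^+$ by pairing consecutive $D$-edges and applying $\psi$, namely $\Theta(e_1e_2e_3e_4\cdots)=\psi(e_1e_2)\psi(e_3e_4)\cdots$; because $\psi$ preserves sources and ranges, consecutive $\psi$-blocks concatenate, so $\Theta$ is a well-defined bijection respecting the $V\sqcup W$ partition of initial vertices, and it induces a unitary $U\colon\ell^2(X_{E_D}^+)\to\ell^2(X_{E_C}^+)$.

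Next I would put the standard $C$-family $(P_v,S_c)$ on $\mathcal{H}:=\ell^2(X_{E_C}^+)$ and transport the standard $D$-family $(\,\cdot\,,\widetilde T_d)$ across $U$, defining $T_d:=U\widetilde T_dU^*$. Since $U$ carries paths starting at $v$ to paths starting at $v$, it intertwines the two families of vertex projections, so $(P_v,S_c)$ and $(P_v,T_d)$ are Cuntz--Krieger families for $G_C$ and $G_D$ on the same $\mathcal{H}$ with common nonzero $P_v$ (this coincidence of vertex operators is also guaranteed by Remark \ref{r:same-vert-ops}, as there are no zero columns). Relation \eqref{eq:psi} is then immediate: prepending the \emph{even}-length word $d_1d_2$ preserves the $\Theta$-grouping, so $\Theta(d_1d_2x)=\psi(d_1d_2)\,\Theta(x)$, whence $T_{d_1}T_{d_2}=U\widetilde T_{d_1}\widetilde T_{d_2}U^*$ prepends the block $\psi(d_1d_2)=c_1\cdots c_m$ and therefore equals $S_{c_1}\cdots S_{c_m}$.

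The relation \eqref{eq:phi}, $S_cT_d=T_{d'}S_{c'}$ with $\phi(cd)=d'c'$, is the heart of the matter and where the hypotheses enter. Because $d$ is a \emph{single} $D$-edge, $T_d$ prepends an odd letter and hence shifts the $\Theta$-grouping by one; consequently $S_cT_d$ and $T_{d'}S_{c'}$, applied to a basis vector $\delta_{\Theta(x)}$, each return an infinite $C$-path obtained by a global regrouping, and I must show these two infinite paths agree. The strategy is to check agreement on every finite prefix by commuting the extra $C$-edge through the path block-by-block. The crucial local step---that a single edge $c\in E_A$ passes through one $\psi$-block via $c\cdot\psi_A(de_1)=\psi_A(d'g_1)\cdot h$, where $\phi_R(cd)=d'c'$ and $\phi_S(c'e_1)=g_1h$ with $h\in E_A$---is precisely a reformulation of hypothesis \eqref{rb=ar'}; iterating it propagates a residual $A$-edge down the path and reproduces the block structure of the $S_{c'}$-side. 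Hypothesis \eqref{sr=b_1...b_m}, which rewrites $\phi_S^{(m)}$ through $\psi_A$ and $\psi_B$, is then exactly what reconciles the $S$- and $R$-edges produced by this $\phi$-commutation with those produced when $c'\cdot\Theta(x)$ is decoded into $D$-pairs, so that the two prefixes match at every stage. The symmetric instance of \eqref{eq:phi} with $c\in E_B$ is handled by the same bookkeeping, the required $\psi_B$-block commutation being obtained from the assumed relations together with \eqref{eq:psi} and the Cuntz--Krieger relations.

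The main obstacle is precisely this verification of \eqref{eq:phi}: controlling the infinite regrouping created by the odd-length shift and showing that the commutation supplied by \eqref{rb=ar'} remains aligned with the $\psi$-decoding at every block, a matching for which \eqref{sr=b_1...b_m} is indispensable. By contrast, the construction of $\mathcal{H}$, the Cuntz--Krieger relations, the nonvanishing of the $P_v$, and \eqref{eq:psi} are all routine. Once \eqref{eq:phi} is established, the families $(P_v,S_c)$ and $(P_v,T_d)$ witness that $R$ and $S$ are representable via $\phi$ and $\psi$, which is the assertion.
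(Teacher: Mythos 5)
Your construction is, up to conjugation by the unitary induced by the block-decoding bijection $\Theta$, the same one-sided infinite-path representation the paper uses, and your identification of the one-block commutation $c\cdot\psi_A(de_1)=\psi_A(d'g_1)\cdot h$ as a restatement of \eqref{rb=ar'} is exactly right. The genuine gap is in the verification of \eqref{eq:phi}, which you correctly flag as the heart of the matter but then only assert. Because you define $S_b$ for $b\in E_B$ by direct $\psi_B$-decoding (prepend $b$ on the $C$-side and re-chop into $m$-blocks), \emph{both} halves of \eqref{eq:phi} become infinite regrouping identities, and the identity $S_aT_r=T_{r'}S_b$ needs, in addition to the iterated $A$-side step from \eqref{rb=ar'}, the $B$-side one-block identity $b\,\psi_B(s_0r_0)=\psi_B(g_0r_0')\,b_1$, where $\phi_S(bs_0)=g_0h_0$ and $\phi_R(h_0r_0)=r_0'b_1$. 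That identity is not among your hypotheses: it is the $B$-analogue of \eqref{rb=ar'}, which the paper only has access to under full compatibility (Lemma \ref{l:ase}). It \emph{does} follow from \eqref{rb=ar'} and \eqref{sr=b_1...b_m}, but by a nontrivial argument — e.g.\ writing $\psi_B(s_0r_0)=\beta_1\cdots\beta_m$, evaluating $\phi_S^{(m)}(b\beta_1\cdots\beta_{m-1}\sigma_m)$ both by peeling $\phi_S$ from the right and by applying \eqref{sr=b_1...b_m} to the unknown block $\psi_B^{-1}(b\beta_1\cdots\beta_{m-1})$, then comparing with \eqref{rb=ar'} and invoking injectivity of $\phi_S$ to get $\beta_m=b_1$ so the step can be iterated. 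Saying that \eqref{sr=b_1...b_m} ``reconciles'' the two decodings names the right hypothesis but skips the lemma your induction needs at every block; the same omission affects your treatment of the symmetric instance $S_bT_s=T_{s'}S_a$.

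For contrast, the paper defines $S_b$ asymmetrically, by one application of $\phi_S$ followed by the $\psi_A$-decoding of the tail. With that choice $S_bT_s=T_{s'}S_a$ is essentially definitional and $T_{rs}=S_{\psi_A(rs)}$ is immediate, so each hypothesis is consumed exactly once: \eqref{sr=b_1...b_m} for $T_{sr}=S_{\psi_B(sr)}$ and \eqref{rb=ar'} for $S_aT_r=T_{r'}S_b$ — at the price of having to check the Cuntz--Krieger relations for $S_b$ by hand, which your picture gets for free. So the skeleton and the local ingredient are right, but the central computation constituting the content of the proposition is left unexecuted, and completing it along your route requires an extra commutation lemma that must first be extracted from the two stated hypotheses.
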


\begin{proof}
Let $X_{E_D}^+$ be the one-sided subshift for $G_D = (V\sqcup W, E_D)$, and let $\{e_x\}_{x\in X_{E_D}^+}$ be an orthonormal basis for $\ell^2(X_{E_D}^+)$. We denote by $VX_{E_D}^+$ and $WX_{E_D}^+$ the clopen subsets of those $x\in X_{E_D}^+$ such that $s(x) \in V$ and $s(x) \in W$ respectively. We then have the homeomorphisms $\psi_A^{\infty} : VX_{E_D}^+ \rightarrow X_{E_A}^+$ and $\psi_B^{\infty} : WX_{E_D}^+ \rightarrow X_{E_B}^+$ given by 
$$
\psi_A^{\infty}(r_0s_0r_1s_1 \cdots ) = \psi_A(r_0s_0)\psi_A(r_1s_1) \cdots ,
$$
$$
\psi_B^{\infty}(s_0r_0s_1r_1\cdots ) = \psi_B(s_0r_0)\psi_B(s_1r_1)\cdots 
$$
for $r_i \in E_R$ and $s_i \in E_S$.

We define $P_v$ for $v\in V \sqcup W$ by
$$
P_v(e_x) = \begin{cases}
e_x & \text{if }v=s(x),\\
0 & \text{otherwise.}
\end{cases}
$$

For $a \in E_A$ and $x\in X_{E_D}^+$, and define $S_a$ via
$$
S_a(e_x) = \begin{cases}
e_y & \text{if } r(a)=s(x), \\
0 & \text{otherwise},
\end{cases}
$$
where $y := (\psi_A^{\infty})^{-1}(a\psi_A^{\infty}(r_0s_0r_1s_1\cdots ))$ when we write $x= r_0s_0\cdots $ for elements $r_i \in E_R$ and $s_i\in E_S$ in case that $r(a) = s(x)$.

For $b\in E_B$, $x\in X_{E_D}^+$ and define $S_b$ via
$$
S_b(e_x) = \begin{cases}
e_y & \text{if } r(b)=s(x), \\
0 & \text{otherwise}
\end{cases}
$$
where $y := (\id_S \times \psi_A^{\infty})^{-1}(\phi_S(bs_0)\psi^{\infty}_A(r_0s_1r_1s_2 \cdots ))$ when we write $x= s_0r_0 \cdots $ for elements $s_i \in E_S$, $r_i\in E_R$ in the case that $r(b)=s(x)$.

Finally, we define for $d\in E_D$ the operator $T_d$ via
$$
T_d(e_x) = \begin{cases}
e_{dx} & \text{if } r(d) = s(x), \\
0 & \text{otherwise}.
\end{cases}
$$
It is straightforward to verify that $(P_v,T_d)$ is a Cuntz-Krieger family for $G_D$. So we first verify that $(P_v,S_c)$ is a Cuntz-Krieger family for $G_C$. Since concatenation for $S_a$ is done simply through the homeomorphism $\psi_A^{\infty}$, it is easy to show that for $a\in E_A$ or $v\in V$, we have $S_a^*S_a = P_{r(a)}$, and $\sum_{s(e)=v} S_eS_e^* = P_v$. Now, to show the same for $b\in E_B$, for $y=s_0'r_0's_1'r_1' \cdots$ when $s_i' \in E_S$ and $r_i' \in E_R$ we write $(\id_B \times \id_S \times \psi_A^{\infty})^{-1}(\phi_S^{-1}\times \id)(s_0'\psi_A^{\infty}(r_0's_1'r_1' \cdots)) = b's_0r_0 \cdots$ for $b'\in E_B$, $s_i \in E_S$, $r_i \in E_R$, so that
$$
S_b^*(e_y) = \begin{cases}
e_{s_0r_0 \cdots} & \text{if } r(b)=s(x), b=b',\\
0 & \text{otherwise},
\end{cases}
$$
From this formula it follows that $S_b^*S_b = P_{r(b)}$, and that for $w\in W$ we have $\sum_{s(e) = w} S_eS_e^* = P_w$. Thus, we see that $(P_v,S_c)$ is a Cuntz-Krieger family for $G_C$. Since clearly $P_v \neq 0$ for every $v\in V \sqcup W$, we are left with verifying equations \eqref{eq:psi} and \eqref{eq:phi}.

It is clear from the definition of $S_a$ for $a\in E_A$ that for $r\in E_R$ and $s\in E_S$ we have $T_{rs} = S_{a_1 \cdots a_m}$ when $\psi_A(rs) =a_1 \cdots a_m$. We next show that $S_bT_s = T_{s'}S_a$ when $\phi_S(bs) = s'a$ for $b\in E_B$ and $s\in E_S$. Indeed, let $x\in X_{E_D}^+$ with $s(x) \in V$, and write $x = r_0s_0 \cdots$ so that $S_bT_s(e_x) = e_z$ and $T_{s'}S_a(e_x) = e_{z'}$ where
$$
z = (\id_S \times \psi_A^{\infty})^{-1}\phi_S(bs)\psi_A^{\infty}(r_0s_0 \cdots) 
$$
and
$$
z' = s'(\psi_A^{\infty})^{-1}(a\psi_A^{\infty}(r_0s_0 \cdots)).
$$
Since $\phi(bs) = s'a$, it follows that $z=z'$, so that $S_bT_s = T_{s'}S_a$.

Next, we show that $T_{sr} = S_{b_1 \cdots b_m}$ when $\psi_B(sr) = b_1\cdots b_m$. Indeed, let $x\in X_{E_D}^+$ with $s(x) \in W$, and write $x=s_0r_0 \cdots$. Then we have that $S_{b_1\cdots b_m}(e_x) = e_z$ where
$$
z = (\id_S \times \psi_A^{\infty})^{-1}(\phi_S^{(m)}(b_1 \cdots b_ms_0)\psi_A^{\infty}(r_0s_1 \cdots ))
$$
From equation \eqref{sr=b_1...b_m} it follows that
$$
z = (\psi_B)^{-1}(b_1\cdots b_m)s_0r_0s_1 \cdots
$$
Thus, we see that if $\psi_B(sr) = b_1 \cdots b_m$, then $T_{sr} = S_{b_1 \cdots b_m}$.

Finally, we show that when $\phi_R(ar)=r'b$ we have $S_aT_r = T_{r'}S_b$. Indeed, let $x\in X_{E_D}^+$ with $s(x) \in W$, and write $x=s_0r_0 \cdots$ so that $S_aT_r(e_x) = e_z$ and $T_{r'}S_b(e_x) = e_{z'}$ where
$$
z = (\psi_A^{\infty})^{-1}(a\psi_A^{\infty}(rs_0r_0\cdots ))
$$
and
$$
z' = r'(\id_S \times \psi_A^{\infty})^{-1}(\phi_S(bs_0)\psi_A^{\infty}(r_0s_1r_1\cdots ))
$$
But by equation \eqref{rb=ar'} and the fact that $\phi_R(ar) = r'b$ we get that
$$
z' = (\psi_A^{\infty})^{-1}(\id_A \times \psi_A)(\phi_R^{-1} \times \id_S)(r'bs_0)\psi_A^{\infty}(r_0s_1r_1\cdots ) =
$$
$$
(\psi_A^{\infty})^{-1}(\id_A \times \psi_A)(ars_0 \cdot \psi_A^{\infty}(r_0s_1r_1\cdots )) = (\psi_A^{\infty})^{-1}(a\psi_A^{\infty}(rs_0r_0\cdots )) = z.
$$
Hence, we get that $z'=z$, so that $S_aT_r = T_{r'}S_b$. Thus, we have shown that that $R$ and $S$ are representable via $\phi$ and $\psi$.
\end{proof}

\begin{Rmk}
We note that for two essential matrices $A$ and $B$ with entries in $\mathbb{N}$ to be representable shift equivalent, we need only know the validity of the two asymmetric equations \eqref{rb=ar'} and \eqref{sr=b_1...b_m}, as opposed to the symmetric equations in \eqref{e:compat}.
\end{Rmk}

Using Szyma\'{n}ski's uniqueness theorem \cite[Theorem 1.2]{Szy02}, we can upgrade a representation of a shift equivalence to be injective in the following sense.

\begin{Cor} \label{c:cse-implies-rse}
Let $A$ and $B$ be essential matrices indexed by $V$ and $W$ respectively, with entries in $\mathbb{N}$ and finitely supported rows. Suppose that $A$ and $B$ are compatibly shift equivalent via $R$ and $S$ and path isomorphisms $\phi$ and $\psi$. Then $R$ and $S$ are representable via $\phi$ and $\psi$. In fact, there are Cuntz-Krieger families $(P_v,S_c)$ for $G_C$ and $(P_v,T_d)$ for $G_D$ satisfying equations \eqref{eq:psi} and \eqref{eq:phi} so that both of the canonical surjections $q_C: C^*(G_C) \rightarrow C^*(P_v,S_c)$ and $q_D : C^*(G_D) \rightarrow C^*(P_v,T_d)$ are injective.
\end{Cor}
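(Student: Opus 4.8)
The plan is to read off representability from Proposition~\ref{p:representable}, and then to enlarge the representation it produces so that Szyma\'{n}ski's uniqueness theorem delivers faithfulness.

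First I would check that compatible shift equivalence supplies the two hypotheses of Proposition~\ref{p:representable}. The second defining relation of compatibility in \eqref{e:compat} is exactly equation~\eqref{sr=b_1...b_m}, while Lemma~\ref{l:ase} provides equation~\eqref{rb=ar'}; these are the only inputs Proposition~\ref{p:representable} needs. Applying it gives Cuntz--Krieger families $(P_v,S_c)$ for $G_C$ and $(P_v,T_d)$ for $G_D$ on $\ell^2(X_{E_D}^+)$ satisfying \eqref{eq:psi} and \eqref{eq:phi}, with all $P_v\neq 0$. This already establishes that $R$ and $S$ are representable via $\phi$ and $\psi$, so the remaining issue is faithfulness of $q_C$ and $q_D$.

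The families just constructed are, after conjugating the $S$-operators by the homeomorphisms $\psi_A^\infty$ and $\psi_B^\infty$, nothing but path-space (left regular) representations, and these fail to be faithful exactly at cycles without exits: for such a cycle $\mu$ based at $v$, the projection $P_v$ is one-dimensional and $T_\mu$ (respectively $S_\mu$) acts as the scalar $1$, so its spectrum is $\{1\}$ rather than all of $\mathbb{T}$. To correct this without disturbing \eqref{eq:psi} and \eqref{eq:phi}, I would tensor by $\ell^2(\mathbb{Z})$ and twist by the bilateral shift $U$, setting $\hat P_v=P_v\otimes 1$, $\hat T_d=T_d\otimes U^m$ and $\hat S_c=S_c\otimes U^2$. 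The exponents $m$ and $2$ are forced by the length mismatch in \eqref{eq:psi}: two $\hat T$'s and $m$ $\hat S$'s must both acquire the factor $U^{2m}$, and \eqref{eq:phi} survives because each side acquires $U^{m+2}$. A direct check shows $(\hat P_v,\hat T_d)$ and $(\hat P_v,\hat S_c)$ are again Cuntz--Krieger families for $G_D$ and $G_C$ and still satisfy \eqref{eq:psi} and \eqref{eq:phi}.

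Finally I would invoke Szyma\'{n}ski's theorem \cite[Theorem~1.2]{Szy02} for each of $q_C$ and $q_D$. Since $\hat P_v\neq 0$ for all $v$, it remains to treat cycles without exits. For a no-exit cycle $\mu$ of $D$-length $k$ in $G_D$, the corner $\hat P_{s(\mu)}\mathcal{H}$ is $(P_{s(\mu)}\ell^2(X_{E_D}^+))\otimes\ell^2(\mathbb{Z})$ with the first factor one-dimensional, and there $\hat T_\mu$ acts as $1\otimes U^{mk}$, which has spectrum all of $\mathbb{T}$; the analogous computation with $U^{2k}$ handles every no-exit cycle of $G_C=G_A\sqcup G_B$ via $\hat S_\mu$. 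Szyma\'{n}ski's criterion is thus met and both $q_C$ and $q_D$ are injective. The main obstacle is precisely this length mismatch between the $C$- and $D$-pictures: because a single $C$-edge is not a product of $D$-edges while two $D$-edges equal $m$ $C$-edges under $\psi$, the two graph C*-algebras carry gauge actions of incompatible weights, which rules out a direct appeal to the gauge-invariant uniqueness theorem and makes the simultaneous choice of twists $U^m$ and $U^2$ --- consistent with \eqref{eq:psi}, \eqref{eq:phi}, and full spectrum on every no-exit cycle of both graphs --- the delicate point.
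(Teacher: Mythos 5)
Your argument is correct and follows the paper's proof almost exactly: both derive equation \eqref{rb=ar'} from Lemma \ref{l:ase}, observe that \eqref{sr=b_1...b_m} is part of the definition of compatibility, invoke Proposition \ref{p:representable} to get representability, and then modify the resulting families so that every cycle without exits supports a unitary with full spectrum, at which point Szyma\'{n}ski's uniqueness theorem \cite[Theorem 1.2]{Szy02} yields injectivity of $q_C$ and $q_D$. The only divergence is the twisting device used in the last step: the paper forms a countable direct sum of scalar-twisted families $(P_v, z_n S_c)$ and $(P_v, T_d^{(z_n)})$ over a dense sequence $(z_n)$ in $\mathbb{T}$ (placing the factor $z_n^m$ only on the $E_S$-edges), whereas you tensor once with the powers $U^2$ and $U^m$ of the bilateral shift on $\ell^2(\mathbb{Z})$; both weightings are consistent with \eqref{eq:psi} and \eqref{eq:phi}, both force full spectrum on every no-exit cycle of $G_C$ and $G_D$, and the two constructions are essentially interchangeable.
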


\begin{proof}
From Lemma \ref{l:ase} equation \eqref{rb=ar'} holds. Since equation \eqref{sr=b_1...b_m} holds by definition, by Proposition \ref{p:representable} there are Cuntz-Krieger families $S:=(P_v,S_c)$ for $G_C$ and $T:=(P_v,T_d)$ for $G_D$ satisfying equations \eqref{eq:psi} and \eqref{eq:phi}.

Let $z\in \mathbb{T}$ be some unimodular scalar. Then we may define two operator families $S^{(z)}:= (P_v,z\cdot S_c)$ and $T^{(z)}:=(P_v,T_d^{(z)})$ where 
$$
T_d^{(z)}:= \begin{cases}
T_d & \text{if } d\in E_R \\
z^m \cdot T_d & \text{if } d \in E_S
\end{cases}
$$
Then clearly $S^{(z)}$ and $T^{(z)}$ are Cuntz-Krieger families satisfying equations \eqref{eq:psi} and \eqref{eq:phi}. 

Let $(z_n)_{n\in \mathbb{N}}$ be a countable dense subset of $\mathbb{T}$. We take $S':= \oplus_{n=1}^{\infty}S^{(z_n)}$ and $T':= \oplus_{n=1}^{\infty}T^{(z_n)}$, which are still Cuntz-Krieger families satisfying equations \eqref{eq:psi} and \eqref{eq:phi}. By Szyma\'{n}ski's uniqueness theorem \cite[Theorem 1.2]{Szy02}, it suffices to show that for every cycle $c_1\cdots c_{\ell}$ without exits in $G_C$, and every cycle $d_1\cdots d_{2t}$ without exits in $G_D$ (which is necessarily of even length since $G_D$ is bipartite), the spectrum of the operators $S_{c_1\cdots c_{\ell}}$ and $T_{d_1\cdots d_{2t}}$ contains the entire unit circle. 

Since $S_{c_1\cdots c_{\ell}}$ and $T_{d_1\cdots d_{2t}}$ are unitaries on the ranges of $P_{s(c_1)}$ and $P_{s(d_1)}$ respectively, each of their spectra must contain some element in the unit circle. But since for every $n \in \mathbb{N}$ we have that $z_n^{\ell} \cdot S_{c_1\cdots c_{\ell}}$ is a direct summand of $S'_{c_1\cdots c_{\ell}}$, and $z_n^{m \cdot t} \cdot T_{d_1 \cdots d_{2t}}$ is a direct summand of $T'_{d_1\cdots d_{2t}}$, we see that the spectra of $S'_{c_1\cdots c_{\ell}}$ and $T'_{d_1\cdots d_{2t}}$ both contain a dense subset of $\mathbb{T}$, and hence $\mathbb{T}$ itself. Thus, by Szyma\'{n}ski's uniqueness theorem we get that the canonical surjections $C^*(G_C) \rightarrow C^*(P'_v,S'_c)$ and $C^*(G_D) \rightarrow C^*(P'_v,T'_d)$ are injective.
\end{proof}

\begin{Rmk}
Suppose we have two essential matrices $A$ and $B$ with entries in $\mathbb{N}$ indexed by $V$ and $W$ respectively, and that $R,S$ are matrices that comprise a representable shift equivalence of lag $m$ via path isomorphisms $\phi$ and $\psi$. It can be shown directly that $A$ and $B$ are compatible shift equivalent with lag $m$ via $R$ and $S$, together with the $1-1$ and $2-2$ corners of $\psi$ and the $1-2$ and $2-1$ corners of $\phi$. We skip the proof since representable shift equivalence implies compatible shift equivalence by Theorem \ref{t:main}.
\end{Rmk}

\section{Strong Morita shift equivalence} \label{s:smse}

In this section we introduce and study strong Morita shift equivalence. This equivalence relation is expressed in terms of a specific strong Morita equivalence between Pimsner dilations, and is implied by representable shift equivalence. Strong Morita shift equivalence turns out to imply the existence of a stable equivariant isomorphism of graph C*-algebras that also preserves the diagonal subalgebras.

Suppose $A$ and $B$ are essential matrices over $\mathbb{N}$ indexed by $V$ and $W$ respectively, and have finitely supported rows. Suppose there are matrices $R$ and $S$ with entries in $\mathbb{N}$, and let $C$ and $D$ be as described in the previous section. Suppose further that we (only) have a path isomorphism $\psi : E_D^2 \rightarrow E_C^m$, which is then the direct sum of path isomorphisms 
$$
\psi_A : E_R \times E_S \rightarrow E_A^m, \ \ \text{and} \ \ \psi_B : E_S \times E_R \rightarrow E_B^m.
$$

Now let $(S_v, S_c)$ be a CK family that generates the graph C*-algebra $C^*(G_C) = C^*(G_A) \oplus C^*(G_B)$ of $G_C$, and let $(T_v,T_d)$ be a CK family that generates the graph C*-algebra $C^*(G_D)$. We denote
$$
\A_n^C:= \overline{\Span}\{ \ S_{\lambda}S_{\lambda'}^* \ | \ \lambda,\lambda' \in E_C^n, \ r(\lambda) \in V \ \},
$$
$$
\B_n^C:= \overline{\Span}\{ \ S_{\lambda}S_{\lambda'}^* \ | \ \lambda,\lambda' \in E_C^n, \ r(\lambda) \in W \ \},
$$
$$
\A_n^D := \overline{\Span} \{ \ T_{\mu}T_{\mu'}^* \ | \ \mu, \mu' \in E_D^n , \ r(\mu) \in V \ \},
$$
$$
\B_n^D := \overline{\Span} \{ \ T_{\mu}T_{\mu'}^* \ | \ \mu, \mu' \in E_D^n , \ r(\mu) \in W \ \}.
$$
with direct limits $\A_{\infty}^C$, $\B_{\infty}^C$, $\A_{\infty}^D$ and $\B_{\infty}^D$. Then it follows that $\A_{\infty}^C \oplus \B_{\infty}^C$ is canonically isomorphic to the fixed point algebra $C^*(G_C)_0$ of $C^*(G_C)$ with its canonical gauge action, and that $\A_{\infty}^D \oplus \B_{\infty}^D$ is canonically isomorphic to the fixed point algebra $C^*(G_D)_0$ of $C^*(G_D)$ with its canonical gauge action.

Thinking of $X(A)$ and $X(B)$ as block diagonal C*-subcorrespondences of $X(C)$ in the natural way, we get that the Pimsner dilations $X(A)_{\infty}$ and $X(B)_{\infty}$ are naturally identified with the direct limits of C*-correspondences
$$
X(A)_n:= \overline{\Span}\{ \ S_{\lambda}S_{\lambda'}^* \ | \ \lambda \in E_C^{n+1} ,\lambda' \in E_C^n, \ s(\lambda) \in V \ \},
$$
which is a C*-correspondence from $\A_n^C$ to $\A_{n+1}^C$, and
$$
X(B)_n:= \overline{\Span}\{ \ S_{\lambda}S_{\lambda'}^* \ | \ \lambda \in E_C^{n+1} ,\lambda' \in E_C^n, \ s(\lambda) \in W \ \}
$$
which is a C*-correspondence from $\B_n^C$ to $\B_{n+1}^C$.

Similarly, thinking of $X(R)$ and $X(S)$ as the off-diagonal C*-subcorres\-pondences of $X(D)$ in the natural way, we get the C*-correspondences (which are only denoted as) $X(R)_{\infty}$ and $X(S)_{\infty}$ as the direct limits of C*-correspondences
$$
X(R)_n:= \overline{\Span}\{ \ T_{\mu}T_{\mu'}^* \ | \ \mu \in E_D^{n+1} ,\mu' \in E_D^n, \ s(\mu) \in V \ \}
$$
which is a C*-correspondence from $\B_n^D$ to $\A_{n+1}^D$, and
$$
X(S)_n:= \overline{\Span}\{ \ T_{\mu}T_{\mu'}^* \ | \ \mu \in E_D^{n+1} ,\mu' \in E_D^n, \ s(\mu) \in W \ \}
$$
which is a C*-correspondence from $\A_n^D$ to $\B_{n+1}^D$. Note however that $X(R)_{\infty}$ and $X(S)_{\infty}$ are not Pimsner dilations in the sense described in Section \ref{s:SE-CP}, because they are C*-correspondences over possibly different left and right coefficient C*-algebras. 

Recall that $X(D)_{\infty}$ is the C*-correspondence over $C^*(G_D)_0 = \A_{\infty}^D \oplus \B_{\infty}^D$ given by $X(D)_{\infty} = X(D) \cdot (\A_{\infty}^D \oplus \B_{\infty}^D)$, which is identified with the direct limit
$$
\overline{\Span}\{\ T_{\mu}T_{\mu'}^* \ | \ \mu \in E_D^{n+1} ,\mu' \in E_D^n, \ n \in \mathbb{N} \ \}.
$$ 
The $\B_{\infty}^D - \A_{\infty}^D$ correspondence $X(R)_{\infty}$ and the $\A_{\infty}^D - \B_{\infty}^D$ correspondence $X(S)_{\infty}$ coincide with the $1-2$ corner and $2-1$ corner (respectively) of the Pimsner dilation $X(D)_{\infty}$, and satisfy the equalities
$$
X(R)_{\infty} = X(R) \cdot \B^D_{\infty}, \ \ X(S)_{\infty} = X(S) \cdot \A^D_{\infty}.
$$

Now, using the map $\psi : E_D^2 \rightarrow E_C^m$, for each $k\in \mathbb{N}$ and $\mu_1,\cdots, \mu_k \in E_D^2$ such that $\mu_1\cdots \mu_k \in E_D^{2k}$ we may define $\psi_k : E_D^{2k} \rightarrow E_C^{mk}$ by setting $\psi_k(\mu_1 \cdots \mu_k) = \psi(\mu_1) \cdots \psi(\mu_k)$. This then gives rise to $*$-isomorphisms $\Psi^{A}_k :\A_{2k}^D \rightarrow \A_{mk}^C$ and $\Psi^{B}_k : \B_{2k}^D \rightarrow \B_{mk}^C$ by setting $\Psi^{A}_k(T_{\mu}T_{\mu_1'}^*) = S_{\psi_k(\mu)} S_{\psi_k(\mu')}^*$, and $\Psi^{B}_k(T_{\mu}T_{\mu'}^*) = S_{\psi_k(\mu)} S_{\psi_k(\mu')}^*$. Since these maps are compatible with direct limits, we obtain two $*$-isomor\-phisms $\Psi_{\infty}^A : \A_{\infty}^D \rightarrow \A_{\infty}^C$ and $\Psi_{\infty}^B : \B_{\infty}^D \rightarrow \B_{\infty}^C$ that we will use as identifications between the coefficient C*-algebras. For instance, this allows us to turn $X(R)_{\infty}$ into a $\B_{\infty}^C - \A_{\infty}^C$-bimodule, where the left and right actions are implemented via $\Psi_{\infty}^A$ and $\Psi_{\infty}^B$ respectively, and the inner product via $\Psi_{\infty}^B$.

\begin{Def} \label{d:smse}
Let $A$ and $B$ be essential matrices over $\mathbb{N}$ indexed by $V$ and $W$ respectively, with finitely supported rows. We say that $X(A)_{\infty}$ and $X(B)_{\infty}$ are \emph{strong Morita shift equivalent} if there are a lag $m\in \mathbb{N} \setminus \{0\}$ and matrices $R$ over $V\times W$ and $S$ over $W \times V$ over $\mathbb{N}$ together with path isomorphisms 
$$
\psi_A : E_R \times E_S \rightarrow E_A^m, \ \ \psi_B : E_S \times E_R \rightarrow E_B^m
$$ 
such that $X(R)_{\infty}$ is a strong Morita equivalence between $X(A)_{\infty}$ and $X(B)_{\infty}$, up to the identifications $\Psi_{\infty}^A$ and $\Psi_{\infty}^B$. More precisely, when $X(R)_{\infty}$ is considered as a $\B_{\infty}^C - \A_{\infty}^C$-bimodule via $\Psi_{\infty}^A$ and $\Psi_{\infty}^B$, there exists a unitary bimodule isomorphism $U: X(A)_{\infty} \otimes X(R)_{\infty} \rightarrow X(R)_{\infty} \otimes X(B)_{\infty}$ such that for every $a \in \A_{\infty}^C$, $b\in \B_{\infty}^C$ and $\xi \in X(A)_{\infty} \otimes X(R)_{\infty}$ we have
$$
U(a \cdot \xi \cdot b) = a \cdot U(\xi) \cdot b.
$$
\end{Def}

Henceforth, we will no longer belabor the point of distinguishing between $\A_{\infty}^C$ and $\A_{\infty}^D$ and between $\B_{\infty}^C$ and $\B_{\infty}^D$. However, we emphasize that this identification is important in the above definition, and depends on the choice of the maps $\psi_A$ and $\psi_B$. We have already seen that
$$
X(A)_{\infty} =X(A) \cdot \A_{\infty},  \ X(B)_{\infty} = X(B) \cdot \B_{\infty},
$$
$$
X(R)_{\infty} =X(R) \cdot \B_{\infty},  \ X(S)_{\infty} = X(S) \cdot \A_{\infty},
$$
so that by the above discussion and identifications using $\Psi_{\infty}^A$ and $\Psi_{\infty}^B$, as $\B_{\infty} - \A_{\infty}$ correspondences we may canonically identify
$$
X(A)_{\infty} \otimes_{\A_{\infty}} X(R)_{\infty} \cong X(A) \otimes_{\A} X(R) \cdot \B_{\infty},
$$
$$
X(R)_{\infty} \otimes_{\B_{\infty}} X(B)_{\infty} \cong X(R) \otimes_{\B} X(B) \cdot \B_{\infty}.
$$

\begin{Prop} \label{p:rse-implies-sme}
Let $A$ and $B$ be essential matrices over $\mathbb{N}$ indexed by $V$ and $W$ respectively, with finitely supported rows. Suppose $A$ and $B$ are representable shift equivalent with lag $m\in \mathbb{N}$ via $R$ and $S$, together with path isomorphisms $\phi$ and $\psi$. Then $X(A)_{\infty}$ and $X(B)_{\infty}$ are strong Morita shift equivalent with lag $m$ via $R,S$ and $\psi$. 
\end{Prop}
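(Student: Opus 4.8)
The plan is to produce, inside the common representation on $\mathcal{H}$ supplied by the representable shift equivalence, a concrete unitary witnessing that $X(R)_\infty$ is a strong Morita equivalence between $X(A)_\infty$ and $X(B)_\infty$ in the sense of Definition \ref{d:smse}. First I would record that $X(R)_\infty$ is an imprimitivity bimodule linking $\A_\infty$ and $\B_\infty$: since $A$ and $B$ are essential with finitely summing rows, so is $D$ (each row of $R$, resp.\ $S$, has finite sum because its entries are dominated by the corresponding row of $A^m=RS$, resp.\ $B^m=SR$, using that $S$, resp.\ $R$, has no zero rows), whence $X(D)$ is regular and full and its Pimsner dilation $X(D)_\infty$ is an imprimitivity bimodule over $C^*(G_D)^\gamma = \A_\infty \oplus \B_\infty$. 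The correspondence $X(R)_\infty$ is exactly its off-diagonal corner, and this corner is itself an imprimitivity bimodule between $\A_\infty$ and $\B_\infty$ (fullness of the corner coming from $RS=A^m$ and $SR=B^m$ being essential). This supplies the imprimitivity bimodule required by the definition of strong Morita equivalence, so the whole content is the construction of the connecting unitary.

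Next I would build the unitary
$$U : X(A)_\infty \otimes_{\A_\infty} X(R)_\infty \longrightarrow X(R)_\infty \otimes_{\B_\infty} X(B)_\infty.$$
By Proposition \ref{p:multip-adjacency} and the identifications established just before the proposition, the two sides are canonically $X(AR)\cdot \B_\infty$ and $X(RB)\cdot\B_\infty$, and the block $\phi_R : E_A \times E_R \to E_R \times E_B$ of $\phi$ witnesses the matrix equality $AR = RB$, hence induces a unitary $W : X(AR) \to X(RB)$. The force of representability is that $W$ is implemented on the nose by the operator relation \eqref{eq:phi}: whenever $\phi_R(ar) = r'b$ one has $S_a T_r = T_{r'} S_b$ as operators on $\mathcal{H}$. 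Realising the interior tensor products through the representation by the two multiplication maps
$$\mathrm{m}_1(\Xi \otimes H) = \Xi H, \qquad \mathrm{m}_2(H \otimes \Xi') = H \Xi',$$
with $\Xi \in X(A)_\infty$ and $\Xi' \in X(B)_\infty$ built from the $S$-family and $H \in X(R)_\infty$ built from the $T$-family, the identity $\mathrm{m}_1(e_a\otimes e_r)=S_aT_r=T_{r'}S_b=\mathrm{m}_2(e_{r'}\otimes e_b)$ shows $\mathrm{m}_1 = \mathrm{m}_2\circ W$ on generators, so $\mathrm{m}_1$ and $\mathrm{m}_2$ have the same closed range. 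I would then set $U := \mathrm{m}_2^{-1}\circ \mathrm{m}_1 = W\cdot\mathrm{id}_{\B_\infty}$.

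It then remains to verify that $U$ is a well-defined isometric surjection and a bimodule map. For isometry I would compute the module inner products through the representation: on elementary tensors $\langle \Xi_1\otimes H_1, \Xi_2\otimes H_2\rangle = H_1^*(\Xi_1^*\Xi_2)H_2 = (\Xi_1 H_1)^*(\Xi_2 H_2)$, so $\mathrm{m}_1$ (and symmetrically $\mathrm{m}_2$) preserves inner products provided the representation is faithful on the coefficient algebras $\A_\infty$ and $\B_\infty$. This faithfulness is automatic here: the cores are AF, their finite stages $\A_n^C$ and $\B_n^C$ decompose along vertices as direct sums of elementary (matricial) blocks, and since $C$ is essential every vertex is the range of a path of each length, so the condition $P_v \neq 0$ for all $v \in V \sqcup W$ forces injectivity on every finite stage and hence on the cores. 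The relation \eqref{eq:psi}, $T_{d_1 d_2} = S_{\psi(d_1d_2)}$, guarantees that this bookkeeping is compatible with the coefficient identifications $\Psi_\infty^A$ and $\Psi_\infty^B$, so that $U$ intertwines the $\A_\infty$- and $\B_\infty$-actions; surjectivity follows by rerunning the argument with $\phi_R^{-1}$, and the bimodule identity $U(a\,\xi\, b) = a\, U(\xi)\, b$ is immediate since both actions are by operator multiplication, which $W$ respects.

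The main obstacle is the passage in the second paragraph: reconciling the finite-level path-isomorphism unitary $W$ coming from $\phi_R$ with the inductive-limit (Pimsner dilation) structure of the $\infty$-correspondences and with the $\psi$-coefficient identifications $\Psi_\infty^A,\Psi_\infty^B$. It is precisely here that one cannot treat $\phi$ and $\psi$ in isolation, as in a bare shift equivalence, and must exploit that \eqref{eq:psi} and \eqref{eq:phi} hold \emph{simultaneously} in one representation; carrying this simultaneity cleanly through the direct-limit braiding, so that $W$ descends to the $\cdot\,\B_\infty$ completions, is the technical heart of the argument.
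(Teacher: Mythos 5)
Your proposal follows essentially the same route as the paper's proof: working inside the common representation on $\H$, you realize both interior tensor products as concrete operator spaces via multiplication maps (the paper's $U_{AR}$ and $U_{RB}$), use equation \eqref{eq:phi} to see that the two closed ranges coincide, and take the connecting unitary to be $U_{RB}^{-1}\circ U_{AR}$, with equation \eqref{eq:psi} supplying the coefficient identifications $\Psi_{\infty}^A$ and $\Psi_{\infty}^B$. Two points of divergence are worth recording. First, where one needs the representation to be faithful on the cores so that the concrete closed spans really are the Pimsner dilations, you argue this directly from $P_v\neq 0$ together with essentiality and the AF structure of the fixed-point algebras; the paper instead invokes Corollary \ref{c:cse-implies-rse} (whose stated hypothesis is compatible, not representable, shift equivalence), so your argument is both more elementary and better matched to the hypothesis actually assumed here. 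Second, you declare the bimodule identity ``immediate since both actions are by operator multiplication''; this is exactly where the paper spends the bulk of its proof, verifying by explicit path computations that the abstract left action of $\A_{\infty}$ on the inductive-limit correspondences agrees, under the $\psi$-identification, with operator multiplication in $B(\H)$ --- precisely the direct-limit compatibility you correctly flag in your final paragraph as the technical heart, but do not carry out. Your added verification that $X(R)_{\infty}$ is an imprimitivity bimodule (needed for Definition \ref{d:smse} to apply at all) is correct and makes explicit a step the paper leaves to the discussion preceding that definition.
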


\begin{proof}
By Corollary \ref{c:cse-implies-rse} we have a Cuntz-Krieger family $(P_v,S_c)$ for $G_C$ generating $C^*(G_C)$ and a Cuntz-Krieger family $(P_v,T_d)$ for $G_D$ generating $C^*(G_D)$ on the same Hilbert space $\mathcal{H}$, satisfying equations \eqref{eq:psi} and \eqref{eq:phi}. Thus, we are in the context of the discussion above.

By the identifications preceding the theorem, we may define maps 
$$
U_{AR} : X(A)_{\infty} \otimes_{\A_{\infty}} X(R)_{\infty} \rightarrow X(A) \cdot X(R) \cdot \B_{\infty},
$$
$$
U_{RB} : X(R)_{\infty} \otimes_{\B_{\infty}} X(B)_{\infty} \rightarrow X(R) \cdot X(B) \cdot \B_{\infty},
$$ 
(where the notation $X(A) \cdot X(R) \cdot \B_{\infty}$ and $X(R) \cdot X(B) \cdot \B_{\infty}$ are understood as the closed linear span of products) by setting
$$
U_{AR}(S_a \otimes T_r \cdot w) = S_aT_r \cdot w, \ \ \text{and} \ \ U_{RB}(T_r\otimes S_b \cdot w) = T_rS_b \cdot w.
$$
for $a\in E_A, r \in E_r, b\in E_B$ and $w \in \B_{\infty}$. It is straightforward to show that $U_{AR}$ and $U_{RB}$ are well-defined unitary right $\B_{\infty}$-module maps. Thus, we are left with showing that $U_{AR}$ and $U_{RB}$ are left $\A_{\infty}$-module maps. 

We first show that $U_{AR}$ is a left $\A_{\infty}$-module map. We let $S_{\lambda}S_{\lambda'}^* \in \A_{\infty}$ for $\lambda,\lambda' \in E_A^{mk}$, and note that it will suffice to show that for $a\in E_A$ and $r\in E_R$ we have,
$$
U_{AR}(S_{\lambda}S_{\lambda'}^*(S_a \otimes T_r)) = S_{\lambda}S_{\lambda'}^*S_a T_r.
$$ 
To prove this, let $e\in E_A$ be some edge so that $s(e) = r(\lambda)=r(\lambda')$, and suppose that $\lambda = a_1 \nu$ and $\lambda' = a_1' \nu'$ for $a_1,a_1' \in E_A$. Write $\nu e = \psi(r_1s_1)\cdots \psi(r_ks_k)$ and $\nu'e = \psi(r_1's_1')\cdots \psi(r_k's_k')$ for $r_i,r_i' \in E_R$ and $s_i,s_i' \in E_S$. Then, we have 

\begin{flalign*}
& S_{\lambda}S_{\lambda'}^*S_a \otimes T_r = \delta_{a,a_1'} \cdot S_{a_1}S_{\nu}S_{\nu'}^* \otimes T_r = \delta_{a,a_1'} \cdot S_{a_1} \sum_{e\in s^{-1}(r(\lambda))}S_{\nu e}S_{\nu' e}^* \otimes T_r = &\\
& \delta_{a,a_1'} \cdot S_{a_1} \otimes \sum_{e\in s^{-1}(r(\lambda))} T_{r_1s_1\cdots r_ks_k}T_{r_1's_1'\cdots r_k's_k'}^* T_r = &\\
& \delta_{a,a_1'} \cdot S_{a_1} \otimes T_{r_1} \sum_{e\in s^{-1}(r(\lambda))} \delta_{r_1',r}\cdot T_{s_1\cdots r_ks_k}T_{s_1'\cdots r_k's_k'}^* = &\\
&\delta_{a,a_1'} \cdot S_{a_1} \otimes T_{r_1} \sum_{e\in s^{-1}(r(\lambda))} \delta_{r_1',r} \cdot \sum_{f \in s^{-1}(r(\lambda e))} S_{\psi_k^{-1}(s_1\cdots r_ks_k f)}S_{\psi_k^{-1}(s_1'\cdots r_k's_k' f)},
\end{flalign*} 
where in the above calculations we used the identifications via $\Psi_{\infty}^A$ and $\Psi_{\infty}^B$. Essentially the same calculation, using equation \eqref{eq:psi} instead, will show that
$$
S_{\lambda}S_{\lambda'}^*S_a T_r = 
$$
$$
\delta_{a,a_1'} \cdot S_{a_1} T_{r_1} \sum_{e\in s^{-1}(r(\lambda))} \delta_{r_1,r} \cdot \sum_{f \in s^{-1}(r(\lambda e))} S_{\psi_k^{-1}(s_1\cdots r_ks_k f)}S_{\psi_k^{-1}(s_1'\cdots r_k's_k' f)}.
$$
Thus, we see that $U_{AR}$ is a left $\A_{\infty}$-module map. 

Next, we show that $U_{RB}$ is a left $\A_{\infty}$-module map. We let $S_{\lambda}S_{\lambda'}^* \in \A_{\infty}$ with $\lambda,\lambda' \in E_A^{mk}$, and note that it will suffice to show that for $r\in E_R$ and $b\in E_B$ we have,
$$
U_{RB}(S_{\lambda}S_{\lambda'}^*T_r \otimes S_b) = S_{\lambda}S_{\lambda'}^*T_r S_b.
$$ 
Write $\lambda = \psi(r_1s_1) \cdots \psi(r_ks_k)$ and $\lambda' = \psi(r_1's_1')\cdots \psi(r_k's_k')$, and then further write $\psi(s_1r_2) = b_1 \sigma$ and $\psi(s_1'r_2') = b_1'\sigma'$ for $b_1,b_1' \in E_B$. Then, we have
$$
S_{\lambda}S_{\lambda'}^* T_r \otimes S_b = T_{r_1s_1 \cdots r_ks_k} T_{r_1's_1'\cdot r_k's_k'}^* T_r \otimes S_b =
$$
$$
\delta_{r_1',r} T_{r_1} \sum_{r \in r^{-1}(\lambda)} T_{s_1r_2 \cdots s_k r} T_{s_1'r_2' \cdots s_k'r}^* \otimes S_b =
$$
$$
\delta_{r_1',r} T_{r_1} \otimes \sum_{r \in r^{-1}(\lambda)} S_{\psi(s_1r_2) \cdots \psi (s_k r)}S_{\psi(s_1'r_2') \cdots \psi (s_k' r)}^*S_b =
$$
$$
\delta_{r_1',r} \delta_{b_1',b} \cdot T_{r_1} \otimes S_{b_1} \sum_{r \in r^{-1}(\lambda)}S_{\sigma \psi (s_2r_3) \cdots \psi(s_kr)} S_{\sigma' \psi(s_2'r_3') \cdots \psi(s_k'r)}^*,
$$
where in the above calculation we used the identifications via $\Psi_{\infty}^A$ and $\Psi_{\infty}^B$. Essentially the same calculation, using equation \eqref{eq:psi} instead, will show that
$$
S_{\lambda}S_{\lambda'}^* T_r S_b = 
$$
$$
\delta_{r_1',r} \delta_{b_1',b} \cdot T_{r_1} S_{b_1} \sum_{r \in r^{-1}(\lambda)}S_{\sigma \psi (s_2r_3) \cdots \psi(s_kr)} S_{\sigma' \psi(s_2'r_3') \cdots \psi(s_k'r)}^*.
$$
Thus, we see that $U_{BR}$ is a left $\A_{\infty}$-module map. 

Thus, to conclude the proof we need only show the equality
$$
X(A) \cdot X(R) \cdot \B_{\infty} = X(R) \cdot X(B) \cdot \B_{\infty}.
$$
However, it is clear from equation \eqref{eq:phi} that $X(A) \cdot X(R) = X(R) \cdot X(B)$, so we are done. Thus, the map $U_{RB}^{-1} \circ U_{AR}$ is a unitary isomorphism, showing that $X(R)_{\infty}$ is a strong Morita equivalence between $X(A)_{\infty}$ and $X(B)_{\infty}$.
\end{proof}

Suppose $A$ and $B$ are matrices indexed by $V$ and $W$ respectively, and suppose there are matrices $R$ and $S$, so that $C$ and $D$ are as described in the previous section. Suppose we have a path isomorphism $\psi : E_D^2 \rightarrow E_C^m$. 

Assume we have a unitary $U: X(A)_{\infty} \otimes X(R)_{\infty} \rightarrow X(R)_{\infty} \otimes X(B)_{\infty}$ such that for every $\xi \in X(A)_{\infty} \otimes X(R)_{\infty}$ and every $a \in \A_{\infty}$ and $b\in \B_{\infty}$ we have $U(b \xi a) = b U(\xi) a$. We refer the reader to \cite{AEE98} and \cite[Section 2]{MS00} for the basic theory of Morita equivalence, linking algebras and crossed products by Hilbert bimodules that we shall need in what follows. We denote by $\L$ the linking algebra of $X(R)_{\infty}$ given by
$$
\L:= \begin{bmatrix} 
		\A_{\infty} & X(R)_{\infty} \\
		X(R)_{\infty}^* & \B_{\infty} 
		\end{bmatrix},
$$
and by $W$ the $\L$-imprimitivity bimodule
$$
W:= \begin{bmatrix}
		X(A)_{\infty} & X(R)_{\infty} \otimes_{\B_{\infty}} X(B)_{\infty} \\
		X(R)_{\infty}^*\otimes_{\A_{\infty}} X(A)_{\infty} & X(B)_{\infty} 
		\end{bmatrix}.
$$
It is shown in the proof of \cite[Theorem 4.2]{AEE98} that there are two complementary and full projections $p_A, p_B \in \M(\O(W))$ (corresponding to the $2\times 2$ block form of $W$) together with two $*$-isomorphisms $\varphi_E : \O(X(A)_{\infty}) \rightarrow p_A \O(W)p_A$ and $\varphi_F : \O(X(B)_{\infty}) \rightarrow p_B \O(W) p_B$ given as follows. For $\xi \in X(A)_{\infty}$ and $\eta \in X(B)_{\infty}$ we let 
$$
\widetilde{\xi} := \begin{bmatrix} \xi & 0 \\
		0 & 0 
		\end{bmatrix} \ \ \text{and} \ \ \widetilde{\eta}:=\begin{bmatrix} 0 & 0 \\
		0 & \eta 
		\end{bmatrix},
		$$ 
so that the maps $\varphi_E$ and $\varphi_F$ are given by
$$
\varphi_E(S_{\xi}) = S_{\widetilde{\xi}} \ \ \text{and} \ \ \varphi_F(S_{\eta}) = S_{\widetilde{\eta}}.
$$
In particular, we see that $\varphi_E$ and $\varphi_F$ are gauge equivariant.

Since $W$ is an imprimitivity bimodule, again by \cite[Theorem 2.9]{AEE98} we get that $\O(W)_0 \cong \mathcal{L}$. In particular, we get that up to the same identification, the direct sum of diagonal subalgebras $\mathcal{D}_{E_A} \oplus \mathcal{D}_{E_B}$, according to $p_A, p_B \in \M(\O(W))$, coincides with
$$
\mathcal{D}_W:= \overline{\Span}\{ \ S_{\zeta}S_{\zeta'}^* \ | \ \zeta, \zeta' \in W^{\otimes n}, \ n \in \mathbb{N} \ \}.
$$
Hence, we get that $\varphi_E(\mathcal{D}_{E_A}) = p_A \mathcal{D}_W p_A$ and $\varphi_F(\mathcal{D}_{E_B}) = p_B \mathcal{D}_W p_B$. Let 
$$
N(\mathcal{D}_W) = \{ \ n \in \O(W) \ | \ n^*\mathcal{D}_W n , \ n \mathcal{D}_W n^* \subseteq \mathcal{D}_W \ \}
$$ 
be the set of normalizers of $\mathcal{D}_W$, and denote by
$$
N_{\star}(\mathcal{D}_W) = \{ \ n \in N(\mathcal{D}_W) \ | \ \exists k \in \mathbb{Z}, \ \gamma_z(n) = z^k n \ \}
$$ 
the \emph{homogeneous} normalizers of $\mathcal{D}_W$.

\begin{Prop} \label{p:x(r)-implies-esdi}
Let $A$ and $B$ be essential matrices over $\mathbb{N}$, indexed by $V$ and $W$ respectively, with finitely supported rows. Suppose that $X(A)_{\infty}$ and $X(B)_{\infty}$ are strong Morita shift equivalent. Then there is an equivariant $*$-isomorphism $\varphi : C^*(G_A) \otimes \bbK \rightarrow C^*(G_B) \otimes \bbK$ with $\varphi(\mathcal{D}_{E_A} \otimes c_0) = \mathcal{D}_{E_B} \otimes c_0$.
\end{Prop}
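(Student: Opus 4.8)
The plan is to feed the strong Morita shift equivalence into the linking-algebra machinery set up just before the statement, reduce the problem to a stable isomorphism between two corners of $\O(W)$ cut by full complementary projections, and then produce that stable isomorphism in a way that respects both the gauge grading and the diagonal. First I would unpack Definition \ref{d:smse}: after the identifications $\Psi_{\infty}^A$ and $\Psi_{\infty}^B$ it supplies a unitary $U : X(A)_{\infty} \otimes X(R)_{\infty} \to X(R)_{\infty} \otimes X(B)_{\infty}$ intertwining the $\A_{\infty}$- and $\B_{\infty}$-actions, which is exactly the hypothesis of the discussion preceding the proposition. Thus \cite[Theorem 4.2]{AEE98} gives $\O(W)$ together with full complementary projections $p_A, p_B \in \M(\O(W))$, gauge-equivariant $*$-isomorphisms $\varphi_E : \O(X(A)_{\infty}) \to p_A\O(W)p_A$ and $\varphi_F : \O(X(B)_{\infty}) \to p_B\O(W)p_B$, and the diagonal matchings $\varphi_E(\mathcal{D}_{E_A}) = p_A\mathcal{D}_W p_A$ and $\varphi_F(\mathcal{D}_{E_B}) = p_B\mathcal{D}_W p_B$. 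Since $A$ and $B$ are essential with finitely summing rows, $X(A)$ and $X(B)$ are regular and full, so by the Pimsner dilation results of Section \ref{s:SE-CP} there are equivariant identifications $C^*(G_A) = \O(X(A)) \cong \O(X(A)_{\infty})$ and $C^*(G_B) = \O(X(B)) \cong \O(X(B)_{\infty})$ carrying $\mathcal{D}_{E_A}$ and $\mathcal{D}_{E_B}$ to the respective diagonals. Composing these with $\varphi_E$ and $\varphi_F$ reduces the whole statement to producing an equivariant $*$-isomorphism $p_A\O(W)p_A \otimes \bbK \to p_B\O(W)p_B \otimes \bbK$ carrying $p_A\mathcal{D}_W p_A \otimes c_0$ onto $p_B\mathcal{D}_W p_B \otimes c_0$.

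The heart of the argument is this last reduction, where I would exploit that $p_A$ and $p_B$ are full, complementary, gauge-invariant, and multipliers of $\mathcal{D}_W$. Fullness means $\overline{\O(W)p_A\O(W)} = \O(W) = \overline{\O(W)p_B\O(W)}$, and $\O(W)$ is $\sigma$-unital because the coefficient algebras are countable, so a Brown-type stabilization applies; the point is to run it through the diagonal. Concretely I would use the homogeneous normalizers $N_{\star}(\mathcal{D}_W)$ introduced above to build a countable family $(n_i) \subseteq N_{\star}(\mathcal{D}_W)$ with $n_i^* n_i \in p_A\mathcal{D}_W p_A$, $n_i n_i^* \in p_B\mathcal{D}_W p_B$, and $\sum_i n_i n_i^* = p_B$ strictly, whose associated matrix-unit system implements a $*$-isomorphism between the two stabilized corners. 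Homogeneity of each $n_i$ forces the isomorphism to be gauge-equivariant, while the normalizer conditions $n_i^*\mathcal{D}_W n_i,\ n_i\mathcal{D}_W n_i^* \subseteq \mathcal{D}_W$ force it to carry diagonal onto diagonal. Equivalently, one may model $(\O(W), \mathcal{D}_W, \gamma)$ by a graded \'etale groupoid $\mathcal{G}_W$ in which $p_A, p_B$ correspond to full clopen subsets $U_A, U_B$ of $\mathcal{G}_W^{(0)}$, observe that the reductions $\mathcal{G}_W|_{U_A}$ and $\mathcal{G}_W|_{U_B}$ become isomorphic as graded groupoids after multiplying by the full countable equivalence relation, and invoke the groupoid characterization of equivariant diagonal-preserving stable isomorphisms in the spirit of \cite{CRST17}.

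I expect the main obstacle to be precisely this diagonal-and-gauge-preserving version of the stabilization step: the classical Brown stable isomorphism for full projections does not by itself respect a Cartan subalgebra or a circle action, so the real work is in verifying that $\mathcal{D}_W$ is a genuine diagonal of $\O(W)$ whose homogeneous normalizers densely span, and that the connecting family $(n_i)$ can be selected inside $N_{\star}(\mathcal{D}_W)$ while still summing strictly to $p_B$ by the fullness of $p_A$. Once such homogeneous normalizers are produced compatibly with the fullness of $p_A$ and $p_B$, equivariance and diagonal-preservation of the resulting stable isomorphism follow formally, and assembling it with the identifications of the first paragraph concludes the proof.
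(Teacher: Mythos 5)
Your overall architecture coincides with the paper's: both pass to the linking algebra $\O(W)$ of $X(R)_{\infty}$, use the full complementary projections $p_A,p_B$ and the gauge-equivariant corner identifications from \cite[Theorem 4.2]{AEE98}, and then appeal to the graded-groupoid reconstruction machinery of \cite[Corollary 11.3]{CRST17}. The paper invokes precisely the implication $(7)\implies(8)$ of that corollary, which already packages the diagonal- and gauge-preserving stabilization you flag as the main obstacle; your hand-rolled family $(n_i)\subseteq N_{\star}(\mathcal{D}_W)$ with $\sum_i n_in_i^*=p_B$ is therefore not needed, and constructing it directly would amount to reproving part of that result.

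The genuine gap is at the step you defer as ``the real work'': verifying that $p_A\O(W)p_B$ is the closed linear span of $p_AN_{\star}(\mathcal{D}_W)p_B$. This is not a routine Cartan-type verification. It is the only place where the hypothesis of strong Morita \emph{shift} equivalence, as opposed to mere strong Morita equivalence, enters, and your proposal never touches the data that makes it true, namely the path isomorphisms $\psi_A,\psi_B$ and the concrete realization of $X(R)_{\infty}$ as a limit of the spaces $X(R)_n$ spanned by elements $T_{\mu}T_{\mu'}^*$. The paper decomposes $p_A\O(W)p_B$ into its graded pieces via \cite[Theorem 2.9]{AEE98}, observes that each piece is spanned by elements of the form $S_{\lambda}S_{\lambda'}^*S_{\alpha}T_r$ or $S_{\lambda}S_{\lambda'}^*S_{\alpha}^*T_r$, and reduces everything to the two inclusions $T_r\mathcal{D}_{E_B}T_r^*\subseteq\mathcal{D}_{E_A}$ and $T_r^*\mathcal{D}_{E_A}T_r\subseteq\mathcal{D}_{E_B}$, which are then checked by an explicit computation with the maps $\psi_k$. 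Without this computation the argument cannot close: if the normalizer property held for an arbitrary strong Morita equivalence between $X(A)_{\infty}$ and $X(B)_{\infty}$, then your argument combined with Theorem \ref{t:down-and-up} and Theorem \ref{t:main} would show that shift equivalence implies strong shift equivalence for aperiodic irreducible matrices, contradicting the Kim--Roush examples as recorded in Theorem \ref{t:cutoff}. So the deferred verification is exactly the mathematical content of the proposition, and it must be carried out using the specific generators $T_r$ and the identifications induced by $\psi$.
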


\begin{proof}
We are in the situation where we can apply the implication $(7) \implies (8)$ in \cite[Corollary 11.3]{CRST17}. By the description of $C^*(G_A) \cong C^*(\G_{E_A})$ and $C^*(G_B) \cong C^*(\G_{E_B})$ as groupoid C*-algebras, we get that item $(8)$ in \cite[Corollary 11.3]{CRST17} is equivalent to $C^*(G_A)$ and $C^*(G_B)$ being stably equivariant diagonal-isomorphic. Indeed, this is because $\mathbb{Z}$-coactions correspond to topological $\mathbb{Z}$-gradings by \cite[Remark 6]{Rae18}, which in turn correspond to $\mathbb{T}$-actions by \cite[Theorem 3]{Rae18}. This means that the second equality in item $(8)$ of \cite[Corollary 11.3]{CRST17} is equivalent to equivariance of the isomorphism.

Thus, to prove our result it will suffice to prove item $(7)$ in \cite[Corollary 11.3]{CRST17}. Everything is set up in the discussion preceding the proposition, except for one thing. We are left with showing that $p_A \O(W) p_B$ is the closed linear span of $p_A N_*(\mathcal{D}_W) p_B$. However, we know that $\O(W)$ is the closed linear span of its graded subspaces $\O(W)_n$ for $n\in \mathbb{Z}$. Hence, $p_A \O(W) p_B$ is the closed linear span of $p_A\O(W)_np_B$. By \cite[Theorem 2.9]{AEE98} we have for all $n \in \mathbb{Z}$ that
\begin{enumerate}
\item
$\O(W)_n \cong W^{\otimes n}$ for $n > 0$,

\item
$\O(W)_0 \cong \mathcal{L}$, and

\item
$\O(W)_n \cong (W^{\otimes n})^*$ for $n < 0$.
\end{enumerate}

Thus, we are left with showing that each $p_A \O(W)_n p_B$ is the closed linear span of its normalizers (which are automatically $n$-homogeneous). 

For $n=0$ we get that $p_A \O(W)_0 p_B \cong X(R)_{\infty}$, for $n > 0$ we get that $p_A \O(W)_n p_B \cong X(A)_{\infty}^n \otimes X(R)_{\infty}$, and for $n< 0$ we get that $p_A \O(W)_n p_B \cong (X(A)_{\infty}^*)^n \otimes X(R)_{\infty}$.

Thus, $p_A \O(W)_n p_B$ is the closed linear span of elements of the form $S_{\lambda}S_{\lambda'}^* S_{\alpha} T_r$ (for $n \geq 0$) or $S_{\lambda}S_{\lambda'}^* S_{\alpha}^* T_r$ (for $n<0$) for $\lambda,\lambda' \in E_A^{\ell}$, $\alpha \in E_A^{|n|}$ and $r\in E_R$ for some $\ell \in \mathbb{N}$. In order to show that these elements are normalizers for $\mathcal{D}_W$, and since $S_{\lambda}S_{\lambda'}^* S_{\alpha}, S_{\lambda}S_{\lambda'}^* S_{\alpha}^* \in N(\mathcal{D}_{E_A})$, it will suffice to show that $T_r\mathcal{D}_{E_B} T_r^* \subseteq \mathcal{D}_{E_A}$ and that $T_r^* \mathcal{D}_{E_A}T_r \subseteq \mathcal{D}_{E_B}$ for $r\in E_R$.

We know already from the inclusions $\mathcal{D}_{E_A} \subseteq \A_{\infty}$ and $\mathcal{D}_{E_B} \subseteq \B_{\infty}$ that $\mathcal{D}_{E_A}$ is the closed linear span of elements of the form 
$$
S_{\psi_k(r_1s_1 \cdots r_ks_k)}S_{\psi_k(r_1s_1\cdots r_ks_k)}^*
$$ 
for paths $r_1s_1\cdots r_ks_k \in E_D^{2k}$ with $r_i \in E_R$ and $s_i \in E_S$, and that $\mathcal{D}_{E_B}$ is the closed linear span of elements of the form 
$$
S_{\psi_k(s_1r_1 \cdots s_kr_k)}S_{\psi_k(s_1r_1\cdots s_kr_k)}^*
$$ for paths $s_1r_1\cdots s_kr_k \in E_D^{2k}$ with $r_i \in E_R$ and $s_i \in E_S$.
So for such paths we compute,
$$
T_rS_{\psi_k(s_1r_1 \cdots s_kr_k)}S_{\psi_k(s_1r_1 \cdots s_kr_k)}^* T_r^* = \sum_{s\in E_S} T_rT_{s_1r_1\cdots s_kr_k s}T_{s_1r_1\cdots s_kr_ks}^* T_r^*
$$
$$
= \sum_{s\in E_S} S_{\psi_{k+1}(rs_1r_1\cdots s_kr_ks)}S_{\psi_{k+1}(rs_1r_1\cdots s_kr_ks)}^*
$$
is in $\mathcal{D}_{E_B}$. On the other hand, 
$$
T_r^*S_{\psi_k(r_1s_1 \cdots r_ks_k)}S_{\psi_k(r_1s_1\cdots r_ks_k)}^* T_r = \delta_{r,r_1} \cdot \sum_{r \in E_R} S_{\psi_k(s_1 \cdots r_ks_kr)}S_{\psi_k(s_1\cdots r_ks_kr)}^*
$$
is in $\mathcal{D}_{E_A}$. Thus, $T_r\mathcal{D}_{E_B} T_r^* \subseteq \mathcal{D}_{E_A}$ and $T_r^* \mathcal{D}_{E_A}T_r \subseteq \mathcal{D}_{E_B}$ as required.
\end{proof}

\section{Shift equivalences through the lens} \label{s:final}

In this section we orient various equivalence relations between strong shift equivalence and shift equivalence. We will assume some familiarity with crossed product C*-algebras and K-theory of C*-algebras. We recommend \cite{Wil07} for the basic theory of crossed product C*-algebras, \cite{LLR00} for the basic K-theory for C*-algebras, and especially \cite[Chapter 7]{Rae05} for K-theory and crossed products of graph algebras by their gauge actions.

Suppose now that $G=(V,E)$ is a directed graph, and let $\gamma$ be the gauge unit circle action on $C^*(G)$. We denote by $\widehat{\gamma}$ the dual $\mathbb{Z}$ action on $C^*(G) \rtimes_{\gamma} \mathbb{T}$. By \cite[Lemma 2.75]{Wil07} we have that $[C^*(G)\rtimes \bbT] \otimes \mathbb{K}$ and $[C^*(G)\otimes \mathbb{K}] \rtimes \bbT$ are $*$-isomorphic via the map $f \otimes K \mapsto f \cdot K$ defined for $f\in C(\bbT; C^*(G))$ and $K\in \mathbb{K}$, and that this map intertwines the action $\widehat{\gamma}^G \otimes \id$ with the dual action $(\gamma^G\otimes \id)^{\widehat{\mbox{}}}$. Using this equivariant identification together with \cite[Corollary 2.48]{Wil07} we obtain the following standard fact which we leave for the reader to verify.

\begin{Prop} \label{p:stequivariant-implies-stequivariant}
Let $G$ and $G'$ be directed graphs, and suppose there exists a $*$-isomorphism $\varphi: C^*(G) \otimes \mathbb{K} \rightarrow C^*(G') \otimes \mathbb{K}$ such that $\varphi \circ (\gamma^G \otimes \id)_z = (\gamma^{G'}\otimes \id)_z \circ \varphi$ for all $z\in \bbT$. Then there exists a $*$-isomorphism $\varphi \rtimes \id : [C^*(G)\rtimes_{\gamma^G} \bbT] \otimes \mathbb{K} \rightarrow [C^*(G')\rtimes_{\gamma^G} \bbT] \otimes \mathbb{K}$ such that $[\varphi \rtimes \id] \circ [\widehat{\gamma}^G \otimes \id] = [\widehat{\gamma}^{G'} \otimes \id] \circ [\varphi \rtimes \id]$.
\end{Prop}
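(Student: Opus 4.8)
The plan is to obtain the isomorphism by applying the functoriality of the full crossed product to the equivariant isomorphism $\varphi$, and then transporting the result along the two identifications furnished by \cite[Lemma 2.75]{Wil07} that were recalled just above. Throughout, $\bbZ = \widehat{\bbT}$ acts on each crossed product through the relevant dual action, and the whole point is that every map we produce is equivariant for these dual actions, so that equivariance for $\widehat{\gamma}^G \otimes \id$ and $\widehat{\gamma}^{G'} \otimes \id$ falls out of chaining the equivariances together.

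First I would invoke the functoriality statement \cite[Corollary 2.48]{Wil07}. Since $\varphi$ is an isomorphism of the $\bbT$-dynamical systems $(C^*(G) \otimes \bbK, \gamma^G \otimes \id)$ and $(C^*(G') \otimes \bbK, \gamma^{G'} \otimes \id)$, it induces a $*$-isomorphism
$$
\varphi \rtimes \id : [C^*(G) \otimes \bbK] \rtimes_{\gamma^G \otimes \id} \bbT \longrightarrow [C^*(G') \otimes \bbK] \rtimes_{\gamma^{G'} \otimes \id} \bbT,
$$
determined on $C_c(\bbT, C^*(G) \otimes \bbK)$ by $(\varphi \rtimes \id)(f)(z) = \varphi(f(z))$. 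Because $\bbT$ is abelian, the dual action of $n \in \bbZ$ acts on such an $f$ by pointwise multiplication by the scalar $z^n$, which depends only on $(n,z)$; hence $\varphi(z^n f(z)) = z^n \varphi(f(z))$ shows that $\varphi \rtimes \id$ automatically intertwines $(\gamma^G \otimes \id)^{\widehat{\mbox{}}}$ with $(\gamma^{G'} \otimes \id)^{\widehat{\mbox{}}}$.

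Next I would bring in \cite[Lemma 2.75]{Wil07}, which supplies $*$-isomorphisms
$$
\Theta_G : [C^*(G) \rtimes_{\gamma^G} \bbT] \otimes \bbK \longrightarrow [C^*(G) \otimes \bbK] \rtimes_{\gamma^G \otimes \id} \bbT
$$
intertwining $\widehat{\gamma}^G \otimes \id$ with $(\gamma^G \otimes \id)^{\widehat{\mbox{}}}$, and likewise $\Theta_{G'}$ for $G'$. The desired map is then the composite
$$
\Theta_{G'}^{-1} \circ (\varphi \rtimes \id) \circ \Theta_G : [C^*(G) \rtimes_{\gamma^G} \bbT] \otimes \bbK \longrightarrow [C^*(G') \rtimes_{\gamma^{G'}} \bbT] \otimes \bbK,
$$
which (by the slight abuse of notation in the statement) is what we call $\varphi \rtimes \id$. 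Reading the three equivariances off along this composite — $\Theta_G$ turns $\widehat{\gamma}^G \otimes \id$ into $(\gamma^G \otimes \id)^{\widehat{\mbox{}}}$, the middle map carries this to $(\gamma^{G'} \otimes \id)^{\widehat{\mbox{}}}$, and $\Theta_{G'}^{-1}$ returns it to $\widehat{\gamma}^{G'} \otimes \id$ — yields exactly $[\varphi \rtimes \id] \circ [\widehat{\gamma}^G \otimes \id] = [\widehat{\gamma}^{G'} \otimes \id] \circ [\varphi \rtimes \id]$.

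Since the statement is flagged as a standard fact, I do not expect a genuine obstacle: the only real content is the functoriality of the crossed product and the naturality of the dual action, both already contained in the cited results. The one point demanding care is purely bookkeeping, namely keeping the two families of dual actions $\widehat{\gamma}^{\bullet} \otimes \id$ and $(\gamma^{\bullet} \otimes \id)^{\widehat{\mbox{}}}$ notationally distinct and checking that each of the three arrows in the composite transports the correct dual action to the next, so that the outer composite genuinely intertwines $\widehat{\gamma}^G \otimes \id$ with $\widehat{\gamma}^{G'} \otimes \id$.
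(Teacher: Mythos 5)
Your argument is correct and follows exactly the route the paper indicates: the paper itself only sketches this proposition (citing \cite[Lemma 2.75]{Wil07} for the equivariant identification $[C^*(G)\rtimes\bbT]\otimes\bbK \cong [C^*(G)\otimes\bbK]\rtimes\bbT$ and \cite[Corollary 2.48]{Wil07} for functoriality of the crossed product) and leaves the verification to the reader, which is precisely the composite $\Theta_{G'}^{-1}\circ(\varphi\rtimes\id)\circ\Theta_G$ you construct. Your explicit check that $\varphi\rtimes\id$ intertwines the dual actions on $C_c(\bbT,\cdot)$ is the one small computation the paper omits, and it is done correctly.
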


Let $A$ be a finite essential matrix with entries in $\mathbb{N}$. We denote by $(D_A,D_A^+)$ the inductive limit of the following inductive system of ordered abelian groups acting on columns
\[\begin{CD}
(\bbZ^V,\bbZ^V_+) @>A^T>> (\bbZ^V,\bbZ^V_+) @>A^T>> (\bbZ^V,\bbZ^V_+) @>A^T>> \cdots
\end{CD}\]
and let $d_A: D_A \to D_A$ be the homomorphism induced by the diagram
\[\begin{CD}
	\bbZ^V @>A^T>> \bbZ^V @>A^T>> \bbZ^V @>A^T>> \cdots D_A \\
	@VVA^T V @VVA^T V @VVA^T V @VV d_AV  \\
	\bbZ^V @>A^T>> \bbZ^V @>A^T>> \bbZ^V @>A^T>> \cdots D_A
\end{CD}\]
The triple $(D_A,D_A^+, d_A)$ is called the \emph{dimension group triple} of $A$. 

Let $H_i$ be an abelian group, $H_i^+$ a submonoid of $H_i$ and $\alpha_i$ is an automorphism of $H_i$ for $i=1,2$. We say two triples $(H_i,H_i^+,\alpha_i)$ are isomorphic if there is a group isomorphism $\phi:H_1\to H_2$ such that $\phi(H_1^+)=H_2^+$ and $\phi\circ\alpha_1=\alpha_2\circ\phi$. 

From work of Wagoner \cite[Corollary 2.9]{Wag88} we have that $(D_A,D_A^+, d_A)$ is isomorphic to a dimension group triple constructed from the edge shift $(X_{E_A},\sigma_{E_A})$ associated to $A$. On the other hand, the dimension group triple $(D_A,D_A^+, d_A)$ also coincides with a K-theory triple arising from the crossed product C*-algebra $C^*(G_A)\rtimes_\ga \mathbb{T}$ where $\ga$ is the gauge unit circle action. More precisely, noting that arrow directions in \cite{Rae05} are reversed to ours, it follows from \cite[Corollary 7.14]{Rae05} together with the discussion preceding \cite[Lemma 7.15]{Rae05} that the triple $(D_A,D_A^+, d_A)$ is isomorphic to the triple 
$$
(K_0(C^*(G_A)\rtimes_\ga\bbT),K_0(C^*(G_A)\rtimes_\ga\bbT)^+,K_0(\hga_1)^{-1}),
$$
where $\hga$ is the dual action of $\mathbb{Z}$ on $C^*(G_A)\rtimes_\ga\bbT$. The implication $(3) \implies (4)$ below is what we referred to as \emph{Krieger's corollary} in the introduction.

\begin{Cor} \label{c:intermediates}
Suppose now that $A$ and $B$ are two finite essential matrices with entries in $\mathbb{N}$. The former conditions imply the latter
\begin{enumerate}

\item $A$ and $B$ are strong shift equivalent.

\item $C^*(G_A)$ and $C^*(G_B)$ are equivariantly stably isomorphic in a way that respects the diagonals.

\item $C^*(G_A)$ and $C^*(G_B)$ are equivariantly stably isomorphic.

\item $A$ and $B$ are shift equivalent.
\end{enumerate}
\end{Cor}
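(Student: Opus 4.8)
The plan is to prove the three implications $(1) \Rightarrow (2) \Rightarrow (3) \Rightarrow (4)$ separately and then concatenate them. Since $A$ and $B$ are finite, each row has only finitely many entries, each a natural number, so both matrices automatically have finitely summing rows; together with the standing essentiality hypothesis this means all of the machinery built up in Sections \ref{s:cse}--\ref{s:smse} is available. Of the three implications, $(1)\Rightarrow(2)$ is a bookkeeping exercise that strings together the section results, $(2)\Rightarrow(3)$ is a triviality, and the only genuinely substantive step is $(3)\Rightarrow(4)$, which is Krieger's corollary.

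For $(1)\Rightarrow(2)$ I would simply chain the implications established earlier: if $A$ and $B$ are strong shift equivalent, then Corollary \ref{c:sse_implies_cse} makes them compatibly shift equivalent, Corollary \ref{c:cse-implies-rse} upgrades this to a representable shift equivalence, Proposition \ref{p:rse-implies-sme} promotes that to strong Morita shift equivalence of $X(A)_{\infty}$ and $X(B)_{\infty}$, and finally Proposition \ref{p:x(r)-implies-esdi} yields an equivariant $*$-isomorphism $\varphi: C^*(G_A)\otimes\bbK \to C^*(G_B)\otimes\bbK$ carrying $\mathcal{D}_{E_A}\otimes c_0$ onto $\mathcal{D}_{E_B}\otimes c_0$, which is exactly item $(2)$. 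The implication $(2)\Rightarrow(3)$ then requires nothing at all: an equivariant stable isomorphism that happens to respect the diagonals is, a fortiori, an equivariant stable isomorphism, so one merely discards the diagonal-preserving clause.

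For $(3)\Rightarrow(4)$ I would run the K-theoretic argument sketched before the statement. Starting from an equivariant stable isomorphism $\varphi: C^*(G_A)\otimes\bbK \to C^*(G_B)\otimes\bbK$, Proposition \ref{p:stequivariant-implies-stequivariant} produces a $*$-isomorphism $\varphi\rtimes\id : [C^*(G_A)\rtimes_{\gamma^A}\bbT]\otimes\bbK \to [C^*(G_B)\rtimes_{\gamma^B}\bbT]\otimes\bbK$ intertwining the stabilized dual actions $\widehat{\gamma}^A\otimes\id$ and $\widehat{\gamma}^B\otimes\id$. Applying $K_0$ and invoking its stability, together with the fact that $*$-isomorphisms induce order isomorphisms on $K_0$, gives an ordered-group isomorphism from $K_0(C^*(G_A)\rtimes_{\gamma^A}\bbT)$ onto $K_0(C^*(G_B)\rtimes_{\gamma^B}\bbT)$ that preserves positive cones and intertwines $K_0(\widehat{\gamma}^A_1)$ with $K_0(\widehat{\gamma}^B_1)$, hence their inverses as well. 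By the identification recorded before the statement (through \cite[Corollary 7.14]{Rae05} and \cite[Corollary 2.9]{Wag88}) these triples are isomorphic to the dimension group triples $(D_A,D_A^+,d_A)$ and $(D_B,D_B^+,d_B)$, so the latter are isomorphic. Krieger's theorem \cite{Kri80} then converts this isomorphism of dimension group triples into a shift equivalence of $A$ and $B$.

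The main obstacle is entirely confined to the last implication, and it is more a matter of care than of difficulty: one must faithfully transport both the order structure and the distinguished automorphism through the passage to crossed products and through the $K_0$ functor, and in particular match the automorphism $K_0(\widehat{\gamma}^A_1)^{-1}$ appearing in the K-theoretic triple with $d_A$. Once Proposition \ref{p:stequivariant-implies-stequivariant} and the dimension-group identification are in place, every individual step is standard, so I expect the proof to be short.
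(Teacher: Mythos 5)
Your proposal is correct, and for the implications $(2)\Rightarrow(3)$ and $(3)\Rightarrow(4)$ it coincides with the paper's argument: the paper likewise feeds Proposition \ref{p:stequivariant-implies-stequivariant} into $K_0$, identifies the resulting triple $(K_0(C^*(G_A)\rtimes_{\gamma^A}\bbT),K_0(C^*(G_A)\rtimes_{\gamma^A}\bbT)^+,K_0(\hga_1)^{-1})$ with $(D_A,D_A^+,d_A)$ via \cite[Corollary 7.14]{Rae05} and \cite[Corollary 2.9]{Wag88}, and closes with Krieger's theorem. Where you genuinely diverge is $(1)\Rightarrow(2)$. The paper disposes of this in one line by quoting Williams' theorem (strong shift equivalence of $A$ and $B$ is conjugacy of $(X_{E_A},\sigma_{E_A})$ and $(X_{E_B},\sigma_{E_B})$) together with the implication $(III)\Rightarrow(II)$ of \cite[Theorem 5.1]{CR17}, i.e.\ it imports an external characterization of conjugacy by equivariant diagonal-preserving stable isomorphism. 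You instead route through the paper's own new machinery: Corollary \ref{c:sse_implies_cse}, Corollary \ref{c:cse-implies-rse}, Proposition \ref{p:rse-implies-sme}, and Proposition \ref{p:x(r)-implies-esdi}, which is exactly the chain $(1)\Rightarrow(2)\Rightarrow(3)\Rightarrow(4)\Rightarrow(5)$ later used to prove Theorem \ref{t:main}. Your observation that finiteness plus entries in $\mathbb{N}$ supplies the ``finitely summing rows'' hypothesis needed for those intermediate results is the right thing to check, and there is no circularity since none of those results depends on this corollary. The trade-off: the paper's route is shorter and keeps the corollary logically independent of the new equivalence relations (it is really a statement about the classical picture, and the implication $(3)\Rightarrow(4)$ is the only part dubbed ``Krieger's corollary''), whereas your route is self-contained within the paper's framework but duplicates most of the content of Theorem \ref{t:main} inside the proof of a preliminary corollary.
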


\begin{proof}
By Williams' theorem \cite[Theorem 7.5.8]{Wil73} (cf. \cite{LM95}), strong shift equivalence of $A$ and $B$ coincides with conjugacy of two-sided edge shifts $(X_{E_A},\sigma_{E_A})$ and $(X_{E_B},\sigma_{E_B})$ (see \cite[Theorem 7.2.7]{LM95}), so we get that $(1)$ implies $(2)$ by the implication $(III) \implies(II)$ of \cite[Theorem 5.1]{CR17} (see also \cite[Proposition 2.17]{CK80}). 

Clearly $(2) \implies (3)$, so we are left with showing $(3) \implies (4)$. To do this, we use Proposition 
\ref{p:stequivariant-implies-stequivariant} to get that $[C^*(G)\rtimes_{\gamma^A} \bbT]\otimes\bbK$ and $[C^*(G') \rtimes_{\gamma^B}\bbT] \otimes\bbK$ are equivariantly isomorphic with actions $\widehat{\gamma}^A\otimes \id$ and $\widehat{\gamma}^B \otimes \id$ respectively. Hence, by applying K-theory we get an isomorphism of the triples 
\begin{gather*}
(K_0(C^*(G) \rtimes_{\gamma^A}\bbT),K_0(C^*(G)\rtimes_{\gamma^A}\bbT)^+,K_0(\widehat{\gamma}^A_1)) \ \text{ and } \\ (K_0(C^*(G')\rtimes_{\gamma^B} \bbT),K_0(C^*(G')\rtimes_{\gamma^B} \bbT)^+,K_0(\widehat{\gamma}^B_1)).
\end{gather*}
By Krieger's theorem \cite[Theorem 6.4]{Eff81} (see also \cite{Kri80}), and up to the identification with dimension triples, we get that the triples above are isomorphic if and only if $A$ and $B$ are shift equivalent. Hence, $(3) \implies (4)$.
\end{proof} 

From the perspective of C*-algebras, we get that compatible, representable, and strong Morita shift equivalences are between strong shift equivalence and shift equivalence. The work done in the previous sections allows us to orient them, and show that each one provides a new way of obstructing SSE when merely assuming SE. The equivalence between $(1)$ and $(4)$ in the theorem below realizes a goal sought after by Muhly, Pask and Tomforde in \cite[Remark 5.5]{MPT08}, and provides a characterization of strong shift equivalence of matrices in terms of strong Morita shift equivalence.

\begin{Thm} \label{t:main}
Suppose $A$ and $B$ are two finite essential matrices with entries in $\mathbb{N}$. Then the following are equivalent
\begin{enumerate}
\item $A$ and $B$ are strong shift equivalent.
\item $A$ and $B$ are compatibly shift equivalent.
\item $A$ and $B$ are representable shift equivalent.
\item $X(A)_{\infty}$ and $X(B)_{\infty}$ are strong Morita shift equivalent.
\item $C^*(G_A)$ and $C^*(G_B)$ are equivariantly stably isomorphic in a way that respects the diagonals.
\end{enumerate}
\end{Thm}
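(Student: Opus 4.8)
The plan is to prove the cyclic chain of implications $(1) \Rightarrow (2) \Rightarrow (3) \Rightarrow (4) \Rightarrow (5) \Rightarrow (1)$, of which the first four links have already been assembled in the preceding sections. Concretely, I would invoke Corollary \ref{c:sse_implies_cse} for $(1) \Rightarrow (2)$, Corollary \ref{c:cse-implies-rse} for $(2) \Rightarrow (3)$, Proposition \ref{p:rse-implies-sme} for $(3) \Rightarrow (4)$, and Proposition \ref{p:x(r)-implies-esdi} for $(4) \Rightarrow (5)$. Each of these applies verbatim under the standing hypotheses that $A$ and $B$ are finite, essential, and have finitely summing rows, so this portion of the argument is pure bookkeeping; all the substantive content of these steps lives in those earlier results.

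The only genuinely new step is the closing implication $(5) \Rightarrow (1)$, which ties the operator-algebraic invariant back to symbolic dynamics. Condition $(5)$ asserts exactly that there is an equivariant $*$-isomorphism $\varphi : C^*(G_A) \otimes \bbK \to C^*(G_B) \otimes \bbK$ with $\varphi(\mathcal{D}_{E_A} \otimes c_0) = \mathcal{D}_{E_B} \otimes c_0$. This is precisely the right-hand side of the Carlsen--Rout characterization \cite[Theorem 5.1]{CR17}, so I would apply the relevant direction of that theorem to conclude that the two-sided edge shifts $(X_{E_A}, \sigma_{E_A})$ and $(X_{E_B}, \sigma_{E_B})$ are conjugate. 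Williams' theorem \cite[Theorem 7.2.7]{LM95} then translates conjugacy of these edge shifts into strong shift equivalence of $A$ and $B$, which is condition $(1)$, closing the cycle.

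Before applying \cite[Theorem 5.1]{CR17} I must check that its hypotheses are met, since it is stated for finite graphs with no sources and no sinks. As $A$ is a finite matrix, $G_A$ is a finite graph; essentiality of $A$ (no zero rows and no zero columns) is exactly the statement that $G_A$ has no sinks and no sources by \cite[Proposition 8.8]{Rae05}, and finite out-degrees is automatic for a finite graph. The same applies to $B$. Thus the hypotheses of both \cite[Theorem 5.1]{CR17} and Williams' theorem are verified and the cited results apply directly.

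The main obstacle is conceptual rather than computational: nearly all of the mathematical difficulty has already been discharged in Sections \ref{s:cse}--\ref{s:smse}, so the delicate point here is to recognize that condition $(5)$ is \emph{literally} the diagonal-preserving equivariant stable isomorphism appearing in \cite[Theorem 5.1]{CR17}, and to line up the essentiality and finiteness hypotheses correctly so that the external theorems are applicable. Once these identifications are in place the five conditions are all equivalent.
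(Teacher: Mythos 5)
Your proposal is correct and matches the paper's own proof essentially verbatim: the same cyclic chain $(1)\Rightarrow(2)\Rightarrow(3)\Rightarrow(4)\Rightarrow(5)\Rightarrow(1)$ with the same citations of Corollary \ref{c:sse_implies_cse}, Corollary \ref{c:cse-implies-rse}, Proposition \ref{p:rse-implies-sme}, Proposition \ref{p:x(r)-implies-esdi}, and the closing step via \cite[Theorem 5.1]{CR17} together with Williams' theorem. Your explicit verification of the no-sources/no-sinks hypotheses is a small courtesy the paper leaves implicit, but the argument is the same.
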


\begin{proof}
It follows from Corollary \ref{c:sse_implies_cse} that $(1) \implies (2)$, from Corollary \ref{c:cse-implies-rse} that $(2) \implies (3)$, and from Proposition \ref{p:rse-implies-sme} that $(3) \implies (4)$. The implication $(4) \implies (5)$ is provided by Proposition \ref{p:x(r)-implies-esdi}. Finally, the implication $(5) \implies (1)$ is granted to us by combining the implication $(II) \implies (III)$ of \cite[Theorem 5.1]{CR17} and the fact that conjugacy of two-sided subshifts coincides with strong shift equivalence (Williams' Theorem \cite{Wil73}).
\end{proof}

We note here that if the construction in \cite[Section 5]{KK14} is applied to $E:= X(A)$, $F:= X(B)$ and $X := X(D)$ (as in the notation of \cite{KK14}), we obtain again the correspondences $E_{\infty}$, $F_{\infty}$ and $X_{\infty}$ as in \cite[Section 5]{KK14}. This follows from uniqueness of Pimsner dilations \cite[Theorem 3.9]{KK14}. In particular, the correspondence $X(R)_{\infty}$ coincides with ``$R_{\infty}$" in \cite[Section 5]{KK14} as the $1-2$ corner of $X_{\infty} = X(D)_{\infty}$. 

Hence, when we specify to $E:= X(A)$ and $F:= X(B)$, in the proof of \cite[Theorem 5.8]{KK14} it is erroneously claimed that $X(R)_{\infty}$ is the imprimitivity bimodule that implements a strong Morita equivalence between $X(A)_{\infty}$ and $X(B)_{\infty}$, through the identifications coming from $\psi_A$ and $\psi_B$. However, by our result this would show that shift equivalence implies strong shift equivalence. Hence, the strategy of proof in \cite[Theorem 5.8]{KK14} cannot be made to work, as the following cutoff result demonstrates.

\begin{Thm} \label{t:cutoff}
There exist finite aperiodic irreducible matrices $A$ and $B$ with entries in $\mathbb{N}$ that such that $X(A)_{\infty}$ and $X(B)_{\infty}$ are strong Morita equivalent, but not strong Morita shift equivalent.
\end{Thm}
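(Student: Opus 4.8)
The plan is to take $A$ and $B$ to be the celebrated counterexamples of Kim and Roush \cite{KR99}: finite, aperiodic, irreducible $7\times 7$ matrices with entries in $\mathbb{N}$ that are shift equivalent but not strong shift equivalent. With this choice, the two halves of the statement are established by entirely separate mechanisms, and essentially all of the work is outsourced to results already assembled above.

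For the negative half (failure of strong Morita \emph{shift} equivalence), I would invoke Theorem \ref{t:main}. Since $A$ and $B$ are finite essential matrices with finitely summing rows, the equivalence of conditions $(1)$ and $(4)$ in that theorem applies verbatim; as $A$ and $B$ are \emph{not} strong shift equivalent, it follows at once that $X(A)_{\infty}$ and $X(B)_{\infty}$ cannot be strong Morita shift equivalent. This is the whole point of identifying SMSE with SSE.

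For the positive half (ordinary strong Morita equivalence), I would proceed in two steps. First, because $A$ and $B$ are finite, aperiodic, irreducible, and shift equivalent, the work of Bratteli and Kishimoto \cite{BK00} yields that $C^*(G_A)$ and $C^*(G_B)$ are equivariantly stably isomorphic. Second, I would check that the hypotheses of Theorem \ref{t:down-and-up} hold for $X(A)$ and $X(B)$: finiteness of $V$ and $W$ makes the coefficient algebras $\sigma$-unital and gives finitely summing rows (so the left actions land in the compacts), while irreducibility forces no zero rows and no zero columns, so both correspondences are regular and full. Theorem \ref{t:down-and-up}, applied to $\O(X(A)) = C^*(G_A)$ and $\O(X(B)) = C^*(G_B)$, then converts the equivariant stable isomorphism into a strong Morita equivalence of the Pimsner dilations $X(A)_{\infty}$ and $X(B)_{\infty}$, completing the argument.

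Since the proof is built purely from cited inputs, the main delicate point is not a new calculation but the careful matching of hypotheses---in particular, confirming that the Bratteli--Kishimoto machinery delivers the isomorphism in the \emph{equivariant} (gauge-preserving) category rather than merely a plain stable isomorphism, as it is precisely this equivariance that Theorem \ref{t:down-and-up} consumes in order to produce the Pimsner-dilation Morita equivalence. One should also be careful to state clearly that the two conclusions concern genuinely different relations (the AEE/Muhly--Solel notion versus SMSE of Definition \ref{d:smse}), so that the theorem reads as a sharp cutoff rather than a contradiction.
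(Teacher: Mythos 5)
Your proposal is correct and follows essentially the same route as the paper: choose the Kim--Roush matrices, obtain strong Morita equivalence of $X(A)_{\infty}$ and $X(B)_{\infty}$ from Bratteli--Kishimoto via Theorem \ref{t:down-and-up}, and rule out strong Morita shift equivalence via the equivalence of SSE and SMSE in Theorem \ref{t:main}. Your additional verification of the hypotheses of Theorem \ref{t:down-and-up} (essentiality, finitely summing rows, $\sigma$-unitality) is a welcome elaboration of what the paper leaves implicit.
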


\begin{proof}
Let $A$ and $B$ be the aperiodic and irreducible counterexamples of Kim and Roush from \cite{KR99}, so that they are shift equivalent but not strong shift equivalent. By the result of Bratteli and Kishimoto \cite{BK00} we know that $C^*(G_A)$ and $C^*(G_B)$ are equivariantly stably isomorphic, so that by Theorem \ref{t:down-and-up} we get that $X(A)_{\infty}$ and $X(B)_{\infty}$ are strong Morita equivalent.

On the other hand, since $A$ and $B$ are not strong shift equivalent, by Theorem \ref{t:main} we get that $X(A)_{\infty}$ and $X(B)_{\infty}$ cannot be strong Morita shift equivalent.
\end{proof}

\begin{Rmk}
Suppose $A$ and $B$ are aperiodic and irreducible matrices over $\mathbb{N}$, and not $1 \times 1$. Then the following conditions are equivalent
\begin{enumerate}
\item $A$ and $B$ are shift equivalent.
\item $X(A)$ and $X(B)$ are shift equivalent.
\item $X(A)_{\infty}$ and $X(B)_{\infty}$ are strong Morita equivalent.
\item $C^*(G_A)$ and $C^*(G_B)$ are equivariantly stably isomorphic.
\item The triples $(D_A,D_A^+,d_A)$ and $(D_B,D_B^+, d_B)$ are isomorphic.
\end{enumerate}
Indeed, Krieger's theorem shows that $(1) \iff (5)$, Theorem \ref{p:shift-equiv-generalizes} shows that $(1) \iff (2)$, Theorem \ref{t:down-and-up} shows that $(3)\iff (4)$, and Corollary \ref{c:intermediates} shows that $(1) \implies (4)$. Finally, from Bratteli and Kishimoto \cite[Corollary 4.3]{BK00} we get $(5) \implies (4)$, which finishes the proof.

Note that the above proof avoids the implication $(2) \implies (3)$. Since now the validity of \cite[Theorem 5.8]{KK14} is in question, it is unknown whether one can prove $(2) \implies (3)$ directly, without classification techniques as in \cite{BK00}.
\end{Rmk}

\subsection*{Acknowledgments} The authors are grateful to Mike Boyle for bringing their attention to the work of Parry \cite{Par81}, as well as to Kevin Brix and Efren Ruiz for some remarks on earlier versions of this paper. The authors are also thankful for suggestions and remarks made by anonymous referees, ultimately leading to a more streamlined and readable version of the paper.


\end{document}